\numberwithin{equation}{section}
\theoremstyle{plain}
\newtheorem{Thm}[equation]{Theorem}
\newtheorem{Cor}[equation]{Corollary}
\newtheorem{Lem}[equation]{Lemma}
\newtheorem{Conj}[equation]{Conjecture}
\newtheorem {prob}{Problem}
\theoremstyle{definition}
\newtheorem{Def}[equation]{Definition}
\newtheorem{Exa}[equation]{Example}
\newtheorem{Rmk}[equation]{Remark}
\begin{document}

\title [A correspondence between rigid modules  and simple curves]
{A correspondence between rigid modules over path algebras and simple curves on Riemann surfaces}

\author[K.-H. Lee]{Kyu-Hwan Lee$^{\star}$}
\thanks{$^{\star}$This work was partially supported by a grant from the Simons Foundation (\#318706).}
\address{Department of
Mathematics, University of Connecticut, Storrs, CT 06269, U.S.A.}
\email{khlee@math.uconn.edu}

\author[K. Lee]{Kyungyong Lee$^{\dagger}$}
\thanks{$^{\dagger}$This work was partially supported by the University of Nebraska--Lincoln, Korea Institute for Advanced Study, AMS Centennial Fellowship, and NSA grant H98230-16-1-0059.}
\address{Department of Mathematics, University of Nebraska--Lincoln, Lincoln, NE 68588, U.S.A.,
and Korea Institute for Advanced Study, Seoul 02455, Republic of Korea}
\email{klee24@unl.edu; klee1@kias.re.kr}

\begin{abstract}
We propose a conjectural correspondence between the set of rigid indecomposable modules over the path algebras of acyclic quivers and the set of certain non-self-intersecting curves on Riemann surfaces, and prove the correspondence for the $2$-complete rank $3$ quivers.

\end{abstract}

\maketitle

\section{Introduction}

In the study of  the category of modules over a ring, geometric objects have often been used  to describe the structures. In particular, the following problem  has been considered fundamental. (For a small fraction of references, see \cite{AB, MSW, BZ11, ZZZ, CS, AI}.)

\begin{prob}
Let $R$ be a ring. Find a function $f$ from a (sub)set of $R$-modules to a set of geometric objects so that the size of the (asymptotic) ext group between two modules $M$ and $N$ can be measured by the intersections of $f(M)$ and $f(N)$.   
\end{prob}

The Homological Mirror Symmetry (HMS), proposed by Kontsevich  \cite{Kon}, is one of the phenomena which answer this problem. The existence of such symmetry implies that there is a symplectic manifold $S$ such that  the number of intersections between two Lagrangians on $S$ is closely related to the dimension of the ext group between the corresponding modules.

Pursuing this direction, in this paper, we restrict ourselves to the following problem.

\begin{prob} \label{prob-2}
Let $R$ be an hereditary algebra. Find a function $f$ from the set of indecomposable $R$-modules to a set of geometric objects so that the non-vanishing of the self-extension group of an indecomposable module $M$ is precisely detected by the existence of the self-intersection of $f(M)$.   
\end{prob}

Every finite-dimensional hereditary algebra over an algebraically closed field is Morita equivalent to the path algebra of an acyclic quiver, i.e., a quiver without oriented cycles (See, e.g., \cite{ASiSk}).  The number of vertices of a quiver is referred to as the rank of the quiver. The dimension vectors of indecomposable modules over a path algebra are called (positive) roots.  A root $\alpha$ is real if the Euler inner product $\langle\alpha,\alpha\rangle$ is equal to 1, and imaginary if $\langle\alpha,\alpha\rangle\leq 0$.

We first consider the case that  $R$ is the path algebra of a 2-complete quiver (that is, an acyclic quiver with at least two arrows between every pair of vertices), and define a bijective function
 $$f:\{\text{indecomposable  modules corresponding to positive real roots}\}\longrightarrow \{\text{admissible curves}\},   
$$
where admissible curves are certain paths on a Riemann surface (see Definition \ref{def-adm}). Then we formulate the following conjecture:
\begin{Conj} \label{conj-int}  
For an indecomposable $R$-module $M$, we have  $Ext^1(M,M)=0$   if and only if $f(M)$ has no self-intersections.  
\end{Conj}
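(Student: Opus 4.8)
The plan is to convert the homological condition into a combinatorial one and then read it off from the geometry of the curve. For the path algebra $R$ of an acyclic quiver one has the Euler-form identity $\langle \dim M, \dim N\rangle = \dim \operatorname{Hom}(M,N) - \dim \operatorname{Ext}^1(M,N)$, and since $f$ is defined only on the indecomposable $M$ whose dimension vector $\alpha$ is a positive real root, we have $\langle\alpha,\alpha\rangle=1$, so $\dim\operatorname{End}(M)-\dim\operatorname{Ext}^1(M,M)=1$. By Kac's theorem such an $M=M_\alpha$ is unique, and $\operatorname{Ext}^1(M_\alpha,M_\alpha)=0$ holds precisely when $\operatorname{End}(M_\alpha)$ is one-dimensional, i.e. when $M_\alpha$ is a brick, i.e. when $\alpha$ is a \emph{real Schur root}. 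Thus Conjecture \ref{conj-int} is equivalent to the assertion that $f(M_\alpha)$ is self-intersecting if and only if $\alpha$ fails to be a real Schur root, and the task becomes matching a purely geometric invariant (minimal self-intersection number) against a purely combinatorial one (the Schur property).

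Next I would build everything inductively from the Weyl-group action. The simple roots $\alpha_i$ are real Schur roots, the simple modules $S_i$ are bricks, and by the construction of the admissible curves their images $f(S_i)$ are embedded arcs with no self-intersection; this is the base case. Every positive real root is obtained from a simple root by a sequence of reflections $s_{i_1}\cdots s_{i_k}$, and on the module side each admissible reflection is realized by a Bernstein--Gelfand--Ponomarev reflection functor. The key structural input I would establish is a geometric counterpart: a lemma asserting that applying $s_i$ to an admissible curve is implemented by an explicit surgery on the Riemann surface (a concatenation, or partial twist, around the arc attached to vertex $i$) that is compatible with $f$.

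With this dictionary in place, the heart of the proof is a single inductive step: showing that the reflection $s_i$ introduces a new self-intersection in $f(M_\alpha)$ exactly when it destroys the brick property of $M_\alpha$. The controlling quantity is the pairing of $\alpha$ with $\alpha_i$, equivalently the number $\dim\operatorname{Hom}(S_i,M_\alpha)$ or $\dim\operatorname{Ext}^1(S_i,M_\alpha)$ recorded by the reflection functor. Geometrically this counts how many times the relevant strand of $f(M_\alpha)$ meets the arc at $i$; algebraically it detects whether the reflected module acquires extra endomorphisms, hence self-extensions. I would package this as a formula expressing the minimal self-intersection number of $f(M_\alpha)$ as a sum of local crossing contributions along a reduced word, and then show the sum vanishes iff every intermediate root remains Schur, which for the $2$-complete rank $3$ quivers can be checked directly from the explicit list of real roots.

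The main obstacle I anticipate is precisely this last matching: controlling the \emph{minimal} (geodesic) self-intersection number rather than the crossing count of some chosen representative, and proving that the surgery for $s_i$ creates a crossing in lockstep with the algebraic loss of the brick property. Homotopic curves can have wildly different crossing numbers, so one must fix geodesic representatives for a hyperbolic structure on $S$ (or an equivalent taut normal form) and verify that the reflection surgery never produces a removable bigon. Here the $2$-completeness, i.e. at least two arrows between each pair of vertices, should be decisive: it forces $S$ to be hyperbolic and the arcs attached to the vertices to \emph{fill} the surface sufficiently that no bigon can form, which is what ultimately pins the self-intersection number to the homological defect $\dim\operatorname{End}(M_\alpha)-1$.
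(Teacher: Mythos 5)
Your opening reduction is fine: for a positive real root $\alpha$ the indecomposable $M_\alpha$ is unique (Kac), and $\dim\operatorname{End}(M_\alpha)-\dim\operatorname{Ext}^1(M_\alpha,M_\alpha)=\langle\alpha,\alpha\rangle=1$ gives rigid $\Leftrightarrow$ brick $\Leftrightarrow$ $\alpha$ is a real Schur root; the paper uses the same translation. The gap is everything after that. Your central inductive step --- ``the reflection $s_i$ introduces a new self-intersection in $f(M_\alpha)$ exactly when it destroys the brick property'' --- is not a lemma supported by the tools you name; it is the conjecture itself, restated one letter at a time. The module-theoretic mechanism you invoke does not exist in the required generality: BGP reflection functors are defined only at sinks and sources, whereas the words $\upsilon(\eta)$ arising here (e.g.\ $(321)^42321232321232(123)^4$ in Example \ref{ex-main}) are not sequences of sink/source reflections; and when a reflection functor \emph{is} available (at a sink or source), it is an equivalence on the relevant subcategories and therefore \emph{preserves} rigidity and the brick property --- so the dichotomy your induction pivots on is vacuous exactly in the only case where the functor exists. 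Relatedly, the set of real Schur roots is not stable under simple reflections, so ``every intermediate root remains Schur'' is not a condition one can propagate along an arbitrary word; and your final claim that Schur-ness ``can be checked directly from the explicit list of real roots'' is circular, because producing that list for the $2$-complete rank $3$ quivers is precisely the content of the theorem.

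The paper's actual proof never touches modules or reflection functors. It invokes the Caldero--Keller theorem (Theorem \ref{thm-CK}) identifying real Schur roots with denominator vectors of non-initial cluster variables, and then proves by a long induction on mutation sequences (Theorem \ref{thm-mm}, proved in Section \ref{sec-proof}) that the denominator vector attached to a word $\hat{\mathfrak w}=\mathfrak w\mathfrak v$ equals $\psi(\hat{\mathfrak w})=s_{\mathfrak w}\phi(\mathfrak v)$, where the longest acyclic prefix $\mathfrak w$ encodes the two spirals and $\mathfrak v$ encodes a line segment of slope $(b,c)$ with $\gcd(|b|,|c|)=1$. On the geometric side there is no appeal to geodesic representatives or bigon removal: the non-self-intersecting admissible curves are classified outright (via Dehn twists) as the curves $\eta_z$, $z\in\mathcal Z$ --- two symmetric spirals joined by a line segment --- and the Farey/Stern--Brocot correspondence matches these curves bijectively with the words $\hat{\mathfrak w}$. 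So the obstacle you flag as the hardest part (pinning the minimal self-intersection number to the homological defect) is never confronted at all; it is bypassed by an explicit computation of cluster denominators. To salvage your outline you would need a genuinely new argument for the inductive step, and it would have to rest on something finer than reflection functors.
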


In this paper, we prove this conjecture  for 2-complete rank 3 quivers. 
When $Ext^1(M,M)=0$, the module $M$ is called rigid, and the dimension vector of a rigid  indecomposable module is called a real Schur root. To explain our result, we let 
$$
\mathcal{Z}:=\{(a,b,c)\in\mathbb{Z}^3\ :  \ \text{gcd}(|b|,|c|)=1\}.
$$
For each $z=(a,b,c)\in \mathcal{Z}$, define a curve $\eta_z$ on the universal cover of a triangulated torus, consisting of two symmetric spirals and a line segment, so that $a$ determines the number of times the spirals revolve and $(b,c)$ determines the slope of the line segment. See Examples \ref{ex-main} (2). The curves $\eta_z$, $z \in \mathcal Z$, have no self-intersections. Now our result (Theorem \ref{thm-main}) is the following:

\begin{Thm}
Let $R$ be the path algebra of a 2-complete rank 3 quiver. Then 
there is a natural bijection between the set of rigid indecomposable  modules and the sef of curves $\eta_z$, $z \in \mathcal Z$.
\end{Thm}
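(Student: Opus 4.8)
The plan is to deduce the theorem from the bijection $f$ together with the rank-$3$ case of Conjecture \ref{conj-int}. First I would record the algebraic reduction: since $R$ is hereditary and $\alpha$ is a real root, the unique indecomposable $M_\alpha$ with dimension vector $\alpha$ satisfies $\dim\operatorname{End}(M_\alpha)-\dim\operatorname{Ext}^1(M_\alpha,M_\alpha)=\langle\alpha,\alpha\rangle=1$, so $M_\alpha$ is rigid if and only if it is a brick, i.e. if and only if $\alpha$ is a real Schur root. Hence the rigid indecomposables are exactly the $M_\alpha$ with $\alpha$ a real Schur root, and they sit inside the domain of $f$. The theorem then splits into two claims: (I) the rank-$3$ case of Conjecture \ref{conj-int}, which guarantees that $f$ restricts to a bijection from the rigid indecomposables onto the admissible curves (Definition \ref{def-adm}) having no self-intersection; and (II) the purely geometric classification that these self-intersection-free admissible curves are precisely the curves $\eta_z$, $z\in\mathcal Z$. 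Granting (I) and (II), composing $f$ with the identification $\eta_z\leftrightarrow z$ yields the asserted natural bijection between rigid indecomposables and $\mathcal Z$.

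For (II) I would analyze a general admissible curve in the universal cover of the triangulated torus and count its self-intersections directly. Such a curve consists of two spiraling ends joined by a middle segment, and its combinatorial type is recorded by the winding data of the spirals together with the homology class, or slope, of the segment. I would show that a self-intersection is forced whenever the slope $(b,c)$ fails to be primitive, and that once $\gcd(|b|,|c|)=1$ the normalization forces the curve to coincide with $\eta_{(a,b,c)}$, where $a$ is read off from the winding number of the spirals. Distinctness of the $\eta_z$ follows because the triple $(a,b,c)$ is recovered from the slope and winding number, which establishes the bijection with $\mathcal Z$ and, in particular, re-derives that each $\eta_z$ is simple.

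The core is (I), which I would approach via the Weyl group action and BGP reflection functors. Every real root is obtained from a simple root by a sequence of reflections, and I would track two quantities simultaneously along such a sequence: the self-extension dimension $\dim\operatorname{Ext}^1(M_\alpha,M_\alpha)=\dim\operatorname{End}(M_\alpha)-1$ on the algebraic side, and the self-intersection number of $f(M_\alpha)$ on the geometric side. The plan is to realize each reflection as an explicit transformation of the Riemann surface carrying $f(M_\alpha)$ to $f$ of the reflected module, and to prove that this transformation changes the self-intersection count by exactly the amount predicted by the change of the Euler form. With the base case given by the simple roots, whose modules have $\operatorname{End}=k$ and whose curves are manifestly simple, an induction on the number of reflections would then yield $\operatorname{Ext}^1(M,M)=0$ if and only if $f(M)$ is simple.

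I expect the main obstacle to be the geometric self-intersection count and its synchronization with the representation theory. For $2$-complete rank $3$ quivers there are infinitely many real Schur roots, arranged in an intricate, continued-fraction-like pattern, and the spiral ends accumulate, so literally counting crossings and showing they equal $\dim\operatorname{Ext}^1(M,M)$ under every reflection is delicate. The crux is verifying that the surface homeomorphism induced by a reflection interacts correctly with the spiraling ends, creating or destroying no crossings beyond those accounted for by the change in $\langle\alpha,\alpha\rangle$; this bookkeeping near the spirals is where I anticipate most of the technical effort to concentrate.
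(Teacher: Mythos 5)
Your reduction of the theorem to the two claims (I) and (II) is consistent with the paper's logic, and your sketch of (II) matches what the paper does (it disposes of (II) in one sentence: $\Gamma=\{\eta_z : z\in\mathcal Z\}$, via Dehn twists). The genuine gap is in your proof of (I), and it is fatal as described. First, the inductive step does not exist: for a general simple reflection $s_i$ there is no functor relating $M_\alpha$ to $M_{s_i\alpha}$ and no formula relating their self-extensions. The set of real Schur roots of a \emph{fixed} quiver $\mathcal Q$ is not closed under simple reflections --- in the paper's Example \ref{ex-main}(1), $\alpha_1$ and $s_3\alpha_1$ are Schur while $s_2s_3\alpha_1$ is not (its curve self-intersects) --- and determining \emph{which} reflection words preserve rigidity is exactly the content of the theorem, so an induction over all reflection sequences is circular. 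Second, the bookkeeping principle you propose is vacuous: $\langle\alpha,\alpha\rangle=1$ for \emph{every} positive real root, so ``the amount predicted by the change of the Euler form'' is identically zero, yet the self-intersection number certainly changes along reflection sequences; no Euler-form accounting can distinguish the crossing-creating reflections from the harmless ones. Third, the functors that do preserve $\mathrm{Ext}^1$ --- the BGP reflection functors --- exist only at sinks and sources and produce modules over the \emph{reflected} quiver: for instance $\sigma_2^+\sigma_3^+S_1$ is rigid with dimension vector $s_2s_3\alpha_1$, but over $\sigma_2\sigma_3\mathcal Q$, not over $\mathcal Q$; this is precisely why $s_2s_3\alpha_1$ can fail to be Schur for $\mathcal Q$ itself, and why rigidity is not transported the way your induction requires.

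Even the natural repair --- restricting the induction to sink/source reflections, synchronized with re-labelings of the surface --- cannot reach the theorem. Sink/source mutation words are exactly the words in $\mathcal C_1\cup\mathcal C_3$ (the paper notes $B(\mathfrak w)$ is acyclic iff $\mathfrak w\in\mathcal C_1\cup\mathcal C_3\cup\{\emptyset\}$), so the BGP orbit of the simples yields, as dimension vectors over $\mathcal Q$, only the roots $s_{\mathfrak w}\alpha_j$ with $\mathfrak w\in\mathcal C_1\cup\mathcal C_3\cup\{\emptyset\}$ --- in the curve picture, the pure spirals (Cases 1 and 2 of the paper's proof). Since 2-complete rank 3 quivers are wild, most rigid indecomposables are regular exceptional modules lying outside this orbit; e.g.\ the root $1662490\,\alpha_1+4352663\,\alpha_2+11395212\,\alpha_3$ of Example \ref{exa-sub}, with word $(321)^42132$ and a genuine line-segment part $\mathfrak v=2132$, is never reached by your induction. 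The paper evades both obstacles at once by trading reflections for cluster mutations: by Caldero--Keller (Theorem \ref{thm-CK}) every real Schur root is the denominator vector of a cluster variable, every cluster variable is reachable by some mutation word $\hat{\mathfrak w}$ (mutations through cyclic quivers are allowed, unlike BGP reflections), and the technical core (Theorem \ref{thm-mm}, a seven-case induction on mutation words) computes that denominator as $\psi(\hat{\mathfrak w})=s_{\mathfrak w}\phi(\mathfrak v)$, whose spiral-plus-segment structure is then matched to $\eta_z$, with $\mathfrak w$ giving $a$ and $\vec{v_p}(\mathfrak v)$ giving the primitive vector $(b,c)$ via the Stern--Brocot tree. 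If you want to keep an inductive scheme, the induction parameter must be mutation words (or mutations of exceptional sequences), not reflection words.
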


This shows that real Schur roots  are very special ones among all real roots in general.   
 Our proof  is achieved by expressing each  real Schur root in terms of a sequence of simple reflections that corresponds to a non-self-intersecting path.\footnote{After the first version of this paper was posted on the arXiv, Felikson and Tumarkin \cite{FT} proved Conjecture \ref{conj-int}  for all 
2-complete quivers. Moreover they characterized $c$-vectors in the same seed, using a collection of pairwise non-crossing admissible curves satisfying a certain word property.} See Example \ref{exa-sub}.

For the general case, let $R$ be the path alagebra of any acyclic quiver. We still define an onto function
$$
g: \{\text{admissible curves}\} \longrightarrow \{\text{indecomposable modules corresponding to positive real roots}\},
$$ and propose the following (See Conjecture \ref{main_conj}):

\begin{Conj}
For an indecomposable $R$-module $M$, we have $Ext^1(M,M)=0$ if and only if $g^{-1}(M)$ contains a non-self-crossing curve.
\end{Conj}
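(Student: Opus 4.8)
The plan is to transport the statement into the combinatorics of reflection words and then to invoke the representation-theoretic description of real Schur roots. A positive real root $\beta$ determines a unique indecomposable $M_\beta$, and $\langle\beta,\beta\rangle=1$ gives $\dim\mathrm{End}(M_\beta)-\dim\mathrm{Ext}^1(M_\beta,M_\beta)=1$; hence $\mathrm{Ext}^1(M_\beta,M_\beta)=0$ is equivalent to $M_\beta$ being a brick, i.e. to $\beta$ being a real Schur root. So the conjecture is equivalent to the assertion that a positive real root $\beta$ is a real Schur root if and only if some admissible curve $\eta$ with $g(\eta)=M_\beta$ has no self-crossing. The function $g$ is defined by lifting an admissible curve to the universal cover of the triangulated surface and reading off the sequence of edges it crosses; this produces a word $s_{i_1}\cdots s_{i_{k-1}}$ and a terminal index $i_k$ with $g(\eta)=M_\beta$ for $\beta=s_{i_1}\cdots s_{i_{k-1}}(\alpha_{i_k})$. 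My first task is therefore to make this reading map explicit for an arbitrary acyclic quiver and to identify exactly which words occur.

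The technical core is a dictionary between the geometry of $\eta$ and the positivity behaviour of its word. Writing $\beta_j=s_{i_{j+1}}\cdots s_{i_{k-1}}(\alpha_{i_k})$ for the partial roots obtained by peeling reflections off the right, I would prove that $\eta$ has no self-crossing if and only if every $\beta_j$ is a positive root and the reflections are applied without backtracking, so that the word is a reduced reflection path realizing $\beta=\beta_0$. Geometrically, a transversal self-intersection of $\eta$ in the universal cover should correspond precisely to a stage where a partial product carries a positive root to a negative one and later back, while a positive word can be drawn as a monotone path crossing each relevant edge once. Proving this equivalence uniformly is the step where the surface geometry of the higher-rank and non-$2$-complete cases must be handled, since the universal cover is no longer the single flat triangulated torus underlying Theorem \ref{thm-main}.

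Granting the dictionary, both implications reduce to the reflection-functor description of exceptional modules: $\beta$ is a real Schur root exactly when $M_\beta$ is reachable from a simple module by a sequence of BGP reflection functors, equivalently when $\beta$ admits a reflection expression $s_{i_1}\cdots s_{i_{k-1}}(\alpha_{i_k})$ whose intermediate roots $\beta_j$ are all positive. For ($\Leftarrow$), a non-self-crossing $\eta\in g^{-1}(M_\beta)$ yields such a positive expression by the dictionary, so $\beta$ is a real Schur root and $M_\beta$ is rigid. For ($\Rightarrow$), if $M_\beta$ is rigid then $\beta$ is a real Schur root and has a positive reflection expression; I would then draw an admissible curve realizing that specific word as a monotone path, verify that it lands in $g^{-1}(M_\beta)$, and check that monotonicity rules out self-crossings, exploiting the freedom that $g$ is many-to-one so that only one good curve in the fibre is needed.

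I expect the construction in ($\Rightarrow$) --- converting a positive reflection word into a genuine non-self-crossing admissible curve on the general surface --- to be the main obstacle, in tandem with proving in the dictionary that self-intersections are detected exactly by positivity failures. In the rank-$3$, $2$-complete setting these are pinned down by the explicit spiral-and-segment curves $\eta_z$ and the homogeneity of the flat torus; for a general acyclic quiver one loses that homogeneity, the number of edge crossings is no longer governed by a single slope, and many distinct words can realize the same root, so the argument must produce at least one admissible drawing of a good word that avoids all crossings. Verifying that the edge-reading map is compatible with reflection-functor combinatorics uniformly across all orientations, and controlling the regions of the surface where admissible curves are forced to self-cross, are the remaining points that would demand the most care.
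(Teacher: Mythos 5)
Your proposal hinges on a dictionary asserting that a non-self-crossing admissible curve corresponds to a reflection expression $\beta=s_{i_1}\cdots s_{i_{k-1}}(\alpha_{i_k})$ whose partial roots $\beta_j=s_{i_{j+1}}\cdots s_{i_{k-1}}(\alpha_{i_k})$ are all positive, and that such positivity characterizes real Schur roots via reflection functors. This is false, and fatally so, because positivity of the partial roots is automatic rather than selective: for \emph{any} positive real root $\beta$, choosing $w$ of minimal length with $\beta=w\alpha_{i}$ (so $\ell(ws_{i})=\ell(w)+1$) makes every partial root along a reduced word for $w$ positive, so your criterion would declare every positive real root Schur. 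Example \ref{ex-main}(1) of the paper is a concrete counterexample: for the $2$-complete rank-$3$ quiver, $\beta=s_2s_3\alpha_1=\alpha_1+6\alpha_2+2\alpha_3$ has partial roots $\alpha_1$, $s_3\alpha_1=\alpha_1+2\alpha_3$, and $\beta$ itself, all positive, yet $\beta$ is \emph{not} a real Schur root and its essentially unique admissible curve (word $23132$) self-crosses. Hence self-intersections are not detected by sign failures of partial roots. Your appeal to BGP reflection functors has a parallel defect: those functors apply only at sinks or sources, and the modules reachable from simples this way are preprojective or preinjective, whereas wild quivers possess regular exceptional (rigid) modules not of this form; this is exactly why the characterizations of real Schur roots cited in the paper (Schofield's subrepresentation criterion, Crawley-Boevey's braid action on exceptional sequences, Hubery--Krause) are recursive rather than simple word-positivity conditions.

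It is also worth stressing that the statement you are proving is Conjecture \ref{main_conj}, which the paper does not prove in general, and that where the paper does prove it the mechanism differs entirely from your plan. For the equioriented types $A$ and $D$, rank $2$, and $A_2^{(1)}$, every positive real root is Schur, so only one direction has content and the paper exhibits explicit non-crossing curves root by root. The substantial case, $2$-complete rank $3$, is settled through cluster algebras: by Caldero--Keller (Theorem \ref{thm-CK}) the real Schur roots are precisely the denominator vectors of non-initial cluster variables, and Theorem \ref{thm-mm} computes those denominators by a long induction on mutation sequences (the inequalities of Lemmas \ref{lem-ineq}, \ref{ppa}, \ref{pqa}, \ref{lem-wwaa}, \ref{lem-wwab}, \ref{lem-fin}), yielding $\beta_p(\hat{\mathfrak w})=\psi(\hat{\mathfrak w})=s_{\mathfrak w}\,\phi(\mathfrak v)$ where $\mathfrak w$ is the longest acyclic prefix. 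The word property singling out non-crossing curves is thus the spiral--segment--spiral normal form $\eta_z$ (the acyclic prefix $\mathfrak w$ producing the two spirals, a Farey/Stern--Brocot vector $\langle b,c\rangle$ with $\gcd(|b|,|c|)=1$ producing the middle segment), not positivity of intermediate roots; any repair of your approach would have to replace the positivity invariant by something of this finer, genuinely word-structural kind, and no uniform such invariant is currently known beyond the $2$-complete case.
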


  As tests for known cases, we prove this conjecture for equioriented quivers of types $A$ and $D$, and for $A_2^{(1)}$ and all rank $2$ quivers. We also consider the highest root of a quiver of type $E_8$ and provide such a path. 
 If this conjecture holds true, then it gives an elementary geometric (and less recursive) criterion to distinguish real Schur roots among all positive real roots. 
 
 There have been a number of known criteria to tell whether a given real root is a real Schur root, some of which are in terms of subrepresentations (due to Schofield \cite{S}), braid group actions (due to Crawley-Boevey \cite{C-B}),  cluster variables (due to Caldero and Keller \cite{CK}), or $c$-vectors (due to Chavez \cite{Ch}).  Building on a result of Igusa--Schiffler \cite{IS} and Baumeister--Dyer--Stump--Wegener \cite{BDSW}, Hubery and Krause  \cite{HK} characterized real Schur roots in terms of non-crossing partitions. There are also combinatorial descriptions for $c$-vectors in the same seed due to Speyer--Thomas \cite{ST} and Seven \cite{Se}. However none of these is of geometric nature, and most of them rely on heavy recursive procedures which are hard to apply in practice.

Also a better description for real Schur roots is still needed to  help understand  a base step of the non-commutative HMS for path algebras. Note that the recent work of Shende--Treumann--Williams--Zaslow \cite{STWZ,STW,TWZ} suggests HMS for certain (not-necessarily acyclic) quivers including the ones coming from bicolored graphs on surfaces.

Our conjecture suggests  the existence of the HMS phenomenon for the path algebra over an arbitrary acyclic quiver. In a subsequent project, we plan to investigate  the HMS for path algebras over various quivers.

\subsection*{Acknowledgments} We thank Cheol-Hyun Cho, Christof Geiss, Ralf Schiffler, Hugh Thomas, Pavel Tumarkin,  Jerzy Weyman, and Nathan Williams  for helpful discussions. We also thank an anonymous referee for letting us know of \cite{C-B}.  K.-H. L. gratefully acknowledges support from the Simons Center for Geometry and Physics at which some of the research for this paper was performed.

\vskip 1cm

\section{A Conjectural Correspondence} \label{Schur}

\subsection{The statement of conjecture}\label{conj_subsection} Let $\mathcal{Q}$ be an acyclic (connected) quiver with $N$ vertices labeled by $I:=\{1,...,N\}$. Denote by $S_N$ the permutation group on $I$. 
Let $P_{\mathcal{Q}}\subset S_N$ be the set of all permutations $\sigma$ such that 
there is no arrow  from $\sigma(j)$ to $\sigma(i)$  for any $j>i$ on $\mathcal Q$.
Note that if there exists an oriented path passing through all $N$ vertices on $\mathcal Q$, in particular, if there is at least one arrow between every pair of vertices, then $P_{\mathcal{Q}}$ consists of a unique permutation. 

For each $\sigma\in P_{\mathcal{Q}}$, we define a labeled Riemann surface  $\Sigma_{\sigma}$\footnote{The punctured discs appeared in Bessis' work \cite{Be}.  For better visualization, here we prefer to use an alternative description using compact Riemann surfaces with one or two marked points.} as follows.
Let $G_1$ and $G_2$ be two identical copies of a regular $N$-gon. Label the edges of each of the two  $N$-gons
 by $T_{\sigma(1)}, T_{\sigma(2)}, \dots , T_{\sigma(N)}$ counter-clockwise. On $G_i$, let $L_i$ be the line segment from the center of $G_i$ to the common endpoint of  $T_{\sigma(N)}$ and $T_{\sigma(1)}$.    Fix the orientation of every edge of $G_1$ (resp.  $G_2$) to be 
 counter-clockwise (resp. clockwise) as in the following picture. 
 \begin{center}
 \begin{tikzpicture}[scale=0.5]
\node at (2.4,-2.2){\tiny{$\sigma(N)$}};
\node at (1.5,2.6){\tiny{$\sigma(2)$}};
\node at (3.8,0){\tiny{$\sigma(1)$}};
\node at (-2.0,-2.7){\tiny{$\sigma(N-1)$}};
\node at (-2.0,2.5){\tiny{$\sigma(3)$}};
\node at (-3.2,0){\vdots};
\draw (0,0) +(30:3cm) -- +(90:3cm) -- +(150:3cm) -- +(210:3cm) --
+(270:3cm) -- +(330:3cm) -- cycle;
\draw [thick] (2.4,-0.2) -- (2.6,0)--(2.8,-0.2);
\draw [thick] (1.4,1.95) -- (1.3,2.25)--(1.6,2.25);  
\draw [thick] (-1.0,2.2) -- (-1.3,2.2)--(-1.2,2.5);  
\draw [thick] (-2.4,0.2) -- (-2.6,0)--(-2.8,0.2);   
\draw [thick] (1.0,-2.2) -- (1.3,-2.2)--(1.2,-2.5);  
\draw [thick] (-1.4,-1.95) -- (-1.3,-2.25)--(-1.6,-2.25);  
\draw [thick] (0,0)--(2.6,-1.5);  
\node at (1.3,-1.1){\tiny{$L_1$}};
\end{tikzpicture}
\begin{tikzpicture}[scale=0.5]
\node at (-1.3,1.2){\tiny{$L_2$}};
\draw [thick] (0,0)--(-2.6,1.5);  
\node at (2.4,-2.2){\tiny{$\sigma(3)$}};
\node at (1.5,2.9){\tiny{$\sigma(N-1)$}};
\node at (3.3,0){\vdots};
\node at (-2.0,-2.7){\tiny{$\sigma(2)$}};
\node at (-2.0,2.5){\tiny{$\sigma(N)$}};
\draw (0,0) +(30:3cm) -- +(90:3cm) -- +(150:3cm) -- +(210:3cm) --
+(270:3cm) -- +(330:3cm) -- cycle;
\draw [thick] (-2.4,-0.2) -- (-2.6,0)--(-2.8,-0.2);
\draw [thick] (-1.4,1.95) -- (-1.3,2.25)--(-1.6,2.25);  
\draw [thick] (1.0,2.2) -- (1.3,2.2)--(1.2,2.5);  
\draw [thick] (2.4,0.2) -- (2.6,0)--(2.8,0.2);   
\draw [thick] (-1.0,-2.2) -- (-1.3,-2.2)--(-1.2,-2.5);  
\draw [thick] (1.4,-1.95) -- (1.3,-2.25)--(1.6,-2.25);  
\end{tikzpicture}
 \end{center}

 Let $\Sigma_{\sigma}$ be the (compact) Riemann surface of genus $\lfloor \frac{N-1}{2}\rfloor$
obtained by gluing together the two $N$-gons with all the edges of the same label identified according 
to their orientations.  The edges of the $N$-gons become $N$ different curves in $\Sigma_\sigma$. If $N$ is odd, all the vertices of the two $N$-gons 
are identified to become one point in $\Sigma_\sigma$ and the curves obtained from the edges are loops. If $N$ is even, two distinct
 vertices are shared by all curves. Let $\mathcal{T}$ be the set of all curves, i.e., $\mathcal{T}={T}_1\cup\cdots{T}_N\subset \Sigma_\sigma$, and $V$ be the set of the vertex (or vertices) on $\mathcal{T}$.  

Consider the Cartan matrix  corresponding  to  $\mathcal{Q}$  and the root system of the associated Kac--Moody algebra. The simple root corresponding to $i \in I$ will be denoted by $\alpha_i$.    The simple reflections of the Weyl group $W$ will be denoted by $s_i$, $i \in I$.  Let $\mathfrak W$ be the set of words $\mathfrak w=i_1i_2 \cdots i_k$ from the alphabet $I$ such that no two consecutive letters $i_p$ and $i_{p+1}$ are the same.   For each element $w\in W$, let $R_w\subset \mathfrak W$ be the set of words $i_1i_2 \cdots i_k$ such that $w=s_{i_1}s_{i_2} \cdots s_{i_k}$. Recall that the set of positive real roots and the set of reflections in $W$ are in one-to-one correspondence. Also note that if there are at least two arrows between every pair of vertices on $\mathcal{Q}$, then $R_w$ contains a unique element.  Define $$
R:=\bigcup_{\tiny{\begin{array}{c}w\in W\\w:\text{reflection}\end{array}}}R_w\subset \mathfrak W.
$$  

\begin{Def} \label{def-adm} Let $\sigma\in P_{\mathcal{Q}}$.
A $\sigma$-\emph{admissible} curve is a continuous function $\eta:[0,1]\longrightarrow \Sigma_{\sigma}$ such that

1) $\eta(x)\in V$ if and only if  $x\in\{0,1\}$;

2) there exists $\epsilon>0$ such that $\eta([0,\epsilon])\subset L_1$ and $\eta([1-\epsilon,1])\subset L_2$;

3) if $\eta(x)\in \mathcal{T}\setminus V$ then $\eta([x-\epsilon,x+\epsilon])$ meets $\mathcal{T}$ transversally for sufficiently small $\epsilon>0$;

4) and   $\upsilon(\eta)\in R$, where $\upsilon(\eta):={i_1}\cdots {i_k}$ is given by 
$$\{x\in(0,1) \ : \ \eta(x)\in \mathcal{T}\}=\{x_1<\cdots<x_k\}\quad \text{ and }\quad \eta(x_\ell)\in T_{i_\ell}\text{ for }\ell\in\{1,...,k\}.$$ 

If $\sigma$ is clear from the context, a $\sigma$-admissible curve will be just called an admissible curve.
\end{Def}

Note that for every $\mathfrak w\in R$, there is a $\sigma$-admissible curve $\eta$ 	with $\upsilon(\eta)=\mathfrak{w}$. In particular, every positive real root can be represented by some admissible curve(s).

\begin{Exa} \label{ex-main} 
Let $N=3$, and $\mathcal{Q}$ be the rank 3 acyclic quiver with double arrows between every pair of vertices as follows. 
 \begin{center}
 \begin{tikzpicture}[scale=0.5]
 \node at (0,0){\tiny{$1$}};
\node at (2.8,0){\tiny{$3$}};
\node at (1.4,1.5){\tiny{$2$}};
\draw [->, thick] (1.6,1.3)--(2.5,0.2);
\draw [->, thick] (1.7,1.4)--(2.6,0.3);
\draw [->, thick] (0.2,0.3)--(1.1,1.4);
\draw [->, thick] (0.3,0.2)--(1.2,1.3);
\draw [->, thick] (0.3,0.05)--(2.5,0.05);
\draw [->, thick] (0.3,-0.1)--(2.5,-0.1);
\end{tikzpicture}
\end{center}
Let $\sigma\in S_3$ be the trivial permutation $id$. We have $P_{\mathcal{Q}}=\{id\}$. 

\noindent (1) First we consider a positive real root $\alpha_1+6\alpha_2+2\alpha_3=s_2s_3\alpha_1$ and its corresponding reflection $w=s_2s_3s_1s_3s_2$. Then $R_w=\{23132\}\subset R$, and the following red curve becomes a $\sigma$-admissible curve $\eta$ on $\Sigma_\sigma$, with $\upsilon(\eta)=23132$. The picture on the right shows several copies of $\eta$ on the universal cover of $\Sigma_\sigma$, where each horizontal line segment represents $T_1$, vertical $T_3$, and diagonal $T_2$. Clearly $\eta$ has a self-intersection.

 \begin{center}
 \begin{tikzpicture}[scale=0.5]
 \node at (2.3,0){\tiny{$1$}};
\node at (-1.2,-1.7){\tiny{$3$}};
\node at (-1.2,1.5){\tiny{$2$}};
\draw (0,0) +(60:3cm) -- +(180:3cm) -- +(300:3cm) -- cycle;
\draw [thick] (0,0) -- (1.5,-2.55); 
\draw [red, thick] (0,1.7)
[rounded corners] --(1,-1.7) -- (1.5,-2.55); 
\draw [red, thick] (-1.4,0.95)
[rounded corners] --(-1.1,0) -- (-1.4,-0.95); 
\draw [red, thick] (0,-1.7)
[rounded corners] --(0.75,-0.4) -- (1.5,0); 
\end{tikzpicture}
\begin{tikzpicture}[scale=0.5]
\draw [red, thick] (0,-1.7)
[rounded corners] --(-1,1.7) -- (-1.5,2.55); 
\draw [red, thick] (1.4,-0.95)
[rounded corners] --(1.1,0) -- (1.4,0.95); 
\draw [red, thick] (0,1.7)
[rounded corners] --(-0.75,0.4) -- (-1.5,0); 
\draw [thick] (0,0) -- (-1.5,2.55); 
\node at (1.2,1.7){\tiny{$3$}};
\node at (1.2,-1.5){\tiny{$2$}};
\draw (0,0) +(0:3cm) -- +(120:3cm) -- +(240:3cm) -- cycle;
\end{tikzpicture}
\qquad
\begin{tikzpicture}[scale=0.25mm]
\draw [help lines] (0,0) grid (5,4);
\draw [help lines] (0,1)--(1,0);
\draw [help lines] (0,2)--(2,0);
\draw [help lines] (0,3)--(3,0);
\draw [help lines] (0,4)--(4,0);
\draw [help lines] (1,4)--(5,0);
\draw [help lines] (2,4)--(5,1);
\draw [help lines] (3,4)--(5,2);
\draw [help lines] (4,4)--(5,3);
\draw [red, thick] (2-1,2) 
[rounded corners] -- (2.7-1,2.3) -- (3-1,2.3) -- (3.5-1,2) -- (4-1,1.7) -- (4.3-1,1.7)--(5-1,2);
\draw [red, thick] (2,2-1) 
[rounded corners] -- (2.7,2.3-1) -- (3,2.3-1) -- (3.5,2-1) -- (4,1.7-1) -- (4.3,1.7-1)--(5,2-1);
\draw [red, thick] (2-1,2-1) 
[rounded corners] -- (2.7-1,2.3-1) -- (3-1,2.3-1) -- (3.5-1,2-1) -- (4-1,1.7-1) -- (4.3-1,1.7-1)--(5-1,2-1);
\draw [red, thick] (2-2,2-1) 
[rounded corners] -- (2.7-2,2.3-1) -- (3-2,2.3-1) -- (3.5-2,2-1) -- (4-2,1.7-1) -- (4.3-2,1.7-1)--(5-2,2-1);
\draw [red, thick] (2,2+1) 
[rounded corners] -- (2.7,2.3+1) -- (3,2.3+1) -- (3.5,2+1) -- (4,1.7+1) -- (4.3,1.7+1)--(5,2+1);
\draw [red, thick] (2-1,2+1) 
[rounded corners] -- (2.7-1,2.3+1) -- (3-1,2.3+1) -- (3.5-1,2+1) -- (4-1,1.7+1) -- (4.3-1,1.7+1)--(5-1,2+1);
\draw [red, thick] (2-2,2+1) 
[rounded corners] -- (2.7-2,2.3+1) -- (3-2,2.3+1) -- (3.5-2,2+1) -- (4-2,1.7+1) -- (4.3-2,1.7+1)--(5-2,2+1);
\end{tikzpicture}
 \end{center}
 
\noindent (2) Next we consider another positive real root
\begin{equation}\label{root-example} 1662490\alpha_1+4352663\alpha_2+11395212\alpha_3=(s_3s_2s_1)^4s_2s_3s_2s_1s_2s_3\alpha_2,\end{equation} and its corresponding reflection $w=(s_3s_2s_1)^4s_2s_3s_2s_1s_2s_3s_2s_3s_2s_1s_2s_3s_2(s_1s_2s_3)^4$.  Below we draw a copy of a $\sigma$-admissible curve $\eta$ with $\upsilon(\eta)=(321)^42321232321232(123)^4$ on the universal cover of $\Sigma_\sigma$. Here $\eta$ has no self-intersection.
\begin{center}
\begin{tikzpicture}[scale=0.2mm]
\draw [help lines] (0,0) grid (7,5);
\draw [help lines] (0,1)--(1,0);
\draw [help lines] (0,2)--(2,0);
\draw [help lines] (0,3)--(3,0);
\draw [help lines] (0,4)--(4,0);
\draw [help lines] (0,5)--(5,0);
\draw [help lines] (1,5)--(6,0);
\draw [help lines] (2,5)--(7,0);
\draw [help lines] (3,5)--(7,1);
\draw [help lines] (4,5)--(7,2);
\draw [help lines] (5,5)--(7,3);
\draw [help lines] (6,5)--(7,4);
\draw [thick,red] (1,1)--(1.1,1.1);  
\draw [thick,red] (6,4)--(5.9,3.9);  
\draw [thick,red] (1.36,1.22)--(6-0.36,4-0.22);  
\draw[thick,red,scale=0.5,domain=0.75:13.12,smooth,variable=\t]
plot ({(0.2+0.05*\t)*cos(\t r)+2},{(0.2+0.05*\t)*sin(\t r)+2});
\draw[thick,red,scale=0.5,domain=3.89:16.26,smooth,variable=\t]
plot ({(0.03+0.05*\t)*cos(\t r)+12},{(0.03+0.05*\t)*sin(\t r)+8});
\end{tikzpicture}
\end{center}

\end{Exa}

\begin{Conj}\label{main_conj}
Let $\Gamma_{\sigma}$ be the set of (isotopy classes of) $\sigma$-admissible curves $\eta$ such that  $\eta$ has no     self-intersections, i.e., $\eta(x_1)=\eta(x_2)$ implies $x_1=x_2$ or $\{x_1,x_2\}=\{0,1\}$. For each $\eta\in \Gamma_{\sigma}$, let $w\in W$ be the reflection such that $\upsilon(\eta)\in R_w$, and let $\beta(\eta)$ be the positive real root corresponding to $w$. Then $\{ \beta(\eta)\ : \ \eta\in \cup_{\sigma\in P_{\mathcal{Q}}}\Gamma_{\sigma}\}$ is precisely the set of real Schur roots for $\mathcal{Q}$.
\end{Conj}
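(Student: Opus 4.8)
The plan is to prove the two inclusions — every non-self-crossing admissible curve yields a real Schur root, and every real Schur root is realized by such a curve — by translating both into the combinatorics of reflection factorizations of a Coxeter element and invoking a known algebraic characterization of real Schur roots. The first step I would record is a normal form for the words. If $w = s_\beta$ is the reflection of a positive real root $\beta = u\alpha_j$ with $u = s_{i_1}\cdots s_{i_m}$, then $s_\beta = u s_j u^{-1}$ produces the palindromic word $i_1\cdots i_m\, j\, i_m\cdots i_1 \in R_w$; one checks this against both parts of Example \ref{ex-main}, where $\upsilon(\eta)$ is visibly a palindrome centered at the letter $j$. Geometrically this palindrome is the orientation-reversing symmetry exchanging the counter-clockwise polygon $G_1$ and the clockwise polygon $G_2$, with $\eta$ leaving along $L_1$ and returning along $L_2$. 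I would make this into a description of the isotopy class of a simple admissible curve in terms of the pair $(u,j)$ modulo the commutations that fix $\beta$, thereby reducing the whole statement to deciding which pairs admit a simple representative for some $\sigma\in P_\mathcal{Q}$.

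For the algebraic target I would use the non-crossing characterization of Hubery--Krause \cite{HK} (building on \cite{IS,BDSW}): fixing the Coxeter element $c = s_{\sigma(1)}\cdots s_{\sigma(N)}$ attached to the ordering $\sigma$, a positive real root $\beta$ is a real Schur root if and only if $s_\beta \preceq c$ in the absolute (reflection-length) order, equivalently $s_\beta$ occurs in a reduced factorization $c = t_1\cdots t_N$ into reflections. Since incomparable vertices of an acyclic quiver carry commuting simple reflections, every $\sigma\in P_\mathcal{Q}$ gives the same $c$; nevertheless the union over $\sigma$ is needed because a given Schur root may only acquire a simple representative for a particular choice of $\sigma$ and of word in $R_w$. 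Granting this criterion, the inclusion ``curve $\Rightarrow$ Schur root'' becomes the task of extracting from a simple $\eta$ a reduced reflection factorization of $c$ passing through $s_{\beta(\eta)}$ (equivalently, the exceptional sequence producing the rigid indecomposable of dimension vector $\beta(\eta)$, via \cite{IS}), while ``Schur root $\Rightarrow$ curve'' becomes the task of realizing such a factorization by an honest admissible curve that can be isotoped into a simple one.

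Both inclusions thus rest on a single topological statement: for a suitable $\sigma$, the curve $\eta$ admits a simple representative if and only if $s_{\beta(\eta)} \preceq c$. To establish it I would identify $\Sigma_\sigma$ with the geometric model underlying Bessis' dual presentation of $c$ (the two-polygon picture being the compact form of his punctured disc, cf. the footnote on \cite{Be}), under which reduced factorizations of $c$ correspond to systems of pairwise non-crossing chords and the simplicity of a single curve corresponds to the non-crossing-ness of the associated partition. Concretely I would lift $\eta$ to the universal cover — the triangulated plane when $N=3$ — and compute its minimal self-intersection number $\iota(\eta)$ as the number of essential crossings of the lift with its deck translates, aiming for the identity
$$
\iota(\eta) \;=\; \ell_T\!\left(s_\beta^{-1}c\right) - (N-1),
$$
so that $\iota(\eta) = 0$ exactly when $s_\beta \preceq c$. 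For $N=3$ this is where the spiral-plus-segment normal form of Example \ref{ex-main}(2) enters: the integer $a$ controls the winding of the two spirals and $(b,c)$ the slope of the middle segment, and one verifies in the triangulated plane that such a curve is simple precisely for the real Schur roots, which is the content of the rank-$3$ theorem.

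The main obstacle is proving this intersection-number identity uniformly rather than case by case. On a surface of genus $\lfloor (N-1)/2\rfloor$ the admissible curves wind arbitrarily — these are the spirals — so the usual minimal-position techniques of surface topology do not directly bound the number of essential self-crossings, and one must instead show directly that the minimal self-crossing number in the isotopy class of $\eta$ equals the reflection-length defect. I expect the genuinely hard input to be the infinite-type behaviour of the interval $[e,c]$, which is no longer a finite lattice, together with the bookkeeping of the several words in $R_w$ and the several $\sigma\in P_\mathcal{Q}$ that appear once $\mathcal{Q}$ is not $2$-complete. The rank-$3$ case proved here is exactly the situation in which the surface is a torus and the normal form is explicit; the general conjecture should follow the same blueprint once the identity above is secured in all genera.
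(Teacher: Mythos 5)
Your argument never proves the statement it rests on, and that statement is the entire mathematical content of Conjecture~\ref{main_conj}. The reduction via the Igusa--Schiffler/Hubery--Krause criterion \cite{IS,HK} (that $\beta$ is a real Schur root iff $s_\beta\preceq c$ in absolute order, for the Coxeter element $c$ adapted to the orientation) is legitimate, since that criterion is a known theorem; but after this translation everything hinges on your ``single topological statement'' --- that $\eta$ has a simple representative iff $s_{\beta(\eta)}\preceq c$ --- which you propose to deduce from the identity $\iota(\eta)=\ell_T\bigl(s_{\beta}^{-1}c\bigr)-(N-1)$ and then explicitly defer as ``the main obstacle.'' So the proposal is a restatement of the conjecture in equivalent algebraic language, not a proof. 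Worse, the identity cannot be correct as a per-isotopy-class statement: its right-hand side depends only on $\beta(\eta)$, so it would force \emph{every} admissible curve representing a given root to have the same minimal self-intersection number. When $\mathcal Q$ is not $2$-complete, $R_w$ contains many words, each realized by admissible curves, and the introduction's general form of the conjecture (``$g^{-1}(M)$ contains a non-self-crossing curve'') is phrased that way precisely because rigid modules are also represented by self-crossing curves. In type $A$, where every positive root is Schur and every reflection lies below $c$, your identity would assert that every admissible curve is isotopic to a simple one --- a claim nothing in your outline supports. At minimum the identity must be replaced by a statement about the minimum of $\iota$ over all curves and all $\sigma\in P_{\mathcal Q}$ representing $\beta$, and then both inclusions again require separate, unwritten arguments.

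For comparison: the paper itself does not prove this conjecture in general --- it is stated as a conjecture, verified in low-rank and special-orientation cases, and the one substantial case ($2$-complete rank $3$, Theorem~\ref{thm-main}) is proved by a wholly different route: the Caldero--Keller denominator theorem (Theorem~\ref{thm-CK}) identifying real Schur roots with denominator vectors of cluster variables, a seven-case induction on mutation sequences (Theorem~\ref{thm-mm}) showing those denominators equal $\psi(\hat{\mathfrak w})$, and an explicit bijection between words $\hat{\mathfrak w}=\mathfrak w\mathfrak v$ and the spiral-plus-segment curves $\eta_z$. Your blueprint through non-crossing partitions and Bessis' dual braid picture \cite{Be} is much closer to the later Felikson--Tumarkin proof \cite{FT} mentioned in the footnote than to anything in this paper; what it lacks is exactly the topological-to-algebraic bridge that makes such a proof work.
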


\begin{Rmk}
The correspondence $\eta \mapsto \beta (\eta)$ is not one-to-one in general.  If there are at least two arrows between every pair of vertices on $\mathcal{Q}$, then the conjecture predicts that it would be a bijection.
\end{Rmk}

\subsection{Type $A$ Quivers}
In this subsection we prove Conjecture~\ref{main_conj} for equioriented quivers of type $A$. Let $\mathcal{Q}$ be the following quiver:
$$
1\longrightarrow2\longrightarrow\cdots\longrightarrow n
$$ Since all positive real roots are Schur, it is enough to show that every positive real root can be realized as $\beta(\eta)$ for some $\eta\in \Gamma_{id}$. Each positive real root is equal to $s_{i}s_{i+1}\cdots s_{j-1}\alpha_{j}$ for some $i\leq j\in\{1,...,n\}$, and the corresponding reflection is $w=s_{i}\cdots s_{j-1}s_{j}s_{j-1}\cdots s_i$. There exists an admissible curve $\eta$ with no self-intersections and  $\upsilon(\eta)=i\cdots (j-1)j(j-1)\cdots i\in R_w$  as depicted in the following picture, so we are done.
\begin{center}

\begin{tikzpicture}[scale=0.5]
\draw [red, thick] (0,-2.9) 
[rounded corners] -- (-0.5,-2.3)--(-1,-2.7);
\draw [red, thick] (2.8,-1.05)
[rounded corners] -- (1,0) -- (1,2.67);  
\draw [red, thick] (-1.8,2.2) 
[rounded corners] -- (-1.8,1.6)--(-2.5,1.5);
\draw [red, thick] (-1.0,2.7) 
[rounded corners] -- (-0.6,2.1)--(-0.2,2.9);
\draw [red, thick] (-2.8,0.5) 
[rounded corners] -- (-2.4,0)--(-2.8,-0.5);
\draw [red, thick] (-1.8,-2.2) 
[rounded corners] -- (-1.8,-1.6)--(-2.5,-1.5);
\node at (2.4,-1.9){\tiny{$n$}};
\node at (3.4,0){\tiny{$1$}};
\node at (0.4,-3.2){\tiny{$j$}};
\node at (0.4,3.1){\tiny{$i$}};
\node at (-2.0,-2.8){\tiny{$j-1$}};
\node at (-2.0,2.8){\tiny{$i+1$}};
\node at (-3.3,1.1){\tiny{$i+2$}};
\node at (-2.8,-1.7){\tiny{$\cdot$}};
\node at (-2.9,-1.5){\tiny{$\cdot$}};
\node at (-3.0,-1.3){\tiny{$\cdot$}};
\node at (2.3,2.3){\tiny{$\cdot$}};
\node at (2.4,2.1){\tiny{$\cdot$}};
\node at (2.5,1.9){\tiny{$\cdot$}};
\node at (1.7,-2.7){\tiny{$\cdot$}};
\node at (1.8,-2.6){\tiny{$\cdot$}};
\node at (1.9,-2.5){\tiny{$\cdot$}};
\draw (0,0) +(20:3cm) -- +(60:3cm) -- +(100:3cm) -- +(140:3cm) --
+(180:3cm) -- +(220:3cm) -- +(260:3cm) -- +(300:3cm) -- +(340:3cm) -- cycle;
\end{tikzpicture}
\begin{tikzpicture}[scale=0.5]
\node at (-2.4,1.9){\tiny{$n$}};
\node at (-2.3,-2.3){\tiny{$\cdot$}};
\node at (-2.4,-2.1){\tiny{$\cdot$}};
\node at (-2.5,-1.9){\tiny{$\cdot$}};
\node at (-1.7,2.7){\tiny{$\cdot$}};
\node at (-1.8,2.6){\tiny{$\cdot$}};
\node at (-1.9,2.5){\tiny{$\cdot$}};
\draw [red, thick] (0,-2.9) 
[rounded corners] -- (0.5,-2.3)--(1,-2.7);
\draw [red, thick] (1.8,2.2) 
[rounded corners] -- (1.8,1.6)--(2.5,1.5);
\draw [red, thick] (-1.1,2.65) 
[rounded corners] -- (0.4,2.2)--(1.0,2.7);
\draw [red, thick] (2.8,0.5) 
[rounded corners] -- (2.4,0)--(2.8,-0.5);
\draw [red, thick] (1.8,-2.2) 
[rounded corners] -- (1.8,-1.6)--(2.5,-1.5);
\draw [red, thick] (-2.8,1.05)
[rounded corners] -- (-1,0) -- (-1,-2.67);  
\draw (0,0) +(0:3cm) -- +(40:3cm) -- +(80:3cm) -- +(120:3cm) --
+(160:3cm) -- +(200:3cm) -- +(240:3cm) -- +(280:3cm) -- +(320:3cm) -- cycle;
\node at (-0.4,-3.1){\tiny{$i$}};
\node at (-0.4,3.1){\tiny{$j$}};
\node at (2.0,-2.7){\tiny{$i+1$}};
\node at (2.0,2.7){\tiny{$j-1$}};
\node at (3.3,-1.2){\tiny{$i+2$}};
\node at (3.0,1.2){\tiny{$\cdot$}};
\node at (3.1,1.0){\tiny{$\cdot$}};
\node at (2.9,1.4){\tiny{$\cdot$}};
\end{tikzpicture}

\end{center}

\subsection{Type $D$ Quivers}
In this subsection we prove Conjecture~\ref{main_conj} for the following quiver:
\[ \xymatrix@R-=0.3 cm{ & & n-1  \\ 1\longrightarrow 2\longrightarrow \cdots \longrightarrow  n-2 \hspace{- 1 cm} &  \ar[ru] \ar[rd] & \\  & & n \phantom{1} } \]
For the corresponding root system of type $D_n$,  all positive real roots are Schur.  Each positive real root is equal to one of the following:
\begin{align}
& s_{i}s_{i+1}\cdots s_{j-1}\alpha_{j}, & & 1 \le i\leq j \le n-1 , \label{root-f} \\ 
& s_{i}s_{i+1}\cdots s_{n-2}\alpha_{n},& & 1 \le i \le n-1 , \label{root-s} \\
&(s_{j}s_{j+1}\cdots s_{n-2})(s_{n}s_{n-1}s_{n-2}\cdots s_{i+1})\alpha_{i},& & 1 \le i < j \le n-1. \label{root-t}
\end{align}
The roots in \eqref{root-f} are of the same form as in type $A$. 
For each of the corresponding reflections to the roots in \eqref{root-s}, there exists an admissible curve $\eta$ with no self-intersections on $\Sigma_\sigma$, where $\sigma$ is either the permutation interchanging only $n-1$ and $n$  (see the picture below) or the trivial permutation $id$.  
\begin{center}
\begin{tikzpicture}[scale=0.5]
\draw [red, thick] (0,-2.9) 
[rounded corners] -- (-0.5,-2.3)--(-1,-2.7);
\draw [red, thick] (1,2.0) -- (1,2.67);
\draw [red, thick] (-1.8,2.2) 
[rounded corners] -- (-1.8,1.6)--(-2.5,1.5);
\draw [red, thick] (-1.0,2.7) 
[rounded corners] -- (-0.6,2.1)--(-0.2,2.9);
\draw [red, thick] (-2.8,0.5) 
[rounded corners] -- (-2.4,0)--(-2.8,-0.5);
\draw [red, thick] (-1.8,-2.2) 
[rounded corners] -- (-1.8,-1.6)--(-2.5,-1.5);
\node at (3.8,0){\tiny{$1$}};
\node at (0.4,-3.2){\tiny{$n$}};
\node at (2.6,-2.2){\tiny{$n-1$}};
\node at (0.4,3.1){\tiny{$i$}};
\node at (-2.0,-2.8){\tiny{$n-2$}};
\node at (-2.0,2.8){\tiny{$i+1$}};
\node at (-3.3,1.1){\tiny{$i+2$}};
\node at (-2.8,-1.7){$\cdot$};
\node at (-2.9,-1.5){$\cdot$};
\node at (-3.0,-1.3){$\cdot$};
\draw (0,0) +(20:3cm) -- +(60:3cm) -- +(100:3cm) -- +(140:3cm) --
+(180:3cm) -- +(220:3cm) -- +(260:3cm) -- +(300:3cm) -- +(340:3cm) -- cycle;
\draw [red, thick] (2.8,-1.05)
[rounded corners] -- (1,0) -- (1,2.67);  
\node at (2.3,2.3){\tiny{$\cdot$}};
\node at (2.4,2.1){\tiny{$\cdot$}};
\node at (2.5,1.9){\tiny{$\cdot$}};
\end{tikzpicture}
\begin{tikzpicture}[scale=0.5]
\node at (-2.3,-2.3){\tiny{$\cdot$}};
\node at (-2.4,-2.1){\tiny{$\cdot$}};
\node at (-2.5,-1.9){\tiny{$\cdot$}};
\draw [red, thick] (-2.8,1.05)
[rounded corners] -- (-1,0) -- (-1,-2.67);  
\draw [red, thick] (0,-2.9) 
[rounded corners] -- (0.5,-2.3)--(1,-2.7);
\draw [red, thick] (1.8,2.2) 
[rounded corners] -- (1.8,1.6)--(2.5,1.5);
\draw [red, thick] (-1.1,2.65) 
[rounded corners] -- (0.4,2.2)--(1.0,2.7);
\draw [red, thick] (2.8,0.5) 
[rounded corners] -- (2.4,0)--(2.8,-0.5);
\draw [red, thick] (1.8,-2.2) 
[rounded corners] -- (1.8,-1.6)--(2.5,-1.5);
\draw [red, thick] (-1,-2.0) -- (-1,-2.67);
\draw (0,0) +(0:3cm) -- +(40:3cm) -- +(80:3cm) -- +(120:3cm) --
+(160:3cm) -- +(200:3cm) -- +(240:3cm) -- +(280:3cm) -- +(320:3cm) -- cycle;
\node at (-0.4,-3.1){\tiny{$i$}};
\node at (-2.6,2.2){\tiny{$n-1$}};
\node at (-0.4,3.1){\tiny{$n$}};
\node at (2.0,-2.7){\tiny{$i+1$}};
\node at (2.0,2.7){\tiny{$n-2$}};
\node at (3.3,-1.2){\tiny{$i+2$}};
\node at (3.0,1.2){$\cdot$};
\node at (3.1,1.0){$\cdot$};
\node at (2.9,1.4){$\cdot$};
\end{tikzpicture}
\end{center}
For each of the corresponding reflections to the roots in \eqref{root-t}, there exists an admissible curve $\eta$ with no self-intersections on $\Sigma_{id}$. Such curve is given below, where we omit drawing the edges $1,...,i-1$ with which $\eta$ does not intersect.
\begin{center}
\begin{tikzpicture}[scale=0.5]
\draw [red, thick] (0,-2.9) 
[rounded corners] -- (-0.5,-2.3)--(-1,-2.7);
\draw [red, thick] (1.2,2.67)
[rounded corners] -- (1.4,2.3)--(1.7,2.4);   
\draw [red, thick] (-1.8,2.2) 
[rounded corners] -- (-1.8,1.6)--(-2.5,1.5);
\draw [red, thick] (-1.0,2.7) 
[rounded corners] -- (-0.6,2.1)--(-0.2,2.9);
\draw [red, thick] (-1.8,-2.2) 
[rounded corners] -- (-1.8,-1.6)--(-2.5,-1.5);
\draw [red, thick] (2.5,1.4) 
[rounded corners] -- (2.56,0)--(2.76,0);  
\draw [red, thick] (-1.3,2.52) 
[rounded corners] --(0,0)--(2.8,-1.05);   
\draw [red, thick] (-1.6,2.3) 
[rounded corners] -- (-1.7,1.3)--(-2.59,1.3);   
\draw [red, thick] (-2.75,0.7) 
[rounded corners] -- (-2.1,0)--(-2.75,-0.7);
\draw [red, thick] (-2.8,0.5) 
[rounded corners] -- (-2.4,0)--(-2.8,-0.5);
\draw [red, thick] (-1.6,-2.41) 
[rounded corners] -- (-1.6,-1.5)--(-2.6,-1.2); 
\draw [red, thick] (2.55,-1.3) 
[rounded corners] -- (0.5,-1.1)--(-1.2,-2.6);  
\draw [red, thick] (1.2,-2.67)
[rounded corners] -- (1.4,-2.3)--(1.7,-2.4);   
\node at (0.4,-3.2){\tiny{$n-1$}};
\node at (2.6,-1.8){\tiny{$n$}};
\node at (2.6,1.8){$\cdot$};
\node at (2.5,2){$\cdot$};
\node at (2.4,2.2){$\cdot$};
\node at (3.3,0){\tiny{$i$}};
\node at (0.4,3.2){\tiny{$j-1$}};
\node at (-2.0,-2.8){\tiny{$n-2$}};
\node at (-2.0,2.8){\tiny{$j$}};
\node at (-3.3,1.1){\tiny{$j+1$}};
\node at (-2.8,-1.7){$\cdot$};
\node at (-2.9,-1.5){$\cdot$};
\node at (-3.0,-1.3){$\cdot$};
\draw (0,0) +(20:3cm) -- +(60:3cm) -- +(100:3cm) -- +(140:3cm) --
+(180:3cm) -- +(220:3cm) -- +(260:3cm) -- +(300:3cm) -- +(340:3cm) -- cycle;
\end{tikzpicture}
\begin{tikzpicture}[scale=0.5]
\node at (-2.6,1.8){\tiny{$n$}};
\node at (-2.6,-1.8){$\cdot$};
\node at (-2.5,-2){$\cdot$};
\node at (-2.4,-2.2){$\cdot$};
\draw [red, thick] (0,-2.9) 
[rounded corners] -- (0.5,-2.3)--(1,-2.7);
\draw [red, thick] (1.8,2.2) 
[rounded corners] -- (1.8,1.6)--(2.5,1.5);
\draw [red, thick] (0,2.85) 
[rounded corners] -- (0.4,2.2)--(1.0,2.7);
\draw [red, thick] (2.8,0.5) 
[rounded corners] -- (2.4,0)--(2.8,-0.5);
\draw [red, thick] (1.8,-2.2) 
[rounded corners] -- (1.8,-1.6)--(2.5,-1.5);
\draw [red, thick] (-1.2,-2.67)
[rounded corners] -- (-1.4,-2.3)--(-1.7,-2.4);   
\draw [red, thick] (-2.5,-1.4) 
[rounded corners] -- (-2.56,0)--(-2.76,0);  
\draw [red, thick] (1.3,-2.52) 
[rounded corners] --(0,0)--(-2.8,1.05);   
\draw [red, thick] (1.6,-2.3) 
[rounded corners] -- (1.7,-1.3)--(2.59,-1.3);   
\draw [red, thick] (2.75,-0.7) 
[rounded corners] -- (2.1,0)--(2.75,0.7);
\draw [red, thick] (1.6,2.41) 
[rounded corners] -- (1.6,1.5)--(2.6,1.2); 
\draw [red, thick] (-2.55,1.3) 
[rounded corners] -- (-0.5,1.1)--(1.2,2.6);  
\draw [red, thick] (-1.2,2.67)
[rounded corners] -- (-1.4,2.3)--(-1.7,2.4);   
\draw (0,0) +(0:3cm) -- +(40:3cm) -- +(80:3cm) -- +(120:3cm) --
+(160:3cm) -- +(200:3cm) -- +(240:3cm) -- +(280:3cm) -- +(320:3cm) -- cycle;
\node at (-0.4,-3.2){\tiny{$j-1$}};
\node at (-0.4,3.1){\tiny{$n-1$}};
\node at (2.0,-2.7){\tiny{$j$}};
\node at (2.0,2.7){\tiny{$n-2$}};
\node at (3.3,-1.2){\tiny{$j+1$}};
\node at (3.0,1.2){$\cdot$};
\node at (3.1,1.0){$\cdot$};
\node at (2.9,1.4){$\cdot$};
\end{tikzpicture}
\end{center}
\subsection{A quiver of type $E_8$}
In this subsection we consider the following quiver:
\[ \xymatrix@R-=0.3 cm{ & & \quad 6\longrightarrow 7  \\ 5\longleftarrow 4\longleftarrow 3\longleftarrow 2\longleftarrow  1 \hspace{- 1 cm} &  \ar[ru] \ar[rd] & \\  & & \quad 8 \phantom{\longrightarrow 1} } \]
Again every positive real root is Schur.  The highest positive real root $$6\alpha_1+5\alpha_2+4\alpha_3+3\alpha_4+2\alpha_5+4\alpha_6+2\alpha_7+3\alpha_8$$ can be given by   $(s_8s_7\cdots s_2s_1)^5(s_8s_7\cdots s_2)\alpha_1$, and one of non-reduced expressions for the corresponding reflection is $$(s_8s_7\cdots s_2s_1)^5(s_8s_7\cdots s_2)s_1(s_2\cdots s_7s_8)(s_1s_2\cdots s_7s_8)^5,$$ which gives rise to the following non-self-intersecting curve on $\Sigma_{id}$. 

\begin{center}
\begin{tikzpicture}[scale=0.5]
\node at (0,-3.2){\tiny{$7$}};
\node at (2.4,-1.9){\tiny{$8$}};
\node at (2.3,2.1){\tiny{$2$}};
\node at (3.3,0){\tiny{$1$}};
\node at (0,3.2){\tiny{$3$}};
\node at (-2.0,-2.4){\tiny{$6$}};
\node at (-2.0,2.5){\tiny{$4$}};
\node at (-3.2,0){\tiny{$5$}};
\draw (0,0) +(22.5:3cm) -- +(67.5:3cm) -- +(112.5:3cm) -- +(157.5:3cm) --
+(202.5:3cm) -- +(247.5:3cm) -- +(292.5:3cm) -- +(337.5:3cm) -- cycle;
\draw [red, thick] (0.95,2.77) arc (180:305:0.4cm);
\draw [red, thick] (0.85,2.77) arc (180:305:0.5cm);
\draw [red, thick] (0.75,2.77) arc (180:305:0.6cm);
\draw [red, thick] (0.65,2.77) arc (180:305:0.7cm);
\draw [red, thick] (0.55,2.77) arc (180:305:0.8cm);
\draw [red, thick] (0.45,2.77) arc (180:305:0.9cm);
\draw [red, thick] (-0.95,2.77) arc (360:235:0.4cm);
\draw [red, thick] (-0.85,2.77) arc (360:235:0.5cm);
\draw [red, thick] (-0.75,2.77) arc (360:235:0.6cm);
\draw [red, thick] (-0.65,2.77) arc (360:235:0.7cm);
\draw [red, thick] (-0.55,2.77) arc (360:235:0.8cm);
\draw [red, thick] (-0.45,2.77) arc (360:235:0.9cm);
\draw [red, thick] (0.95,-2.77) arc (180:55:0.4cm);
\draw [red, thick] (0.85,-2.77) arc (180:55:0.5cm);
\draw [red, thick] (0.75,-2.77) arc (180:55:0.6cm);
\draw [red, thick] (0.65,-2.77) arc (180:55:0.7cm);
\draw [red, thick] (0.55,-2.77) arc (180:55:0.8cm);
\draw [red, thick] (0.45,-2.77) arc (180:55:0.9cm);
\draw [red, thick] (-0.95,-2.77) arc (0:125:0.4cm);
\draw [red, thick] (-0.85,-2.77) arc (0:125:0.5cm);
\draw [red, thick] (-0.75,-2.77) arc (0:125:0.6cm);
\draw [red, thick] (-0.65,-2.77) arc (0:125:0.7cm);
\draw [red, thick] (-0.55,-2.77) arc (0:125:0.8cm);
\draw [red, thick] (-0.45,-2.77) arc (0:125:0.9cm);
\draw [red, thick] (-2.77,0.95) arc (270:395:0.4cm);
\draw [red, thick] (-2.77,0.85) arc (270:395:0.5cm);
\draw [red, thick] (-2.77,0.75) arc (270:395:0.6cm);
\draw [red, thick] (-2.77,0.65) arc (270:395:0.7cm);
\draw [red, thick] (-2.77,0.55) arc (270:395:0.8cm);
\draw [red, thick] (-2.77,0.45) arc (270:395:0.9cm);
\draw [red, thick] (-2.77,-0.95) arc (90:-35:0.4cm);
\draw [red, thick] (-2.77,-0.85) arc (90:-35:0.5cm);
\draw [red, thick] (-2.77,-0.75) arc (90:-35:0.6cm);
\draw [red, thick] (-2.77,-0.65) arc (90:-35:0.7cm);
\draw [red, thick] (-2.77,-0.55) arc (90:-35:0.8cm);
\draw [red, thick] (-2.77,-0.45) arc (90:-35:0.9cm);
\draw [red, thick] (2.45,-1.5) arc (215:70:0.25cm);    
\draw [red, thick] (2.77,-0.85) arc (90:215:0.5cm);
\draw [red, thick] (2.77,-0.75) arc (90:215:0.6cm);
\draw [red, thick] (2.77,-0.65) arc (90:215:0.7cm);
\draw [red, thick] (2.77,-0.55) arc (90:215:0.8cm);
\draw [red, thick] (2.77,-0.45) arc (90:215:0.9cm);
\draw [red, thick] (2.77,0.85) arc (270:145:0.5cm);
\draw [red, thick] (2.77,0.75) arc (270:145:0.6cm);
\draw [red, thick] (2.77,0.65) arc (270:145:0.7cm);
\draw [red, thick] (2.77,0.55) arc (270:145:0.8cm);
\draw [red, thick] (2.77,0.45) arc (270:145:0.9cm);
\draw [red, thick] (2.77,0) arc (265:141:1.2cm);
\end{tikzpicture}
\begin{tikzpicture}[scale=0.5]
\draw [red, thick] (-2.77,0) arc (85:-39:1.2cm);
\draw [red, thick] (-2.45,1.5) arc (35:-110:0.25cm);   
\draw [red, thick] (-2.77,0.85) arc (270:395:0.5cm);
\draw [red, thick] (-2.77,0.75) arc (270:395:0.6cm);
\draw [red, thick] (-2.77,0.65) arc (270:395:0.7cm);
\draw [red, thick] (-2.77,0.55) arc (270:395:0.8cm);
\draw [red, thick] (-2.77,0.45) arc (270:395:0.9cm);
\draw [red, thick] (-2.77,-0.85) arc (90:-35:0.5cm);
\draw [red, thick] (-2.77,-0.75) arc (90:-35:0.6cm);
\draw [red, thick] (-2.77,-0.65) arc (90:-35:0.7cm);
\draw [red, thick] (-2.77,-0.55) arc (90:-35:0.8cm);
\draw [red, thick] (-2.77,-0.45) arc (90:-35:0.9cm);
\draw [red, thick] (2.77,-0.95) arc (90:215:0.4cm);
\draw [red, thick] (2.77,-0.85) arc (90:215:0.5cm);
\draw [red, thick] (2.77,-0.75) arc (90:215:0.6cm);
\draw [red, thick] (2.77,-0.65) arc (90:215:0.7cm);
\draw [red, thick] (2.77,-0.55) arc (90:215:0.8cm);
\draw [red, thick] (2.77,-0.45) arc (90:215:0.9cm);
\draw [red, thick] (2.77,0.95) arc (270:145:0.4cm);
\draw [red, thick] (2.77,0.85) arc (270:145:0.5cm);
\draw [red, thick] (2.77,0.75) arc (270:145:0.6cm);
\draw [red, thick] (2.77,0.65) arc (270:145:0.7cm);
\draw [red, thick] (2.77,0.55) arc (270:145:0.8cm);
\draw [red, thick] (2.77,0.45) arc (270:145:0.9cm);
\draw [red, thick] (0.95,2.77) arc (180:305:0.4cm);
\draw [red, thick] (0.85,2.77) arc (180:305:0.5cm);
\draw [red, thick] (0.75,2.77) arc (180:305:0.6cm);
\draw [red, thick] (0.65,2.77) arc (180:305:0.7cm);
\draw [red, thick] (0.55,2.77) arc (180:305:0.8cm);
\draw [red, thick] (0.45,2.77) arc (180:305:0.9cm);
\draw [red, thick] (-0.95,2.77) arc (360:235:0.4cm);
\draw [red, thick] (-0.85,2.77) arc (360:235:0.5cm);
\draw [red, thick] (-0.75,2.77) arc (360:235:0.6cm);
\draw [red, thick] (-0.65,2.77) arc (360:235:0.7cm);
\draw [red, thick] (-0.55,2.77) arc (360:235:0.8cm);
\draw [red, thick] (-0.45,2.77) arc (360:235:0.9cm);
\draw [red, thick] (0.95,-2.77) arc (180:55:0.4cm);
\draw [red, thick] (0.85,-2.77) arc (180:55:0.5cm);
\draw [red, thick] (0.75,-2.77) arc (180:55:0.6cm);
\draw [red, thick] (0.65,-2.77) arc (180:55:0.7cm);
\draw [red, thick] (0.55,-2.77) arc (180:55:0.8cm);
\draw [red, thick] (0.45,-2.77) arc (180:55:0.9cm);
\draw [red, thick] (-0.95,-2.77) arc (0:125:0.4cm);
\draw [red, thick] (-0.85,-2.77) arc (0:125:0.5cm);
\draw [red, thick] (-0.75,-2.77) arc (0:125:0.6cm);
\draw [red, thick] (-0.65,-2.77) arc (0:125:0.7cm);
\draw [red, thick] (-0.55,-2.77) arc (0:125:0.8cm);
\draw [red, thick] (-0.45,-2.77) arc (0:125:0.9cm);
\node at (0,3.2){\tiny{$7$}};
\node at (-2.4,1.9){\tiny{$8$}};
\node at (-2.3,-2.1){\tiny{$2$}};
\node at (0,-3.2){\tiny{$3$}};
\node at (2.0,2.4){\tiny{$6$}};
\node at (2.0,-2.5){\tiny{$4$}};
\node at (3.2,0){\tiny{$5$}};
\draw (0,0) +(22.5:3cm) -- +(67.5:3cm) -- +(112.5:3cm) -- +(157.5:3cm) --
+(202.5:3cm) -- +(247.5:3cm) -- +(292.5:3cm) -- +(337.5:3cm) -- cycle;
\end{tikzpicture}
\end{center}
On the actual Riemann surface $\Sigma_{id}$, this is just a spiral around one vertex followed by another spiral around the other vertex.

\subsection{Rank 2 Quivers}
In this subsection we prove Conjecture~\ref{main_conj} for rank 2 quivers. Again in this case, all positive real roots are Schur. The reflection corresponding to each positive real root is of the form $s_{i}s_{j}s_i\cdots s_is_{j}s_{i}$ with $\{i,j\}=\{1,2\}$. Cearly 
there exists a spiral $\eta$ (as an admissible curve) on the sphere $\Sigma_\sigma$ such that $\eta$ has no self-intersections and  $\upsilon(\eta)=iji\cdots iji$, where $\sigma$ is either $id$ or the transposition $(12)$.

\subsection{Rank 3 tame quiver}
Let $\mathcal{Q}$ be the rank 3 acyclic quiver of type $A_2^{(1)}$ as follows:
 \begin{center}
 \begin{tikzpicture}[scale=0.7]
 \node at (0,0){\tiny{$1$}};
\node at (2.8,0){\tiny{$3$}};
\node at (1.4,1.5){\tiny{$2$}};
\draw [->, thick] (1.65,1.35)--(2.55,0.25);
\draw [->, thick] (0.25,0.35)--(1.15,1.45);
\draw [->, thick] (0.3,0)--(2.5,0);
\end{tikzpicture}
\end{center}
We have $P_{\mathcal{Q}}=\{id\}$. The positive real Schur roots are, for $n \ge 0$, 
\begin{align}
(n+1) \alpha_1 + n \alpha_2 + n \alpha_3 & = \begin{cases} (s_1s_2s_3s_2)^{\frac n 2} \alpha_1 & \text{ if $n$ is even}, \\  (s_1s_2s_3s_2)^{\frac {n-1} 2}s_1s_2 \alpha_3 & \text{ if $n$ is odd}, \end{cases}  \label{eqn-rs-1}\\
(n+1) \alpha_1 + (n+1) \alpha_2 + n \alpha_3 & = (s_1s_2s_3s_2)^n s_1 \alpha_2, \label{eqn-rs-2} \\ 
n \alpha_1 + (n+1) \alpha_2 + (n+1) \alpha_3 & = \begin{cases} (s_2s_3s_2s_1)^{\frac n 2} s_2\alpha_3 & \text{ if $n$ is even}, \\  (s_2s_3s_2s_1)^{\frac {n-1} 2} s_2 s_3 s_2\alpha_1  & \text{ if $n$ is odd}, \end{cases}, \label{eqn-rs-3} \\
n \alpha_1 + n \alpha_2 + (n+1) \alpha_3 & = (s_2s_3s_2s_1)^n s_2 s_3 \alpha_2, \label{eqn-rs-4} \\
\alpha_2 & \quad \text{ and } \quad \alpha_1+\alpha_3=s_1\alpha_3. \label{eqn-rs-5}
\end{align}
One can see that admissible curves corresponding to \eqref{eqn-rs-1}-\eqref{eqn-rs-4} are essentially determined by line segments with slopes $\frac n {n-1}, \frac {n+2}{n}, \frac {n+1}{n+2}, \frac {n+1} {n+3}$, respectively, on the universal cover of the torus with triangulation as in Example \ref{ex-main}. Curves corresponding to \eqref{eqn-rs-3} and \eqref{eqn-rs-4} are (isotopic to) line segments. When $n\ge 1$, curves corresponding to \eqref{eqn-rs-1} and \eqref{eqn-rs-2} revolve $180^\circ$ around a vertex  at the beginning, follow a line segment, and again revolve  $180^\circ$ around a vertex at the end. Clearly, such curves do not have self-intersections. The Schur roots $\alpha_2$ and $\alpha_1+\alpha_3$ trivially correspond to non-self-intersecting curves.

Conversely, an admissible curve with no self-intersections on a torus becomes a curve on the universal cover isotopic to a union of two spirals around vertices, which are symmetric to each other, and a line segment in the middle of the two spirals. It can be checked that each of such curves gives rise to one of the real Schur roots listed in  \eqref{eqn-rs-1}-\eqref{eqn-rs-5}.
Thus Conjecture \ref{main_conj} is verified in this case.

\section{Preliminaries} \label{Weyl}

\subsection{Cluster variables}
In this subsection, we review some notions from the theory of cluster algebras 
  introduced by Fomin and Zelevinsky in \cite{FZ}.
Our definition follows the exposition in \cite{FZ4}. For our purpose, it is enough to define the coefficient-free cluster algebras of rank 3.

We consider a field $\mathcal{F}$
isomorphic to the field of rational functions in $3$ independent
variables.

\begin{Def}
\label{def:seed}
A \emph{labeled seed} in~$\mathcal{F}$ is
a pair $(\mathbf{x}, B)$, where
\begin{itemize}
\item
$\mathbf{x} = (x_1, x_2, x_3)$ is a triple
from $\mathcal{F}$
forming a \emph{free generating set} over $\mathbb{Q}$,
 and
\item
$B = (b_{ij})$ is a $3\!\times\! 3$ integer \emph{skew-symmetric} matrix.
\end{itemize}
That is, $x_1, x_2, x_3$
are algebraically independent over~$\mathbb{Q}$, and
$\mathcal{F} =\mathbb{Q}(x_1, x_2, x_3)$.
We refer to~$\mathbf{x}$ as the (labeled)
\emph{cluster} of a labeled seed $(\mathbf{x}, B)$,
and to the
matrix~$B$ as the \emph{exchange matrix}.
\end{Def}

We  use the notation
$[x]_+ = \max(x,0)$
 and
\begin{align*}
\textup{sgn}(x) &=
\begin{cases}
x/|x| & \text{if $x\neq0$;}\\
0  & \text{if $x=0$.}
\end{cases}
\end{align*}

\begin{Def}
\label{def:seed-mutation}
Let $(\mathbf{x}, B)$ be a labeled seed in $\mathcal{F}$,
and let $k \in \{1,2,3\}$.
The \emph{seed mutation} $\mu_k$ in direction~$k$ transforms
$(\mathbf{x}, B)$ into the labeled seed
$\mu_k(\mathbf{x}, B)=(\mathbf{x}', B')$ defined as follows:
\begin{itemize}
\item
The entries of $B'=(b'_{ij})$ are given by
\begin{equation}
\label{eq:matrix-mutation}
b'_{ij} =
\begin{cases}
-b_{ij} & \text{if $i=k$ or $j=k$;} \\[.05in]
b_{ij} + \textup{sgn}(b_{ik}) \ [b_{ik}b_{kj}]_+
 & \text{otherwise.}
\end{cases}
\end{equation}
\item
The cluster $\mathbf{x}'=(x_1',x_2',x_3')$ is given by
$x_j'=x_j$ for $j\neq k$,
whereas $x'_k \in \mathcal{F}$ is determined
by the \emph{exchange relation}
\begin{equation}
\label{exchange relation}
x'_k = \frac
{\prod  x_i^{[b_{ik}]_+}
+ \ \prod x_i^{[-b_{ik}]_+}}{x_k} \, .
\end{equation}
\end{itemize}
\end{Def}


Let $B=(b_{ij})$ be a $3 \times 3$ skew-symmetric matrix, and $\mathcal Q$ be the rank $3$ acyclic quiver corresponding to $B$, with the set $I=\{1,2,3 \}$ of vertices, such that  the quiver $\mathcal{Q}$ has $b_{ij}$ arrows from $i$ to $j$ for $b_{ij}>0$. We will write $\xymatrix{i \ar[r]^{b_{ij}} &j}$ to represent the $b_{ij}$ arrows. Assume that $|b_{ij}| \ge 2$ for $i \neq j$.  Without loss of generality, we further assume that the vertex $1$ of $\mathcal Q$ is a source, the vertex $2$ a node, and the vertex $3$ a sink:
\[ \xymatrix{ & 2 \ar[rd]^{b_{23}}& \\ 1\ar[ru]^{b_{12}} \ar[rr]^{b_{13}} & & 3} \]

The matrix resulting from the mutation of  $B$ at the vertex $i \in I$, will be denoted by $B(i)$, and the matrix resulting from the mutation of $B(i)$ at the vertex $j \in I$ by $B(ij)$. Then the matrix $B(i_1i_2 \cdots i_k)$, $i_p \in I$ is the result of $k$ mutations. We will write $B(\mathfrak w)=(b_{ij}(\mathfrak w))$ for $\mathfrak w = i_1 \cdots i_k \in \mathfrak W$. When  $\mu_{\mathfrak w}$ is a sequence $(\mu_{i_1},\cdots, \mu_{i_k})$ of mutations performed from left to right, we write
$\Xi(\mathfrak w)= \mu_{\mathfrak w}(\Xi)= \mu_{i_k} \circ \cdots \circ \mu_{i_1} (\Xi)$  for a labeled seed $\Xi$.

The \emph{cluster variables} are the elements of clusters obtained  by sequences of seed mutations from the initial seed $((x_1,x_2,x_3),B)$.
 The remarkable {\it Laurent phenomenon} that  Fomin and Zelevinsky proved in \cite{FZ} implies the following:
\begin{Thm}
\label{Laurent}
Each cluster variable  is a Laurent polynomial over $\mathbb{Z}$ in the initial  cluster variables $x_1,x_2,x_3$.
\end{Thm}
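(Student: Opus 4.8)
The plan is to prove the Laurent phenomenon by the Fomin--Zelevinsky \emph{caterpillar} induction on the tree of seeds, the essential ring-theoretic input being that the Laurent polynomial ring $\mathcal{L} := \mathbb{Z}[x_1^{\pm 1}, x_2^{\pm 1}, x_3^{\pm 1}]$ is a unique factorization domain, so that coprimality and exact divisibility of cluster variables are well defined. First I would set up the combinatorial model: let $\mathbb{T}_3$ be the $3$-regular tree whose edges at each vertex carry the three distinct labels $\{1,2,3\}$, attach the initial seed $((x_1,x_2,x_3),B)$ to a root $t_0$, and propagate seeds along edges by the mutations $\mu_k$ of Definition \ref{def:seed-mutation}. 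Since both matrix mutation \eqref{eq:matrix-mutation} and the exchange relation \eqref{exchange relation} are involutive, $\mu_k\circ\mu_k=\mathrm{id}$, so crossing an edge twice recovers the original seed; hence it suffices to establish Laurentness along reduced mutation words (no two consecutive equal labels), which are exactly the words of $\mathfrak{W}$. For a vertex $t$ I must show every entry of its cluster lies in $\mathcal{L}$, and I would argue by induction on the tree distance $d(t_0,t)$, the cases $d\le 1$ being immediate from \eqref{exchange relation}.

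The inductive core is the \emph{Caterpillar Lemma}: along a spine $t_0 - t_1 - \cdots - t_m$ in $\mathbb{T}_3$ whose edge labels alternate between two fixed indices, one tracks the pair of cluster variables being exchanged and shows that the variable produced at $t_m$ is a Laurent polynomial in the cluster at $t_0$. The mechanism is to clear the denominator introduced by the final exchange relation, of the form $x'_k\, x_k = \prod x_i^{[b_{ik}]_+} + \prod x_i^{[-b_{ik}]_+}$, and to verify, using the inductive Laurentness of the neighbouring clusters, that the only possible denominator of $x'_k$ is a power of $x_k$. One then rules out a genuine pole along $x_k=0$ by specializing $x_k=0$ and checking that the numerator does not vanish there. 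This nonvanishing is precisely a \textbf{coprimality} statement, and it is where I expect the main obstacle to lie: one must verify that the two exchange binomials attached to consecutive spine edges are coprime in $\mathcal{L}$ and that the far binomial is independent of $x_k$, conditions analogous to Fomin--Zelevinsky's (C1)--(C3) that have to be shown to survive each step of the matrix mutation \eqref{eq:matrix-mutation}. A small but useful fact in this bookkeeping is that for each $i$ at most one of $[b_{ik}]_+,\,[-b_{ik}]_+$ is nonzero, so the two exchange monomials share no variable and the binomial itself is divisible by no $x_i$; tracking how such disjointness and the signs of the $b_{ik}$ evolve under mutation is the delicate part.

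Finally I would assemble the global statement from the local one. Any vertex $t$ is reached from $t_0$ by a reduced word, and by repeatedly applying the Caterpillar Lemma to overlapping two-label spines — organized, as in the original argument, around a caterpillar-shaped subtree with the third mutation direction appearing on the legs — Laurentness propagates from $t_0$ outward to $t$. Because all operations take place inside the ring $\mathcal{L}$, integer coefficients are preserved at every step, yielding that each cluster variable is a Laurent polynomial over $\mathbb{Z}$ in $x_1,x_2,x_3$, which is the assertion of Theorem \ref{Laurent}.
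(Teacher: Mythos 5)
First, a point of order: the paper does not prove Theorem \ref{Laurent} at all. It is imported from Fomin--Zelevinsky with the citation \cite{FZ} (``The remarkable Laurent phenomenon that Fomin and Zelevinsky proved in \cite{FZ} implies the following''), and the paper only uses its corollary that denominators of cluster variables are well defined. So the comparison here is between your sketch and the original caterpillar argument of \cite{FZ}, which is evidently what you are trying to reconstruct.

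Your outline names the right ingredients --- the tree of seeds, involutivity of the mutations of Definition \ref{def:seed-mutation}, the UFD $\mathbb{Z}[x_1^{\pm 1},x_2^{\pm 1},x_3^{\pm 1}]$, and coprimality conditions that must be shown to survive the matrix mutation \eqref{eq:matrix-mutation} --- but the assembly step contains a genuine gap. You state the Caterpillar Lemma only for spines whose edge labels alternate between \emph{two fixed indices}, and then claim that Laurentness ``propagates'' to an arbitrary vertex by chaining overlapping two-label spines. This chaining is invalid: Laurentness is not transitive under change of cluster. If every variable of the cluster at $t'$ is Laurent in the cluster at $t_0$, and $u$ is Laurent in the cluster at $t'$, it does \emph{not} follow that $u$ is Laurent in the cluster at $t_0$, since substituting Laurent polynomials into negative powers compounds denominators; this failure of transitivity is exactly why the Laurent phenomenon is a theorem rather than an observation. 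In \cite{FZ} the spine of the caterpillar is the \emph{entire} path from the fixed base to the target vertex, with arbitrary (non-repeating) edge labels, and the induction is always run relative to that single base, with the coprimality conditions imposed along all spine edges and legs; no composition of Laurentness across intermediate bases ever occurs. A second, smaller defect: you propose to show that ``the only possible denominator of $x'_k$ is a power of $x_k$'' and then to ``rule out a genuine pole along $x_k=0$.'' Monomial denominators are precisely what Laurent polynomials permit, so there is nothing to rule out there; what the UFD argument must exclude is a \emph{non-monomial} factor, coming from an exchange binomial $\prod x_i^{[b_{ik}]_+}+\prod x_i^{[-b_{ik}]_+}$, in the reduced denominator. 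The specialization trick you invoke does appear in \cite{FZ}, but its role is to establish coprimality of exchange binomials, not to forbid poles along coordinate hyperplanes. With the spine taken to be the full path, the base held fixed throughout the induction, and the coprimality statements aimed at the exchange binomials, your outline becomes the standard proof; as written, it does not close.
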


Thanks to the Laurent Phenomenon, the denominator of every cluster variable is well defined when expressed in reduced form. 

\begin{Exa} \label{exa-mutation}
Let $\Xi = \left ((x_1,x_2,x_3), {\tiny \left(\begin{array}{ccc}0&2&2\\-2&0&2\\ -2&-2&0\end{array}  \right) }\right )$ be the initial seed. The mutation in direction $3$ yields 
\[ \Xi (3) =  {\tiny \left ( \left ( x_1,x_2, \tfrac {x_1^2 x_2^2+1}{x_3} \right ), \left(\begin{array}{ccc}0&2&-2\\-2&0&-2\\ 2&2&0\end{array}  \right) \right )}. \] Applying the mutation in direction $2$ to $\Xi(3)$, we obtain 
\[ \Xi (32) =  {\tiny \left ( \left ( x_1,\tfrac{x_1^2 (1+x_1^2 x_2^2)^2 + x_3^2}{x_2 x_3^2}, \tfrac {x_1^2 x_2^2+1}{x_3} \right ), \left(\begin{array}{ccc}0&-2&-2\\2&0&2\\ 2&-2&0\end{array}  \right) \right )}. \]
More mutations produce
$$
\Xi((321)^4)=\left(\left(\tfrac{P_1}{x_1^{4895}x_2^{12816}x_3^{33552}},\tfrac{P_2}{x_1^{1870}x_2^{4895}x_3^{12816}},\tfrac{P_3}{x_1^{714}x_2^{1870}x_3^{4895}}\right),\tiny{\left(\begin{array}{ccc}0&2&2\\-2&0&2\\ -2&-2&0\end{array}  \right) }\right),
$$
where the corresponding quiver is acyclic. We apply the mutation in direction $2$ to obtain
$$
\Xi((321)^42)=\left(\left(\tfrac{P_1}{x_1^{4895}x_2^{12816}x_3^{33552}},\tfrac{P_4}{x_1^{7920}x_2^{20737}x_3^{54288}},\tfrac{P_3}{x_1^{714}x_2^{1870}x_3^{4895}} \right),  \tiny{\left(\begin{array}{ccc}0&-2&6\\2&0&-2\\ -6&2&0\end{array}  \right) }\right),
$$
where the corresponding quiver is cyclic. We calculate three more mutations:
$$
\Xi((321)^423)=\left(\left(\tfrac{P_1}{x_1^{4895}x_2^{12816}x_3^{33552}},\tfrac{P_4}{x_1^{7920}x_2^{20737}x_3^{54288}},\tfrac{P_5}{x_1^{28656}x_2^{75026}x_3^{196417}}\right),\tiny{\left(\begin{array}{ccc}0&10&-6\\-10&0&2\\ 6&-2&0\end{array}  \right)}\right),
$$
$$
\Xi((321)^4231)=\left(\left(\tfrac{P_6}{x_1^{167041}x_2^{437340}x_3^{1144950}},\tfrac{P_4}{x_1^{7920}x_2^{20737}x_3^{54288}},\tfrac{P_5}{x_1^{28656}x_2^{75026}x_3^{196417}}\right),\tiny{\left(\begin{array}{ccc}0&-10&6\\10&0&-58\\ -6&58&0\end{array}  \right)}\right),
$$
$$
\Xi((321)^42312)=\left(\left(\tfrac{P_6}{x_1^{167041}x_2^{437340}x_3^{1144950}},\tfrac{P_7}{x_1^{1662490}x_2^{4352663}x_3^{11395212}},\tfrac{P_5}{x_1^{28656}x_2^{75026}x_3^{196417}}\right),\tiny{\left(\begin{array}{ccc}0&10&-574\\-10&0&58\\ 574&-58&0\end{array}  \right)}\right).
$$
Here all $P_i$ are polynomials in $x_1,x_2,x_3$ with no monomial factors. Compare \eqref{root-example} with the new cluster variable $P_7/x_1^{1662490}x_2^{4352663}x_3^{11395212}$ in $\Xi((321)^42312)$.
\end{Exa}

\subsection{Positive real roots}
Let $A=(a_{ij})$ be the corresponding symmetric Cartan matrix  to $B$ 
and $\mathfrak g$ be the associated Kac--Moody algebra. The simple roots of $\mathfrak g$  will be denoted by $\alpha_i$, $i \in I$.  Let $Q_+ = \oplus_{i \in I} \mathbb Z_{\ge 0} \alpha_i$ be the positive root lattice. We have the canonical bilinear form $(\cdot ,\cdot )$ on $Q_+$ defined by  $(\alpha_i, \alpha_j) = a_{ij}$ for $i,j \in I$.
  The simple reflections will be denoted by $s_i$, $i \in I$ and the Weyl group by $W$. As before, let $\mathfrak W$ be the set of words $\mathfrak w=i_1i_2 \cdots i_k$ in the alphabet $I$ such that no two consecutive letters $i_p$ and $i_{p+1}$ are the same. 
Since $W \cong (\mathbb Z/ 2 \mathbb Z) * (\mathbb Z/ 2 \mathbb Z) * (\mathbb Z/ 2 \mathbb Z)$,  we regard $\mathfrak W$  as the set of reduced expressions of the elements of $W$.  Assume that $\mathfrak W$ also has the empty word $\emptyset$.

For $\mathfrak w = i_1 \cdots i_k \in \mathfrak W$, define $$s_{\mathfrak w}  := s_{i_1} \cdots s_{i_k} \in W.$$ 
A root $\beta\in Q_+$ is called {\em real} if $\beta = w \alpha_i$ for some $w \in W$ and $\alpha_i$, $i \in I$. For a positive real root $\beta$, the corresponding reflection will be denoted by $r_\beta \in W$.

\section{Real Schur roots of rank 3 quivers}

In this section we prove Conjecture~\ref{main_conj} for rank 3 quivers with multiple arrows between every pair of vertices. We describe the set of real roots by the isotopy classes of certain curves on the universal cover of a triangulated torus and characterize the curves corresponding to real Schur roots.

\subsection{Curves representing real roots}
For easier visualization,    we    restate the set-up from Section~ \ref{conj_subsection} in terms of the universal cover. 
Consider the following set of lines on $\mathbb{R}^2$:
$$\mathcal{T}=\mathcal{T}_1\cup \mathcal{T}_2\cup \mathcal{T}_3,$$ 
where $\mathcal{T}_1=\{(x,y)\ : \ y\in\mathbb{Z}\}$,  $\mathcal{T}_2=\{(x,y)\ : \ x+y\in\mathbb{Z}\}$, and  $\mathcal{T}_3=\{(x,y)\ : \  x\in\mathbb{Z}\}$. 
Together with $\mathcal{T}$, the space $\mathbb{R}^2$ can be viewed as  the universal cover of a triangulated torus.

We also define \[ \mathcal L_1 =\{(x,y)\, : \, x-y\in\mathbb{Z}, \quad x -\lfloor x \rfloor < \tfrac 1 2 \} \quad \text{ and } \quad \mathcal L_2 =\{(x,y)\, : \, x-y\in\mathbb{Z}, \quad x -\lfloor x \rfloor > \tfrac 1 2 \} .\]

\begin{Def}
An \emph{admissible} curve is a continuous function $\eta:[0,1]\longrightarrow \mathbb{R}^2$ such that

1) $\eta(x)\in \mathbb Z^2$ if and only if  $x\in\{0,1\}$;

2) there exists $\epsilon>0$ such that $\eta([0,\epsilon])\subset \mathcal L_1$ and $\eta([1-\epsilon,1))\subset \mathcal L_2$;

3) if $\eta(x)\in \mathcal{T}\setminus \mathbb Z^2$ then $\eta([x-\epsilon,x+\epsilon])$ meets $\mathcal{T}$ transversally for sufficiently small $\epsilon>0$;

4) and   $\upsilon(\eta)\in R$, where $\upsilon(\eta):={i_1}\cdots {i_k}$ is given by 
$$\{x\in(0,1) \ : \ \eta(x)\in \mathcal{T}\}=\{x_1<\cdots<x_k\}\quad \text{ and }\quad \eta(x_\ell)\in \mathcal{T}_{i_\ell}\text{ for }\ell\in\{1,...,k\}.$$ 
\end{Def}

\subsection{Curves representing real Schur roots} \label{sec-rep}
Let
$$
\mathcal{Z}:=\{(a,b,c)\in\mathbb{Z}^3\ :  \ \text{gcd}(|b|,|c|)=1\}, 
$$
where $\text{gcd}(0,0)=\infty$ and $\text{gcd}(x,0)=x$ for nonzero $x$. Fix $z=(a,b,c)\in \mathcal{Z}$ and let $$\epsilon=\left\{\begin{array}{ll}1/2, & \text{ if }\max(|b|,|c|)=1;\\
1/2\sqrt{b^2+c^2}, & \text{ otherwise.} \end{array} \right.$$

Let $C_{z,1}\subset \mathbb{R}^2$ be the spiral that (i) crosses the  positive $x$-axis $|a|$ times; (ii) starts with the line segment from $(0,0)$ to $(\epsilon/2,\epsilon/2)$, goes around $(0,0)$, and ends at $(\epsilon b,\epsilon c)$; and (iii) revolves clockwise if $a>0$ (resp. counterclockwise if $a<0$). Let $C_{z,2}$ be the line segment from $(\epsilon b,\epsilon c)$ to $(b-\epsilon b,c-\epsilon c)$, and $C_{z,3}$ be the spiral obtained by rotating $C_{z,1}$ by $180^{\circ}$ around $(b/2,c/2)$.  Let $\eta_z$ be the union of $C_{z,1}$, $C_{z,2}$ and $C_{z,3}$.   We are ready to state our main theorem as follows.

Let $\Gamma$ be the set of (isotopy classes of) admissible curves $\eta$ such that $\eta$ has no     self-intersections on the torus, i.e., $\eta(x_1)=\eta(x_2)\ (\text{mod } \mathbb{Z}\times\mathbb{Z})$ implies $x_1=x_2$ or $\{x_1,x_2\}=\{0,1\}$. It is not hard to see that $\Gamma=\{\eta_z\ : \ z\in \mathcal{Z}\}$ by using Dehn twists. Recall that $\beta(\eta)$ is defined in Conjecture \ref{main_conj} for $\eta \in \Gamma$.

\begin{Thm} \label{thm-main}
The set $\{ \beta(\eta_z)\, : \,  z\in\mathcal{Z}\}$ is precisely the set of real Schur roots for $\mathcal{Q}$.
\end{Thm}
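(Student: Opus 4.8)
The plan is to establish the theorem by showing two inclusions: every real Schur root arises as $\beta(\eta_z)$ for some $z \in \mathcal Z$, and conversely every $\beta(\eta_z)$ is a real Schur root. Because the correspondence is conjecturally mediated by cluster mutations (Example~\ref{exa-mutation} strongly suggests that the word $\upsilon(\eta_z)$ is exactly the mutation sequence producing the cluster variable whose denominator vector is $\beta(\eta_z)$), my first step would be to make this link precise. Specifically, I would prove that for a word $\mathfrak w \in R$ the positive real root $\beta$ attached to the reflection $s_{\mathfrak w}$ equals the denominator vector of the cluster variable obtained by applying $\mu_{\mathfrak w}$ to the initial seed $\Xi$, whenever $\mathfrak w$ is a \emph{reduced-in-mutations} (locally acyclic) sequence. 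This translates the combinatorial/geometric data of an admissible curve into cluster-theoretic data, where real Schur roots are known to be exactly the denominator vectors of cluster variables for acyclic $\mathcal Q$ (by the results of Caldero--Keller \cite{CK} cited in the introduction).

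Next I would analyze the geometry of the curves $\eta_z$ directly. Each $\eta_z$ consists of two symmetric spirals $C_{z,1}, C_{z,3}$ and a connecting segment $C_{z,2}$ of slope $c/b$; reading off the intersections with $\mathcal T_1, \mathcal T_2, \mathcal T_3$ gives the word $\upsilon(\eta_z)$. The key computation is to show that the spiral revolving $|a|$ times contributes a periodic block of the form $(s_3 s_2 s_1)^{|a|}$ or its inverse (matching the powers seen in \eqref{root-example} and Example~\ref{exa-mutation}), while the middle segment of slope $c/b$ contributes a palindromic word determined by the Stern--Brocot/continued-fraction expansion of $c/b$. I would verify that the resulting word lies in $R$ and that $s_{\upsilon(\eta_z)}$ is the reflection $r_{\beta}$ for the explicit root $\beta = \beta(\eta_z)$, thereby parametrizing the candidate roots by $(a,b,c)$.

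For the forward inclusion, I would take an arbitrary real Schur root $\beta$, use the acyclicity of $\mathcal Q$ to write it via a canonical mutation sequence (ordering mutations compatibly with the source--node--sink structure $1 \to 2 \to 3$), and then read that sequence back as a slope $c/b$ and a revolution count $a$, producing the $z$ with $\beta(\eta_z) = \beta$. For the reverse inclusion, I would show that no $\eta_z$ can correspond to an imaginary root: the gcd condition $\gcd(|b|,|c|)=1$ and the non-self-intersection of $\eta_z$ (established for all $z$ in the preceding discussion) force the associated word to be a genuine reflection word, hence a real root, and the mutation interpretation forces it to be a cluster variable denominator, hence Schur. The main obstacle I anticipate is the spiral bookkeeping: controlling exactly how the $|a|$ revolutions interleave with the slope data to produce the correct non-reduced expression for the reflection $r_\beta$, and confirming that distinct $z \in \mathcal Z$ give distinct roots (injectivity), which requires a careful isotopy argument on the torus via Dehn twists as already flagged before the theorem statement.
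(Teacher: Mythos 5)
Your overall architecture---identify real Schur roots with denominators of cluster variables via Caldero--Keller (Theorem \ref{thm-CK}), then translate mutation data into the spiral-plus-segment geometry of $\eta_z$---is the same as the paper's, but the bridge you propose between the two sides is false as stated, and that bridge is exactly where the work lies. You claim that the curve word $\upsilon(\eta_z)$ (an element of $R$) is the mutation sequence producing the cluster variable whose denominator is $\beta(\eta_z)$. These are two different words: in the paper's own running example, the root \eqref{root-example} has curve word $\upsilon(\eta)=(321)^42321232321232(123)^4$ (thirty-seven letters), while the mutation sequence producing it as a denominator is $(321)^42312$ (sixteen letters, Example \ref{exa-mutation}); applying $\mu_{\upsilon(\eta)}$ to the initial seed does not yield a cluster variable with denominator $\beta(\eta)$. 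Your side condition ``whenever $\mathfrak w$ is reduced-in-mutations (locally acyclic)'' does not rescue the claim but makes it nearly vacuous: since $W\cong(\mathbb Z/2\mathbb Z)^{*3}$ here, the words in $R$ are the odd-length palindromes in $\mathfrak W$, and the only palindromes all of whose prefixes keep the exchange matrix acyclic (i.e.\ lie in $\mathcal{C}_1\cup\mathcal{C}_3$) are the single letters $1$ and $3$. Note also that your worry about imaginary roots in the reverse inclusion is empty: $\beta(\eta_z)$ is real by construction, since admissibility already demands $\upsilon(\eta_z)\in R$; the actual content of that inclusion is Schur-ness, which again requires the (missing) link to mutations.

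The correct bridge, and the technical heart of the paper, is Theorem \ref{thm-mm}: writing a mutation word as $\hat{\mathfrak w}=\mathfrak w\mathfrak v$ with $\mathfrak w$ the longest acyclic prefix and $p$ the last letter, the denominator of the new cluster variable is $\beta_p(\hat{\mathfrak w})=\psi(\hat{\mathfrak w})=s_{\mathfrak w}\,\phi(\mathfrak v)$, where $\phi(\mathfrak v)$ is read off geometrically from the line segment along the vector $\vec{v_p}(\mathfrak v)$ of the vector-mutation $V(\mathfrak v)$. Proving this identity occupies all of Section \ref{sec-proof}: an induction organized into seven cases according to $\ell,\delta,\rho$, resting on root inequalities such as $c_{ij}\beta_j(\mathfrak w)\ge c_{ik}\beta_k(\mathfrak w)$ (Lemma \ref{lem-ineq}), spectral estimates for the powers $J^n$ of $2\times 2$ mutation matrices, compatibility statements like $c_{pq}(\hat{\mathfrak w})=(\beta_p(\hat{\mathfrak w}),\beta_q(\hat{\mathfrak w}))$ (Lemma \ref{lem-bibi}), and the concatenation/triangle-area argument of Lemma \ref{lem-cru}. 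Your proposal contains no substitute for any of this. The purely geometric dictionary you describe---spiral $\leftrightarrow$ revolution count $a$ and acyclic prefix, segment $\leftrightarrow$ Stern--Brocot/Farey expansion of the slope---is correct and is essentially the paper's actual proof of Theorem \ref{thm-main}, but that proof is short precisely because it takes Theorem \ref{thm-mm} as input; without it (or an equivalent denominator formula) the curves never get connected to Schur roots.
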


Clearly, the above theorem implies that Conjecture \ref{main_conj} holds for rank 3 quivers with multiple arrows between every pair of vertices. We will prove this theorem after we state Theorem \ref{thm-mm} below in Section \ref{subsec-clus}.

\subsection{Mutations of vectors and the definition of $\psi(\mathfrak w)$} \label{subsec-phi}
$\text{ }$\\
We define the triple $V(\mathfrak{w})$ of vectors on $\mathbb{R}^2$ for $\mathfrak w \in \mathfrak W\setminus\{\emptyset\}$ as follows.
First we define
\begin{align*}
V(1) &= (\langle -1,2 \rangle, \langle-1,1 \rangle, \langle0,1\rangle) ,\\
V(2) &= ( \langle 0,1 \rangle, \langle 1, 1 \rangle, \langle 1,0 \rangle ) ,\\
V(3) &= (\langle 1,0\rangle, \langle 1,-1 \rangle, \langle 2,-1 \rangle) .
\end{align*}
{Then} we inductively define $V(i_1\cdots i_q)$ for $q > 1$. Suppose that $V(i_1\cdots i_{q-1})=(\vec{v_1},\vec{v_2},\vec{v_3})$. Then $V(i_1\cdots i_q)$ is defined by
\begin{equation}\label{V_ind}
V(i_1\cdots i_q):=\left\{\begin{array}{ll}
(\vec{v_2}+\vec{v_3},\vec{v_2},\vec{v_3}), &\text{ if }i_q=1;\\  
(\vec{v_1},\vec{v_1}+\vec{v_3},\vec{v_3}), &\text{ if }i_q=2;\\  
(\vec{v_1},\vec{v_2},\vec{v_1}+\vec{v_2}), &\text{ if }i_q=3.\\  
 \end{array} \right.
\end{equation}

Write $V(\mathfrak{w})=(\vec{v_1}(\mathfrak w),\vec{v_2}(\mathfrak w),\vec{v_3}(\mathfrak w))$. Let $p\in\{1,2,3\}$ and suppose that $\vec{v_{p}}(\mathfrak w)=\langle b,c\rangle$. Let $\xi$ be (the isotopic class of) the line segment  from $(0,0)$ to $(b,c)$.  One can see that $\gcd(|b|,|c|)=1$ and so $\xi \in \Gamma$. We write 
\begin{equation} \label{eqn-ups} \beta(\vec{v_{p}}(\mathfrak w))= \beta(\xi) \ \text{ and } \ \upsilon(\vec{v_{p}}(\mathfrak w))= \upsilon(\xi)\in R.
\end{equation} 
For each $\mathfrak{w} \in\mathfrak W\setminus\{\emptyset\}$, we define a positive real root $\phi(\mathfrak w)$ by  
\[ \phi(\mathfrak w) := \beta(\vec{v_p}(\mathfrak w)),\]
where $p$ is the last letter of $\mathfrak{w} \in\mathfrak W\setminus\{\emptyset\}$.

\begin{Lem} For $\{i,j,k \} = \{1,2,3\}$, we have  
\begin{equation} \label{eqn-iji}  \phi ( (ij)^n) = (s_i s_j)^{n-1}s_i\alpha_j \quad \text{ and }  \quad \phi ( (ij)^ni) = (s_i s_j)^n \alpha_i ;\end{equation}
\begin{equation} \label{eqn-ijik}
\phi(i(ji)^n k)  = s_i (s_js_i)^{2n} \alpha_k \ \text{ and } \  \phi(i(ji)^n  j k)  = s_i (s_js_i)^{2n+1} \alpha_k ;
\end{equation}
\begin{align} \label{eqn-ijiki}
\phi(i(ji)^n ki) & = s_i (s_js_i)^{2n} s_k (s_is_j)^{n} \alpha_i, \\ \phi(i(ji)^n k j) & = s_i (s_js_i)^{2n} s_k (s_is_j)^{n} s_i \alpha_j,  \label{eqn-ijiki-1 } \\
\phi(i(ji)^n j k i) & = s_i (s_js_i)^{2n+1} s_k (s_is_j)^{n+1} \alpha_i , \label{eqn-ijiki-2} \\ \phi(i(ji)^n  j k j) &= s_i (s_js_i)^{2n+1} s_k (s_is_j)^{n} s_i \alpha_j. \label{eqn-ijiki-3}
\end{align}

\end{Lem}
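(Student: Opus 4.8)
The plan is to prove all seven identities by a common mechanism: translate the combinatorial action of appending a letter to $\mathfrak{w}$ (which is encoded by the linear recursion \eqref{V_ind} on the vector triple $V(\mathfrak{w})$) into the corresponding reflection word, and then read off the last vector $\vec{v_p}(\mathfrak{w})$ as a root via the map $\beta$. The key observation I would establish first is a \emph{compatibility lemma}: the three mutation rules in \eqref{V_ind} mirror the simple reflections $s_1,s_2,s_3$ acting on roots. Concretely, I expect that $\beta(\vec{v_p}(\mathfrak{w}))$, when $p$ is the appended letter, is obtained from the roots recorded by $V(\mathfrak{w})$ by applying $s_p$ (or a product built from the earlier letters). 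So the strategy reduces the geometric/combinatorial statement to a purely Coxeter-theoretic bookkeeping computation in $W \cong (\mathbb{Z}/2)*(\mathbb{Z}/2)*(\mathbb{Z}/2)$.

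The concrete steps, in order, are as follows. First I would verify the base cases by hand: from the explicit triples $V(1),V(2),V(3)$ one checks $\beta(\langle 0,1\rangle)=\alpha_2$, $\beta(\langle 1,0\rangle)=\alpha_3$, $\beta(\langle 1,-1\rangle)$, etc., matching the right-hand sides of \eqref{eqn-iji}--\eqref{eqn-ijiki-3} at the smallest value of $n$. Second, I would prove by induction on word length that appending letter $i_q=p$ updates the last entry to $\vec{v_p} = \vec{v_j}+\vec{v_k}$ (the two other entries), and that under $\beta$ this addition of vectors corresponds on the root side to the reflection $s_p$ acting on a suitable root already in the list; this is where the form $(s_is_j)^n$ emerges, since iterating the rule for alternating letters $ij$ produces a power. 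Third, having the induction available, I would treat the six families \eqref{eqn-ijik}--\eqref{eqn-ijiki-3} as concatenations: a prefix $i(ji)^n$ (or $i(ji)^nj$) that builds up $s_i(s_js_i)^{2n}$ by the alternating rule, followed by a short suffix ($k$, $jk$, $ki$, $kj$, $jki$, $jkj$) that I analyze directly using \eqref{V_ind} one letter at a time, tracking how the third generator $s_k$ and the return product $(s_is_j)^{\bullet}$ are produced.

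The main obstacle I anticipate is the \textbf{bookkeeping of which vector sits in which slot} across the recursion \eqref{V_ind}, because the rule permutes the roles of the three entries depending on the appended letter, and an off-by-one in the exponent of $(s_js_i)$ would be easy to introduce. In particular the jump from exponent $2n$ in \eqref{eqn-ijiki} to $2n+1$ in \eqref{eqn-ijiki-2}, and the asymmetry between the $\alpha_i$ and $s_i\alpha_j$ endings, must be pinned down by carefully distinguishing the cases where the alternating prefix ends in $i$ versus $j$. I would control this by maintaining throughout the induction an explicit invariant expressing each of $\vec{v_1},\vec{v_2},\vec{v_3}$ (not just the last one) as a concrete Weyl-group image of a simple root, so that each appended letter's effect is determined unambiguously.

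Once the compatibility lemma and the invariant are in place, each of the seven identities follows by substituting the appropriate word into the invariant and simplifying the resulting reflection expression; I expect these final simplifications to be routine applications of $s_p^2=1$ and the braid-free structure of $W$, so I would state them briefly rather than expand every product. A useful cross-check, which I would mention, is that the explicit example in \eqref{root-example} together with the cluster-variable computation in Example \ref{exa-mutation} matches the prediction of \eqref{eqn-ijiki-3}-type formulas for the word $(321)^42321232321232(123)^4$, giving independent numerical confirmation of the exponent bookkeeping.
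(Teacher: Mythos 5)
Your route departs from the paper's: the paper proves this lemma by direct verification (compute $V(\mathfrak w)$ in closed form from \eqref{V_ind} for each of the seven families, read off the crossing word of the line segment to the last-letter vector, and identify the root), omitting the details and illustrating only with $\mathfrak w = 121212$. Your plan instead funnels everything through a ``compatibility lemma'' plus induction, and as written it has a genuine gap. The compatibility statement you posit is wrong in its precise form, and the precise form is exactly what drives the exponent bookkeeping you are worried about. Appending a letter does \emph{not} correspond to applying the simple reflection $s_p$ to a root already recorded by $V(\mathfrak w)$: the correct statement (which the paper proves later, as Lemma \ref{lem-cru}, for use in the proof of Theorem \ref{thm-mm}) is $\upsilon (\vec{v_p}(\mathfrak vp))\, \beta(\vec{v_q}(\mathfrak vp)) = - \beta(\vec{v_q}(\mathfrak v pq))$, i.e.\ the new root is the image of the old one under the reflection in the \emph{pivot root} $\beta(\vec{v_p}(\mathfrak vp))$ --- a conjugate $u s_p u^{-1}$, not $s_p$ itself --- together with a sign change. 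Concretely, $\beta(\vec{v_1}(121)) = s_1s_2\alpha_1$, which is not $s_1$ applied to any of $\alpha_1$, $s_1\alpha_2$, $-\alpha_3$ (the roots attached to $V(12)$), but equals $-r_{s_1\alpha_2}\,\alpha_1$. Relatedly, your proposed invariant ``each of $\vec{v_1},\vec{v_2},\vec{v_3}$ is a Weyl-group image of a simple root'' breaks at the base: the never-updated entries such as $\vec{v_2}(1)=\langle -1,1\rangle$ and $\vec{v_3}(1)=\langle 0,1\rangle$ lie along lines of $\mathcal T$, have no admissible-curve reading, and consistency with the recursion forces them to be \emph{negative} simple roots; without tracking signs the induction does not close.

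More fundamentally, your claim that the strategy ``reduces the geometric/combinatorial statement to a purely Coxeter-theoretic bookkeeping computation'' cannot be realized, and the compatibility lemma cannot be proven ``by induction on word length'': it compares the lattice-geometric map $\beta$ of \eqref{eqn-ups} (crossing words of line segments) with vector addition, so a geometric argument is unavoidable. The paper supplies exactly this in the proof of Lemma \ref{lem-cru}: the triangle formed by $\vec{v_p}(\mathfrak vp)$, $\tfrac 12 \vec{v_q}(\mathfrak vpq)$ and $\tfrac 12 \vec{v_q}(\mathfrak vp)$ has area $\tfrac 14$, hence no interior lattice points, so the crossing words concatenate, and a word-reversal cancellation finishes. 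Your proposal hides all of the geometry inside the unproven compatibility lemma; once you actually supply that argument you have reproduced Lemma \ref{lem-cru}, and at that point the direct check the paper performs --- closed-form vectors for each family, whose crossing words have an evident alternating pattern provable by induction on $n$ --- is no harder and is what ``straightforward to check directly'' means here.
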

\begin{proof}
It is straightforward to check these identities directly, so we omit the proof. Instead we illustrate the calculation for $\mathfrak w=121212$.

By \eqref{V_ind}, we get
$$
\begin{array}{ccc}
V(1)&=&(\langle-1,2\rangle, \langle-1,1 \rangle, \langle 0,1\rangle),\\
V(12)&=&(\langle-1,2\rangle, \langle-1,3 \rangle, \langle 0,1 \rangle),\\
\vdots & &\vdots\\
V(121212)&=&(\langle -1,6\rangle, \langle -1,7 \rangle, \langle 0,1 \rangle).
\end{array}
$$
Let $\xi$  be the line segment from $(0,0)$ to $(-1, 7)$.
Then $\upsilon(\xi)=(12)^51$, hence the corresponding real root is 
\[  \phi(121212)= \beta(\xi)= \beta(\langle-1,7 \rangle)= s_1s_2s_1s_2s_1\alpha_2. \]
\end{proof}

Let $\mathcal{C}_1=\{1,12,123,1231,12312,123123,...\}$ and $\mathcal{C}_3=\{3,32,321,3213,32132,321321,...\}\subset \mathfrak{W}$. Note that the quiver corresponding to $B(\mathfrak w)$ is acyclic if and only if $\mathfrak w\in\mathcal{C}_1\cup\mathcal{C}_3\cup\{\emptyset\}$. 
For $\mathfrak w=i_1\cdots i_k\in\mathfrak W$, let $\ell(\mathfrak w)=k$ and   $$\rho(\mathfrak w):=\left\{\begin{array}{ll}0, &\text{ if }\mathfrak w\text{ is the empty word }\emptyset,\text{ or }i_1=2;\\
\max\{p\, : \, i_1\cdots i_p\in \mathcal{C}_1\cup\mathcal{C}_3\}, &\text{ otherwise.}\end{array}\right.$$

The following definition is important for the rest of the paper.

\begin{Def}
Let $\hat{\mathfrak w} \in \mathfrak W \setminus \{ \emptyset \}$, and write 
$\hat{\mathfrak w}=\mathfrak w \mathfrak v \in \mathfrak W$ with the word $\mathfrak w$ being the longest word such that $B(\mathfrak w)$ is acyclic. Assume that $\mathfrak w = i_1 \dots i_k$. Then we have $k=\rho(\hat{\mathfrak w})$. Define a positive real root $\psi(\hat{\mathfrak w})$ by
\[ \psi(\hat{\mathfrak w}) := \begin{cases} s_{i_1} \cdots s_{i_{k-1}} \alpha_{i_k} & \text{if }\mathfrak v = \emptyset; \\ s_{\mathfrak w} \phi(\mathfrak v) & \text{otherwise}. \end{cases} \]
\end{Def}

\begin{Exa} \label{exa-sub}
Let $\mathcal{Q}$ be the following rank 3 acyclic quiver and $B$ be the corresponding skew-symmetric matrix:
 \begin{center}
 \raisebox{-0.5 cm}{\begin{tikzpicture}[scale=0.6]
 \node at (0,0){\tiny{$1$}};
\node at (2.8,0){\tiny{$3$}};
\node at (1.4,1.5){\tiny{$2$}};
\draw [->, thick] (1.6,1.3)--(2.5,0.2);
\draw [->, thick] (1.7,1.4)--(2.6,0.3);
\draw [->, thick] (0.2,0.3)--(1.1,1.4);
\draw [->, thick] (0.3,0.2)--(1.2,1.3);
\draw [->, thick] (0.3,0.05)--(2.5,0.05);
\draw [->, thick] (0.3,-0.1)--(2.5,-0.1);
\end{tikzpicture}}, \qquad $B={\scriptsize \left(\begin{array}{ccc}0&2&2\\-2&0&2\\ -2&-2&0\end{array}  \right)}$.
\end{center}

Consider $\hat{\mathfrak w} = (321)^42132 \in \mathfrak W$. Then $\mathfrak w=(321)^4$ and $\mathfrak v=2132$. One easily obtains \[ V(\mathfrak v)=V(2132) = (\langle 2,1 \rangle, \langle 5,3 \rangle, \langle 3,2 \rangle) .\] Thus $\vec{v_2}(\mathfrak v)= \langle 5,3 \rangle$. By recording the intersections of the line segment $\xi$ from $(0,0)$ to $(5,3)$ with $\mathcal T$, we have
$\upsilon(\xi)=2321232321232$ and $$\phi(\mathfrak v)=\beta(\xi) = s_2s_3s_2s_1s_2s_3\alpha_2.$$
Combining these, we obtain
\[ \psi(\hat{\mathfrak w})= s_{\mathfrak w} \phi(\mathfrak v) = (s_3s_2s_1)^4s_2s_3s_2s_1s_2s_3\alpha_2
= 1662490\alpha_1+4352663\alpha_2+11395212\alpha_3 .\]
This real root was considered in Example \ref{ex-main} (2). The word $\mathfrak w$ corresponds to the spirals and $\mathfrak v$ to the line segment  $\xi$.
\end{Exa}

\subsection{Denominators of cluster variables} \label{subsec-clus}

Consider the cluster variables associated to the initial seed $\Xi=((x_1, x_2, x_3), B)$. The denominator of a non-initial cluster variable will be identified with an element of the positive root lattice $Q_+$ through \begin{equation} \label{eqn-bij} x_1^{m_1}x_2^{m_2}x_3^{m_3} \longmapsto m_1 \alpha_1 + m_2 \alpha_2 +m_3 \alpha_3, \qquad m_i \in \mathbb Z_{\ge 0}, \quad i \in I . \end{equation} The denominators of the initial cluster variables $x_1, x_2, x_3$ correspond to $-\alpha_1, -\alpha_2, -\alpha_3$, respectively.

\begin{Thm}[\cite{CK}] \label{thm-CK}
The correspondence \eqref{eqn-bij} is a bijection between the set of denominators of cluster variables, other than $x_i$, $i \in I$, and the set of positive real Schur roots of $\mathcal Q$. 
\end{Thm}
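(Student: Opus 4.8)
The plan is to derive this statement from the additive categorification of the cluster algebra by the cluster category, following Caldero--Chapoton and Caldero--Keller. Let $\mathcal C = \mathcal C_{\mathcal Q}$ be the cluster category, that is, the orbit category $D^b(\mathrm{mod}\,k\mathcal Q)/\langle\tau^{-1}[1]\rangle$, where $\tau$ is the Auslander--Reiten translate. Because $\mathcal Q$ is acyclic, every indecomposable object of $\mathcal C$ is either an indecomposable $k\mathcal Q$-module or one of the shifted projectives $P_1[1],P_2[1],P_3[1]$. The first step is to introduce the Caldero--Chapoton cluster character $X_{?}$, assigning to a module $M$ with $\mathbf d=\underline{\dim}\,M$ the Laurent polynomial
\begin{equation*} X_M=\sum_{\mathbf e}\chi\big(\mathrm{Gr}_{\mathbf e}(M)\big)\,\prod_{i\in I}x_i^{-\langle\mathbf e,\alpha_i\rangle-\langle\alpha_i,\mathbf d-\mathbf e\rangle}, \end{equation*}
where $\mathrm{Gr}_{\mathbf e}(M)$ is the quiver Grassmannian of submodules of dimension vector $\mathbf e$, $\chi$ is the topological Euler characteristic, and $\langle\,,\,\rangle$ is the (non-symmetric) Euler form of $k\mathcal Q$; one normalizes by $X_{P_i[1]}=x_i$. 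A short check on $\mathrm{mod}\,k\mathcal Q$ (e.g. on the simples) confirms that this recovers the expected shape of cluster variables.

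The core step is the \emph{cluster multiplication formula} of Caldero--Keller: whenever $L,M$ are objects of $\mathcal C$ with $\dim\mathrm{Ext}^1_{\mathcal C}(L,M)=1$ and $L\to E\to M\to L[1]$, $M\to E'\to L\to M[1]$ are the two non-split exchange triangles, one has $X_L\,X_M=X_E+X_{E'}$. This is proved by a dévissage on quiver Grassmannians, comparing $\chi(\mathrm{Gr}(E))$ and $\chi(\mathrm{Gr}(E'))$ with $\chi(\mathrm{Gr}(L))\chi(\mathrm{Gr}(M))$ through the long exact sequences attached to the triangles. It is the technical heart of the argument and the \textbf{main obstacle}, since for a quiver with multiple arrows the module category is wild and the Grassmannians are not governed by any finite Auslander--Reiten combinatorics. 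Granting this, an induction on mutation sequences matches the exchange relation \eqref{exchange relation} with the multiplication formula, so that $M\mapsto X_M$ carries the indecomposable rigid objects of $\mathcal C$ bijectively onto the cluster variables, the shifted projectives $P_i[1]$ accounting exactly for the initial variables $x_i$.

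It remains to compute denominators and identify the image. From the Caldero--Chapoton formula one obtains the \emph{denominator theorem}: for an indecomposable module $M$ the Laurent polynomial $X_M$, written in reduced form, has denominator $x_1^{d_1}x_2^{d_2}x_3^{d_3}$ with $\mathbf d=\underline{\dim}\,M$. Concretely, one shows that for each $i$ the minimum over submodule dimension vectors $\mathbf e$ of the exponent $-\langle\mathbf e,\alpha_i\rangle-\langle\alpha_i,\mathbf d-\mathbf e\rangle$ equals $-d_i$ and is attained without cancellation in the numerator; the resulting monomial $x^{\mathbf d}$ is precisely the one sent to $\underline{\dim}\,M$ under \eqref{eqn-bij}. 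Finally, for a module $M$ there is an isomorphism $\mathrm{Ext}^1_{\mathcal C}(M,M)\cong\mathrm{Ext}^1_{k\mathcal Q}(M,M)\oplus D\,\mathrm{Ext}^1_{k\mathcal Q}(M,M)$, so the modules contributing cluster variables are exactly the rigid indecomposable $k\mathcal Q$-modules. By Kac's theorem each positive real root carries a unique indecomposable representation, and this module is rigid (equivalently a brick) precisely when the root is a real Schur root; hence $M\mapsto\underline{\dim}\,M$ is a bijection from rigid indecomposable modules onto positive real Schur roots. Composing these identifications, the correspondence \eqref{eqn-bij} restricts to a bijection between the denominators of the non-initial cluster variables and the positive real Schur roots of $\mathcal Q$, as claimed.
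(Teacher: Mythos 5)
Your outline is correct, but note that the paper does not prove this statement at all: it is quoted verbatim from Caldero--Keller \cite{CK}, and your proposal is precisely a faithful sketch of the argument in that reference (cluster category, Caldero--Chapoton character, the $\dim\mathrm{Ext}^1_{\mathcal C}(L,M)=1$ multiplication formula, the denominator theorem for rigid indecomposables, and the identification $\mathrm{Ext}^1_{\mathcal C}(M,M)\cong\mathrm{Ext}^1_{k\mathcal Q}(M,M)\oplus D\,\mathrm{Ext}^1_{k\mathcal Q}(M,M)$ combined with Kac's theorem). So the proposal matches the proof the paper implicitly relies on, with the only cosmetic caveat that the denominator statement need only be established for rigid indecomposables, which is all your argument actually uses.
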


For any $\mathfrak w \in \mathfrak W\setminus\{\emptyset\}$, let $\Xi(\mathfrak w)$ be the labeled seed obtained from the initial seed $\Xi$ by the sequence $\mu_{\mathfrak w}$ of mutations. We denote by $(\beta_1(\mathfrak w),\beta_2(\mathfrak w), \beta_3(\mathfrak w))$ the triple of real Schur roots (or negative simple roots) obtained from the denominators of the cluster variables in the cluster of $\Xi(\mathfrak w)$.

\begin{Exa}
In Example \ref{exa-mutation}, we obtain the triple of real Schur roots from $\Xi((321)^42312)$:
\begin{align*}
\beta_1((321)^42312)&=   167041 \, \alpha_1+{437340} \, \alpha_2 + {1144950}\,  \alpha_3, \\
\beta_2((321)^42312) &=1662490 \, \alpha_1 + {4352663}\,  \alpha_2 + {11395212} \, \alpha_3,  \quad \quad (\text{cf. Example}~\ref{exa-sub})\\
\beta_3((321)^42312) &={28656} \, \alpha_1+ {75026}\, \alpha_2 + {196417}\,  \alpha_3 .
\end{align*}
\end{Exa}

Now we state a description of the real Schur roots associated with the denominators of cluster variables, using sequences of simple reflections. 
\begin{Thm} \label{thm-mm}
Let $\hat{\mathfrak w} \in \mathfrak W\setminus\{\emptyset\}$.
If $p$ is the last letter of $\hat{\mathfrak w}$, then we have
\begin{equation} \label{eqn-mmm} \beta_p (\hat{\mathfrak w}) =  \psi(\hat{\mathfrak w}).
\end{equation}
\end{Thm}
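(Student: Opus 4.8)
The plan is to prove \eqref{eqn-mmm} by induction on $\ell(\hat{\mathfrak w})$, after strengthening the statement so that it tracks the entire labeled seed rather than a single cluster variable. Concretely, I would prove simultaneously that, whenever $\mathfrak v \neq \emptyset$, the full triple of denominator roots is
\[ (\beta_1(\hat{\mathfrak w}),\beta_2(\hat{\mathfrak w}),\beta_3(\hat{\mathfrak w})) = \big(s_{\mathfrak w}\,\beta(\vec{v_1}(\mathfrak v)),\, s_{\mathfrak w}\,\beta(\vec{v_2}(\mathfrak v)),\, s_{\mathfrak w}\,\beta(\vec{v_3}(\mathfrak v))\big), \]
together with an explicit description of the exchange matrix $B(\hat{\mathfrak w})$ in terms of the mutated triple $V(\mathfrak v)$ (with the corresponding reflection expressions when $\mathfrak v=\emptyset$). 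Taking $p$ to be the last letter of $\hat{\mathfrak w}$ then recovers $\beta_p(\hat{\mathfrak w}) = s_{\mathfrak w}\phi(\mathfrak v) = \psi(\hat{\mathfrak w})$. Carrying the whole triple and the matrix is forced by the mechanics of a single mutation $\mu_q$: it changes only the $q$-th variable, and the new denominator is governed by the d-vector recurrence $\beta_q \mapsto -\beta_q + \max\!\big(\sum_i [b_{iq}]_+\beta_i,\ \sum_i[-b_{iq}]_+\beta_i\big)$ (componentwise maximum), which one derives from the exchange relation \eqref{exchange relation} and Theorem \ref{Laurent}, so both the two unchanged roots and the current entries $b_{iq}$ are needed to run the step.

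The inductive step splits into three regimes according to where $\hat{\mathfrak w}q$ sits relative to the maximal acyclic prefix. If $\hat{\mathfrak w}$ and $\hat{\mathfrak w}q$ both lie in $\mathcal{C}_1\cup\mathcal{C}_3$ (so $\mathfrak v=\emptyset$), the mutation is a sink/source mutation of an acyclic seed and the resulting denominators are the preprojective/preinjective roots $s_{i_1}\cdots s_{i_{k-1}}\alpha_{i_k}$; this classical correspondence between Coxeter mutations and reflections supplies both the base cases and, later, the initialization. If $\hat{\mathfrak w}$ is acyclic while $\hat{\mathfrak w}q$ is cyclic (the transition), $\mathfrak v$ passes from $\emptyset$ to the single letter $q$, and I would verify by direct computation that the seed at this instant matches $V(q)$ twisted by $s_{\mathfrak w}$, using that $\mathfrak w$ is a prefix of $(123)^\infty$ or $(321)^\infty$ so that $B(\mathfrak w)$ has a completely explicit acyclic shape. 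The substantive case is the cyclic-to-cyclic step, where $\mathfrak v \mapsto \mathfrak v q$ and \eqref{V_ind} replaces $\vec{v_q}$ by the sum of the other two vectors; here I would feed the explicit formulas \eqref{eqn-iji}--\eqref{eqn-ijiki-3} for $\phi$ into the recurrence to identify $s_{\mathfrak w}\beta(\vec{v_i}+\vec{v_j})$ as the precise reflection expression it produces.

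The heart of the matter, and the step I expect to be the main obstacle, is showing that the three-dimensional d-vector recurrence collapses exactly onto the two-dimensional rule $\vec{v_q}\mapsto \vec{v_i}+\vec{v_j}$ of \eqref{V_ind}. Two things must be controlled. First, one must propagate a precise description of $B(\hat{\mathfrak w})$ through the cyclic regime via \eqref{eq:matrix-mutation}; its entries grow without bound (reaching $574$ already in Example \ref{exa-mutation}), so they are not read off from the symmetric Cartan form and must be tracked in tandem with the roots. Second, the signs of the entries $b_{iq},b_{jq}$ decide which of the two monomials in \eqref{exchange relation} carries the dominant denominator, i.e.\ which term wins the componentwise maximum; proving that the winner always realizes $s_{\mathfrak w}\beta(\vec{v_i}+\vec{v_j})$, uniformly over the cases, is the delicate point. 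One must also rule out cancellation in the numerator at each step (so that the d-vector equals the maximum rather than something smaller), which I would justify using Theorem \ref{thm-CK} together with the fact that the root obtained is genuinely a real Schur root.

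Finally, I would arrange the bookkeeping so that the underlying geometric mechanism is transparent: the vectors $\vec{v_1},\vec{v_2},\vec{v_3}$ behave as the three sides of a triangle on the triangulated torus, a mutation flips one diagonal $\vec{v_q}$ to the other $\vec{v_i}+\vec{v_j}$, and the associated roots obey a skein-type relation $\beta(\vec{v_i}+\vec{v_j}) = -\beta(\vec{v_q}) + (\text{linear combination of } \beta(\vec{v_i}),\beta(\vec{v_j}))$ that mirrors the exchange relation. Recognizing the d-vector recurrence as precisely this flip is what links the cluster-algebraic side (Theorem \ref{thm-CK}) to the curve side and closes the induction.
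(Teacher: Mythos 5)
Your proposal is correct and is essentially the paper's own proof: the paper likewise strengthens the induction to carry the whole triple of denominator roots together with the exchange-matrix entries, resolves the componentwise maximum in the d-vector recurrence by inequalities proved inductively (the $[\cdot]$-bookkeeping of Lemmas \ref{lem-ineq}, \ref{ppa}, \ref{pqa}, \ref{lem-wwaa}, \ref{lem-wwab}, \ref{lem-fin}), splits into cases according to the maximal acyclic prefix (its Cases 1--7 refine your three regimes, with the explicit formulas \eqref{eqn-iji}--\eqref{eqn-ijiki-3} used exactly where you propose), and closes the induction with precisely your flip/skein relation, realized as Lemma \ref{lem-cru} combined with the identity $c_{pq}(\hat{\mathfrak w})=(\beta_p(\hat{\mathfrak w}),\beta_q(\hat{\mathfrak w}))$ (Lemma \ref{lem-bibi}), which converts the recurrence $-\beta_q+c_{pq}\beta_p$ into the reflection $-r_{\beta_p}(\beta_q)$. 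What you flag as the delicate point --- proving uniformly which monomial wins the maximum --- is indeed where the bulk of the paper's technical effort (the $J^n$-matrix estimates and induction on the word structure) is spent.
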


This theorem will be proved in Section \ref{sec-proof}. Assuming this theorem, we now prove Theorem \ref{thm-main}.
\begin{proof}[Proof of Theorem \ref{thm-main}] 
By Theorems \ref{thm-CK} and \ref{thm-mm}, we have only to prove that there exists a one-to-one correspondence $\hat{\mathfrak w}=\mathfrak w \mathfrak v \in \mathfrak W \setminus \{ \emptyset \} \longmapsto z=(a,b,c) \in \mathcal Z$ such that $\psi(\hat{\mathfrak w}) = \beta (\eta_z)$, where the word $\mathfrak w$ is the longest word such that $B(\mathfrak w)$ is acyclic. By definition, we have $\mathfrak w \in \mathcal{C}_1\cup\mathcal{C}_3$, and it determines the spiral $C_{1}$ (and $C_{3}$) and the number $a$. Next consider the vector  $\vec{v_{p}}(\mathfrak v)=\langle b',c'\rangle$  and determine the sign for $\langle b,c \rangle =\pm \langle b', c' \rangle$ so that the line segment  $C_{2} \in \Gamma$ from $(\epsilon b,\epsilon c)$ to $(b-\epsilon b,c-\epsilon c)$  is connected to the spiral $C_{1}$ (and $C_{3}$) for sufficiently small $\epsilon>0$. Then we set $z=(a,b,c) \in \mathcal Z$ and define $\eta_z$ to be  the union of $C_{1}, C_{2}$ and $C_{3}$.

Conversely, given $z =(a,b,c) \in \mathcal Z$, we have the unique curve $\eta_z$ consisting of $C_{z,1}, C_{z,2}$ and $C_{z,3}$ by definition. The spiral $C_{z,1}$ determines $\mathfrak w \in \mathfrak W$ by simply recording the consecutive intersections of $C_{z,1}$ with $\mathcal T_p$, $p=1,2,3$. Since $\gcd (|b|,|c|)=1$, the line segment $C_{z,2}$ or the vector $\langle b, c \rangle$ determines a unique $\mathfrak v \in \mathfrak W$  such that $\vec{v_p}(\mathfrak v)= \pm \langle b, c \rangle$ where $p$ is the last letter of $\mathfrak v$.  Namely, one can associate  a Farey triple with $V(\mathfrak u)=(\vec{v_1}(\mathfrak u),\vec{v_2}(\mathfrak u),\vec{v_3}(\mathfrak u))$ by taking the ratio of two coordinates of each $\vec{v_i}(\mathfrak u)$, $i=1,2,3$, for each $\mathfrak u \in \mathfrak W$ and use the Farey tree (or the Stern--Brocot tree) to find $\mathfrak v$ (cf. \cite[pp. 52-53]{Ai}). Then we set $\hat{\mathfrak w} = \mathfrak w \mathfrak v$. 
This establishes the inverse of the map $\hat{\mathfrak w} \in \mathfrak W \setminus \{ \emptyset \} \longmapsto z \in \mathcal Z$.
\end{proof}

\section{Proof of Theorem \ref{thm-mm}} \label{sec-proof}

This section is devoted to a proof of Theorem \ref{thm-mm}. 
Recall that $\ell(\hat{\mathfrak w})=k$ and $\rho(\hat{\mathfrak w})=\max\{p\, : \, i_1\cdots i_p\in \mathcal{C}_1\cup\mathcal{C}_3\}$ for $\hat{\mathfrak w}=i_1\cdots i_k\in \mathfrak W$.  Note that  $\rho(\hat{\mathfrak w}) = \max \{ p \, : \, B(i_1 \dots i_p) \text{ is acyclic} \}$.
It is easy to check \eqref{eqn-mmm} if $\ell(\hat{\mathfrak w})=1$, so we assume that $\ell(\hat{\mathfrak w})\geq 2$.   Let  $$\delta(\hat{\mathfrak w}):=\max\{p\ : \  q+1\leq p\leq\ell(\hat{\mathfrak w})\text{ and }  i_{q}i_{q+1}\cdots i_p\text{ consists of two letters}\},$$where $q=\max(1,\rho(\hat{\mathfrak w})-1)$. We also let $\mathfrak w=i_1\cdots i_{\rho(\hat{\mathfrak w})}$.
 
We have $\ell(\hat{\mathfrak w})\ge\delta(\hat{\mathfrak w})\ge\rho(\hat{\mathfrak w})$ by definition. We plan to prove Theorem~\ref{thm-mm} by considering the following cases:
\begin{itemize}
\item[] Case 1: $\ell(\hat{\mathfrak w})=\delta(\hat{\mathfrak w})=\rho(\hat{\mathfrak w})$, \\
Case 2: $\ell(\hat{\mathfrak w})=\delta(\hat{\mathfrak w})=\rho(\hat{\mathfrak w})+1$, \\
Case 3: $\ell(\hat{\mathfrak w})=\delta(\hat{\mathfrak w})\geq\rho(\hat{\mathfrak w})+2$,\\
Case 4: $\ell(\hat{\mathfrak w})=\delta(\hat{\mathfrak w})+1$,\\
Case 5: $\ell(\hat{\mathfrak w})=\delta(\hat{\mathfrak w})+2$,\\
Case 6: $\ell(\hat{\mathfrak w})=\delta(\hat{\mathfrak w})+3$,\\
Case 7: $\ell(\hat{\mathfrak w})\geq\delta(\hat{\mathfrak w})+4$.\\
\end{itemize}

In what follows, we always set $\{i,j,k\}=\{1,2,3\}$.   Consider the natural partial order on $Q_{+}$, that is, $m_1\alpha_1+m_2\alpha_2+m_3\alpha_3   \ge m_1'\alpha_1+m_2'\alpha_2+m_3'\alpha_3$ if and only if $m_i\ge m_i'$ for all $i\in I$. We set $c_{ij} = |b_{ij}|$ for $i \neq j$ and $c_{ij}(\mathfrak v) = |b_{ij}(\mathfrak v)|$ for $i \neq j$ and $\mathfrak v \in \mathfrak W$.  

\subsection{Case 1: $\ell(\hat{\mathfrak w})=\delta(\hat{\mathfrak w})=\rho(\hat{\mathfrak w})$}
$\text{ }$\\
If $\ell(\hat{\mathfrak w})=\delta(\hat{\mathfrak w})=\rho(\hat{\mathfrak w})$ then $\hat{\mathfrak w}=\mathfrak w\in \mathcal{C}_1\cup\mathcal{C}_3$, 
equivalently $B(\hat{\mathfrak w})$ is acyclic. Write $\mathfrak w = \mathfrak u jk$ for $\mathfrak u \in \mathfrak W$ and $j, k \in I$. 
\begin{Lem} \label{lem-fff}
We have
  \begin{equation} \label{eqn-main-1}  \beta_k(\mathfrak u jk) = s_{\mathfrak u} s_j (\alpha_k)=\psi(\mathfrak u jk). \end{equation}
\end{Lem}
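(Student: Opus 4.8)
The plan is to prove the identity $\beta_k(\mathfrak{u}jk) = s_{\mathfrak w} s_j(\alpha_k) = \psi(\mathfrak u jk)$ in two parts. First, the equality $s_{\mathfrak u}s_j(\alpha_k) = \psi(\mathfrak u jk)$ follows almost immediately from the definition of $\psi$: since we are in Case 1, we have $\hat{\mathfrak w} = \mathfrak w \in \mathcal C_1 \cup \mathcal C_3$, so $\mathfrak v = \emptyset$, and the definition gives $\psi(\mathfrak u jk) = s_{i_1}\cdots s_{i_{\ell-1}}\alpha_{i_\ell} = s_{\mathfrak u}s_j(\alpha_k)$ directly, where $\ell = \ell(\mathfrak u jk)$ and $i_\ell = k$. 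So the genuine content is the first equality, $\beta_k(\mathfrak u jk) = s_{\mathfrak u}s_j(\alpha_k)$, which relates the denominator of the cluster variable produced in direction $k$ after the mutation sequence $\mu_{\mathfrak u jk}$ to an explicit product of simple reflections.

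\textbf{The inductive strategy.} I would prove $\beta_k(\mathfrak u jk) = s_{\mathfrak u}s_j(\alpha_k)$ by induction on $\ell(\mathfrak u)$, tracking how the denominator vector of the newly mutated cluster variable transforms under a single mutation. The key mechanism is the exchange relation \eqref{exchange relation}: mutating in direction $k$ replaces $x_k$ by a Laurent polynomial whose denominator, at the level of $d$-vectors, is governed by the tropical recurrence $\beta_k' = -\beta_k + \max\bigl(\sum_i [b_{ik}]_+ \beta_i,\ \sum_i [-b_{ik}]_+ \beta_i\bigr)$, where the three $\beta_i$ are the current denominator roots. Because $\mathfrak w \in \mathcal C_1 \cup \mathcal C_3$ keeps the quiver acyclic at every intermediate step, the signs in the exchange matrix $B(\mathfrak u j)$ are controlled, and the tropical $\max$ resolves to a single term. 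The heart of the argument is to show that this denominator recurrence, restricted to acyclic seeds, matches the action of the simple reflection $s_k$ on the root lattice: concretely, that appending the letter $k$ to the acyclic word sends the relevant root $r$ to $s_{\mathfrak u}s_j s_k$-type image, consistent with $\beta_k(\mathfrak u j k) = s_{\mathfrak u}s_j(\alpha_k)$. I would set up the induction so that the inductive hypothesis supplies the three denominator roots of $\Xi(\mathfrak u j)$ as explicit reflected simple roots, then verify that one more mutation in direction $k$ produces exactly $s_{\mathfrak u}s_j(\alpha_k)$.

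\textbf{The main obstacle.} The delicate point will be controlling the tropical sign computation: I must verify that throughout the acyclic subword $\mathfrak w$, the entries $b_{ik}(\mathfrak u j)$ have signs making the $\max$ in the $d$-vector recurrence unambiguous, and that the resulting positivity guarantees the $d$-vector genuinely equals $s_{\mathfrak u}s_j(\alpha_k)$ rather than merely agreeing up to a correction term that vanishes only in special cases. This requires knowing the shape of the exchange matrix $B(\mathfrak w)$ along $\mathcal C_1 \cup \mathcal C_3$ — that it cycles through the acyclic orientations with all $|b_{ij}| \ge 2$ — and I expect to invoke the explicit periodic form of these matrices (as computed in Example~\ref{exa-mutation}). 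The alignment between the cluster-theoretic $d$-vector recurrence and the Weyl group action is classical for acyclic seeds (this is essentially the content of Theorem~\ref{thm-CK} of Caldero--Keller together with the well-known identification of $d$-vectors with roots), so the real work is bookkeeping: confirming that the base case ($\ell(\mathfrak u jk)$ small, handled by direct computation as in $\Xi(3), \Xi(32)$) and the single-mutation step combine to give the closed form. Once the acyclic case is settled, the statement $\psi(\mathfrak u jk) = s_{\mathfrak u}s_j(\alpha_k)$ closes the lemma, and this will serve as the anchoring base case for the more intricate Cases 2 through 7, where the cyclic part $\mathfrak v$ first appears.
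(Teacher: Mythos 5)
Your proposal is correct and takes essentially the same approach as the paper: induction on the length of the word, with the inductive hypothesis supplying all three denominator roots of $\Xi(\mathfrak u j)$ as reflected simple roots ($\beta_j(\mathfrak u j)=s_{\mathfrak u}(\alpha_j)$, $\beta_i(\mathfrak u j)=s_{\mathfrak u}s_i(\alpha_i)$, $\beta_k(\mathfrak u j)=s_{\mathfrak u}s_is_k(\alpha_k)$), followed by a single application of the denominator recurrence $\beta_k(\mathfrak u jk) = -\beta_k(\mathfrak u j) + c_{ik}\beta_i(\mathfrak u j) + c_{jk}\beta_j(\mathfrak u j)$ --- your tropical max, which indeed collapses to a single term along the acyclic words --- and a short Weyl-group computation yielding $s_{\mathfrak u}s_j(\alpha_k)$. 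The remaining equality $s_{\mathfrak u}s_j(\alpha_k)=\psi(\mathfrak u jk)$ holds by definition, exactly as you note.
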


\begin{proof}
We have $\beta_u(u)=\alpha_u$ and $\beta_v(uv)= \alpha_v + c_{uv} \alpha_u = s_u(\alpha_v)$. Now, by induction,  we have 
\begin{align*}
\beta_j(\mathfrak u j) &= s_{\mathfrak u} (\alpha_j), \\ \beta_i(\mathfrak u j) & =\beta_i(\mathfrak u) = \beta_i (\mathfrak u' i) = s_{\mathfrak u'}(\alpha_i) = s_{\mathfrak u'} s_i s_i (\alpha_i) =s_{\mathfrak u} s_i (\alpha_i), \\
\beta_k(\mathfrak u j) & =\beta_k(\mathfrak u'' kij) = \beta_k (\mathfrak u'' k) = s_{\mathfrak u''}(\alpha_k) = s_{\mathfrak u''} s_k s_i s_i s_k (\alpha_k) =s_{\mathfrak u} s_i s_k (\alpha_k),
\end{align*}
where we write $\mathfrak u = \mathfrak u' i =\mathfrak u'' ki$.

Then  we have
\begin{align*}
\beta_k(\mathfrak u jk) &= - \beta_k(\mathfrak u j) + c_{ik}\beta_i(\mathfrak u j) + c_{jk}\beta_j(\mathfrak u j) \\ &= -s_{\mathfrak u} s_i s_k (\alpha_k) + c_{ik}s_{\mathfrak u} s_i (\alpha_i) + c_{jk}s_{\mathfrak u} (\alpha_j) \\ &= s_{\mathfrak u} [ s_i(\alpha_k) -c_{ik} \alpha_i + c_{jk} \alpha_j ] = s_{\mathfrak u} [ \alpha_k +c_{ik} \alpha_i -c_{ik} \alpha_i + c_{jk} \alpha_j ] \\ &= s_{\mathfrak u} [ \alpha_k + c_{jk} \alpha_j ] = s_{\mathfrak u} s_j(\alpha_k).
\end{align*}

Since $\psi(\mathfrak u jk)=s_{\mathfrak u} s_j (\alpha_k)$, we obtain
\begin{equation} \label{eqn-beta-0}
\beta_k(\mathfrak w) =\psi(\mathfrak w). 
\end{equation}
\end{proof}

\subsection{Case 2: $\ell(\hat{\mathfrak w})=\delta(\hat{\mathfrak w})=\rho(\hat{\mathfrak w})+1$}
Suppose that 
$\mathfrak w = \mathfrak u kij$. Then $\ell(\hat{\mathfrak w})=\delta(\hat{\mathfrak w})=\rho(\hat{\mathfrak w})+1$ implies that $\hat{\mathfrak w}=\mathfrak w i$.


\begin{Lem} \label{lem-ineq} Suppose that 
$\mathfrak w = \mathfrak u kij$ and $B(\mathfrak w)$ is acyclic. Then we have 
\[   c_{ij} \beta_j(\mathfrak w) \ge c_{ik} \beta_k(\mathfrak w) . \]
\end{Lem}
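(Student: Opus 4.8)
The plan is to turn the inequality into a positivity statement about a single positive root and then read off the required domination, with no induction on length. Throughout, the reflections $s_i$ and the constants $c_{pq}$ are taken with respect to the fixed Cartan matrix of $\mathcal Q$, so $s_i(\alpha_j)=\alpha_j+c_{ij}\alpha_i$ and there is no ambiguity about which multiplicities appear. First I would record, exactly as in the proof of Lemma \ref{lem-fff}, the reflection formulas for the last three cluster roots of an acyclic word: writing $\mathfrak w=i_1\cdots i_\ell$ with $(i_{\ell-2},i_{\ell-1},i_\ell)=(k,i,j)$, one has $\beta_{i_p}(\mathfrak w)=s_{i_1\cdots i_{p-1}}(\alpha_{i_p})$ for $p\in\{\ell-2,\ell-1,\ell\}$. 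In the present notation this reads
\[ \beta_j(\mathfrak w)=s_{\mathfrak u}s_ks_i(\alpha_j), \qquad \beta_k(\mathfrak w)=s_{\mathfrak u}(\alpha_k). \]

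The key step is a decomposition of $\beta_j(\mathfrak w)$ that isolates a nonnegative multiple of $\beta_k(\mathfrak w)$. Using $s_i(\alpha_j)=\alpha_j+c_{ij}\alpha_i$, $s_k(\alpha_i)=\alpha_i+c_{ik}\alpha_k$ and $s_k(\alpha_j)=\alpha_j+c_{jk}\alpha_k$, a one-line computation gives
\[ s_ks_i(\alpha_j)=s_i(\alpha_j)+(c_{jk}+c_{ij}c_{ik})\,\alpha_k, \]
and applying $s_{\mathfrak u}$ (together with $\beta_k(\mathfrak w)=s_{\mathfrak u}(\alpha_k)$) yields
\[ \beta_j(\mathfrak w)=s_{\mathfrak u}s_i(\alpha_j)+(c_{jk}+c_{ij}c_{ik})\,\beta_k(\mathfrak w). \]
The crucial point, and what I expect to be the only genuine obstacle, is to show that $s_{\mathfrak u}s_i(\alpha_j)$ lies in $Q_+$. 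This is where the combinatorics of $\mathcal C_1\cup\mathcal C_3$ enters: these words are periodic of period $3$, so the last letter of $\mathfrak u$ is $i_{\ell-3}=i_\ell=j\neq i$. Hence the word $\mathfrak u ij$ has no two equal consecutive letters, i.e. $\mathfrak u ij\in\mathfrak W$; since $\mathfrak W$ is identified with the reduced words of $W\cong(\mathbb Z/2\mathbb Z)^{*3}$, it follows that $\mathfrak u ij$ is reduced. Therefore $s_{\mathfrak u}s_i(\alpha_j)=s_{\mathfrak u i}(\alpha_j)$ is the image of a simple root under a Weyl element $w=s_{\mathfrak u i}$ with $\ell(ws_j)>\ell(w)$, so it is a positive root and in particular lies in $Q_+$. (When $\mathfrak u=\emptyset$ this degenerates to the trivial $s_i(\alpha_j)=\alpha_j+c_{ij}\alpha_i\in Q_+$, so the same statement covers the base case.)

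With this in hand the inequality is immediate and uniform in $\mathfrak u$. From the displayed identity and $s_{\mathfrak u}s_i(\alpha_j),\,\beta_k(\mathfrak w)\in Q_+$ I obtain $\beta_j(\mathfrak w)\ge(c_{jk}+c_{ij}c_{ik})\beta_k(\mathfrak w)\ge c_{ij}c_{ik}\,\beta_k(\mathfrak w)$, whence
\[ c_{ij}\beta_j(\mathfrak w)-c_{ik}\beta_k(\mathfrak w)\ \ge\ \bigl(c_{ij}^{\,2}c_{ik}-c_{ik}\bigr)\beta_k(\mathfrak w)=c_{ik}\bigl(c_{ij}^{\,2}-1\bigr)\beta_k(\mathfrak w)\ \ge\ 0, \]
using $c_{ij}\ge 2$ (so $c_{ij}^{2}-1>0$) and $\beta_k(\mathfrak w)\in Q_+$. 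This gives $c_{ij}\beta_j(\mathfrak w)\ge c_{ik}\beta_k(\mathfrak w)$. The only place demanding real care is the reducedness of $\mathfrak u ij$: it rests entirely on the period-$3$ pattern of $\mathcal C_1\cup\mathcal C_3$ forcing $i_{\ell-3}=j\neq i$, and it is exactly this that lets the single positive root $s_{\mathfrak u}s_i(\alpha_j)$ absorb the comparison, bypassing any need to control $s_{\mathfrak u}$ on all of $Q_+$.
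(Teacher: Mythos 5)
Your proof is correct, and it takes a genuinely different route from the paper's. The paper proves Lemma \ref{lem-ineq} by induction on $\ell(\mathfrak w)$: it uses the denominator-vector mutation recursion to expand
$\beta_j(\mathfrak w) = (c_{jk}+c_{ij}c_{ik})\beta_k(\mathfrak u k) - c_{ij}\beta_i(\mathfrak u k) + (c_{ij}^2-1)\beta_j(\mathfrak u k)$,
and then invokes the same inequality for the shorter word $\mathfrak u k$ (with the indices cyclically shifted, i.e.\ $c_{jk}\beta_k(\mathfrak u k)\ge c_{ij}\beta_i(\mathfrak u k)$) together with $c_{ij}\ge 2$ to close the induction. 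You avoid induction on the inequality altogether: you take the closed forms $\beta_j(\mathfrak w)=s_{\mathfrak u}s_ks_i(\alpha_j)$ and $\beta_k(\mathfrak w)=s_{\mathfrak u}(\alpha_k)$ (legitimately available, since Lemma \ref{lem-fff} precedes this lemma, prefixes of words in $\mathcal C_1\cup\mathcal C_3$ are again acyclic, and mutations at other vertices do not change the $k$-th denominator), apply the one-line root identity $s_ks_i(\alpha_j)=s_i(\alpha_j)+(c_{jk}+c_{ij}c_{ik})\alpha_k$, and reduce everything to the positivity of $s_{\mathfrak u i}(\alpha_j)$. That positivity step is sound: the period-$3$ structure of $\mathcal C_1\cup\mathcal C_3$ does force the last letter of $\mathfrak u$ to equal $j$, so $\mathfrak u ij\in\mathfrak W$ is reduced in $W\cong(\mathbb Z/2\mathbb Z)^{*3}$, and images of simple roots under length-increasing elements are positive roots — exactly the technique the paper itself uses a page later in Lemma \ref{lem-ini} (the argument around \eqref{eqn-uau}). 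Your approach buys a shorter, non-recursive argument and in fact yields the stronger bound $\beta_j(\mathfrak w)\ge(c_{jk}+c_{ij}c_{ik})\beta_k(\mathfrak w)$, of which the stated inequality is an easy consequence; what the paper's inductive formulation buys is uniformity with the rest of Section 5, where the analogous inequalities (Lemmas \ref{ppa}, \ref{pqa}, etc.) concern non-acyclic words, the closed forms are not yet available, and induction through the mutation recursion is genuinely needed.
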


\begin{proof}
We use induction on the length of $\mathfrak w$. Base cases can be checked easily.  We have \[ \beta_i(\mathfrak u ki)=-\beta_i(\mathfrak u k)+ c_{ik}\beta_k(\mathfrak u k)+c_{ij} \beta_j(\mathfrak u k) \] and 
\begin{align*}
\beta_j(\mathfrak w) &= -\beta_j(\mathfrak u ki) + c_{jk} \beta_k(\mathfrak u ki) + c_{ij} \beta_i(\mathfrak u ki) \\ &=  -\beta_j(\mathfrak u ki) + c_{jk} \beta_k(\mathfrak u ki) + c_{ij} [-\beta_i(\mathfrak u k)+ c_{ik}\beta_k(\mathfrak u k)+c_{ij} \beta_j(\mathfrak u k)] \\ &=
(c_{jk}+c_{ij}c_{ik})\beta_k(\mathfrak u k) - c_{ij} \beta_i (\mathfrak u k) +(c_{ij}^2 -1) \beta_j(\mathfrak u k) .
\end{align*}

By induction, assume that $c_{jk}\beta_k(\mathfrak u k) \ge c_{ij}\beta_i(\mathfrak u k)$.  Then we have 
\begin{align*} c_{ij} \beta_j(\mathfrak w) &= c_{ij}(c_{jk}+c_{ij}c_{ik})\beta_k(\mathfrak u k) - b^2_{ij} \beta_i (\mathfrak u k) +c_{ij}(c_{ij}^2 -1) \beta_j(\mathfrak u k) \\ &\ge c_{ij}(c_{jk}+c_{ij}c_{ik})\beta_k(\mathfrak u k) - b^2_{ij} \beta_i (\mathfrak u k) \\ &\ge c_{ij}(c_{jk}+c_{ij}c_{ik})\beta_k(\mathfrak u k) - c_{ij}c_{jk} \beta_k (\mathfrak u k) =  b^2_{ij}c_{ik} \beta_k(\mathfrak u k) \\ & \ge c_{ik} \beta_k(\mathfrak u k) = c_{ik} \beta_k(\mathfrak w). \end{align*}   
\end{proof}

Suppose that $\mathfrak v \in \mathfrak W$ ends with $j$ and consider $\mathfrak v i$. If we have $c_{ij}(\mathfrak v) \beta_j(\mathfrak v) \ge c_{ik}(\mathfrak v) \beta_k(\mathfrak v)$, we record this situation using $[j]$  below the $i$-arrow in the following diagram:
\[ \xrightarrow[{\ \phantom{[i]} \ }]{j} \mathfrak v  \xrightarrow[{\ [j] \ }]{i} \mathfrak v i .\]
Similarly, if $c_{ij}(\mathfrak v) \beta_j(\mathfrak v) \le c_{ik}(\mathfrak v) \beta_k(\mathfrak v)$, we write
\[ \xrightarrow[{\ \phantom{[i]} \ }]{j} \mathfrak v  \xrightarrow[{\ [k] \ }]{i} \mathfrak v i .\]

Remember that $B(\mathfrak w)$ is acyclic and $B(\hat{\mathfrak w})=B(\mathfrak w i)$ is cyclic. By definition, we have $\psi(\mathfrak wi)=s_{\mathfrak w}\alpha_i$. If $\mathfrak w=\emptyset$, then $\beta_i(\mathfrak w i)= \beta_i(i)=\alpha_i$; if $\mathfrak w=j$, then $\beta_i(\mathfrak w i)=\beta_i(ji)=s_j(\alpha_i)$; if $\mathfrak w=ij$, then $\beta_i(\mathfrak w i)=\beta_i(iji)=s_is_j(\alpha_i)$. 
In all these cases, we have \eqref{eqn-mmm}.

Now suppose that $\mathfrak w = \mathfrak u kij$. 
By Lemma \ref{lem-ineq}, we have $c_{ij} \beta_j(\mathfrak w) \ge c_{ik} \beta_k(\mathfrak w)$. Thus we have
\[ \xrightarrow[{\ \phantom{[i]} \ }]{j} \mathfrak w  \xrightarrow[{\ [j] \ }]{i} \mathfrak w i .\]
By Case 1, we have 
\begin{align*} \beta_{i}(\mathfrak w i) &= -\beta_i(\mathfrak w)+c_{ij}\beta_j(\mathfrak w)= - s_{\mathfrak w} s_j s_i( \alpha _i) + c_{ij} s_{\mathfrak w} s_j (\alpha_j) \\
&= s_{\mathfrak w} [ s_j \alpha_i - c_{ij} \alpha_j] = s_{\mathfrak w } (\alpha_i).
 \end{align*} 
Thus we have
\begin{equation} \label{eqn-main-2}
\beta_{i}(\mathfrak w i) = s_{\mathfrak w } (\alpha_i)=\psi(\mathfrak wi). 
\end{equation}
This proves \eqref{eqn-mmm} in this case.

\subsection{Case 3: $\ell(\hat{\mathfrak w})=\delta(\hat{\mathfrak w})\geq\rho(\hat{\mathfrak w})+2$}
Assume that $B(\mathfrak w)$ is acyclic and $B(\mathfrak w i)$ is cyclic.

\begin{Lem} \label{lem-ini}
We have 
\[ \xrightarrow[{\ \phantom{[i]} \ }]{i} \mathfrak wi  \xrightarrow[{\ [i] \ }]{j} \mathfrak w ij  \xrightarrow[{\ [j] \ }]{i} \mathfrak w iji .\] That is, we have
\begin{align*} & c_{ij}(\mathfrak wi) \beta_i(\mathfrak w i) \ge c_{jk}(\mathfrak w i) \beta_k(\mathfrak wi), \quad \beta_j(\mathfrak w ij)=s_{\mathfrak w} s_i (\alpha_j), \\ \text{and \qquad} & c_{ij}(\mathfrak wij) \beta_j(\mathfrak w ij) \ge c_{ik}(\mathfrak w ij) \beta_k(\mathfrak wij),  \quad \beta_i(\mathfrak w iji)=s_{\mathfrak w} s_is_j (\alpha_i). \end{align*}
\end{Lem}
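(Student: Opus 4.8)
The plan is to propagate the triple of denominator vectors $(\beta_1,\beta_2,\beta_3)$ along the mutation path $\mathfrak wi \to \mathfrak wij \to \mathfrak wiji$ using the $d$-vector mutation rule, exactly as was done in Cases 1 and 2. Recall that mutating a seed at a vertex $p$ replaces $\beta_p$ by
\[ \beta_p^{\,\mathrm{new}} = -\beta_p + \max\Bigl( \textstyle\sum_q [b_{qp}]_+\,\beta_q,\ \sum_q [-b_{qp}]_+\,\beta_q \Bigr), \]
the maximum being taken componentwise with respect to the partial order on $Q_+$. In rank $3$ each of the two sums reduces to a single term of the form $c_{ij}\beta_i$ or $c_{jk}\beta_k$ coming from the two neighbors of $p$, so to evaluate the mutation one only needs to decide which of these two terms dominates; this is precisely the content of the bracketed labels $[i]$ and $[j]$ in the statement.

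\textbf{Input data.} First I would record what is already known. From Case 2 we have $\beta_i(\mathfrak wi)=s_{\mathfrak w}(\alpha_i)$ by \eqref{eqn-main-2}, and since mutating at $i$ changes only the $i$-th cluster variable we have $\beta_j(\mathfrak wi)=\beta_j(\mathfrak w)$ and $\beta_k(\mathfrak wi)=\beta_k(\mathfrak w)$, both of which are the explicit expressions $s_{\mathfrak w}s_\bullet(\alpha_\bullet)$ obtained in the proof of Lemma \ref{lem-fff}. I would also read off the orientation and the arrow multiplicities of the cyclic quiver $B(\mathfrak wi)$, and subsequently of $B(\mathfrak wij)$, directly from the matrix mutation formula \eqref{eq:matrix-mutation}.

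\textbf{The two arrows.} For the first arrow I would establish the inequality $c_{ij}(\mathfrak wi)\,\beta_i(\mathfrak wi)\ge c_{jk}(\mathfrak wi)\,\beta_k(\mathfrak wi)$ (the label $[i]$), arguing by induction on $\ell(\mathfrak w)$ in the same spirit as Lemma \ref{lem-ineq} and using $c_{\bullet\bullet}\ge 2$. Granting this, the dominant term is $c_{ij}(\mathfrak wi)\beta_i(\mathfrak wi)$, so the mutation rule gives $\beta_j(\mathfrak wij)=-\beta_j(\mathfrak wi)+c_{ij}(\mathfrak wi)\beta_i(\mathfrak wi)$; substituting the known values and simplifying with $s_i^2=1$ and $s_i(\alpha_j)=\alpha_j+c_{ij}\alpha_i$, exactly as in the displayed computation proving \eqref{eqn-main-2}, collapses this to $s_{\mathfrak w}s_i(\alpha_j)$. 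I would then repeat the identical two-part argument for the second arrow: prove the inequality $c_{ij}(\mathfrak wij)\,\beta_j(\mathfrak wij)\ge c_{ik}(\mathfrak wij)\,\beta_k(\mathfrak wij)$ (the label $[j]$), and then compute $\beta_i(\mathfrak wiji)=-\beta_i(\mathfrak wij)+c_{ij}(\mathfrak wij)\beta_j(\mathfrak wij)=s_{\mathfrak w}s_is_j(\alpha_i)$.

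\textbf{Main obstacle.} The reflection-identity manipulations are routine; the difficulty lies in the pair of inequalities, since they require controlling jointly how the arrow multiplicities $c_{\bullet\bullet}(\mathfrak v)$ grow under mutation and how that growth interacts with the partial order on the $\beta$-vectors. As in Lemma \ref{lem-ineq}, the cleanest route is a simultaneous induction that carries along both the inequalities at the current seed and the explicit root expressions, so that each inductive step only has to verify a fixed set of comparisons between $\mathbb Z_{\ge 0}$-combinations of the three incoming $\beta$-vectors; the hypothesis $c_{\bullet\bullet}\ge 2$ is what makes these comparisons go through.
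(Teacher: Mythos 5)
Your skeleton matches the paper's: the bracketed labels record which of the two competing monomials dominates in the denominator-vector recurrence, one first proves the dominance inequality, and then the new denominator is computed from the dominant term. Your handling of the two identities is also correct: once the label $[i]$ is known, $\beta_j(\mathfrak wij)=-\beta_j(\mathfrak wi)+c_{ij}(\mathfrak wi)\beta_i(\mathfrak wi)$ together with $\beta_i(\mathfrak wi)=s_{\mathfrak w}(\alpha_i)$, $\beta_j(\mathfrak wi)=s_{\mathfrak w}s_j(\alpha_j)$ and $c_{ij}(\mathfrak wi)=c_{ij}$ does collapse to $s_{\mathfrak w}s_i(\alpha_j)$, and similarly for $\beta_i(\mathfrak wiji)$; this is in fact slightly more elementary than the paper, which instead proves $c_{ij}(\mathfrak wi)=(\beta_i(\mathfrak wi),\beta_j(\mathfrak wi))$ and recognizes the mutation as $-r_{\beta_i(\mathfrak wi)}$ applied to $\beta_j(\mathfrak wi)$.

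The genuine gap is in the inequalities, which you rightly identify as the main difficulty but then dispose of with ``induction on $\ell(\mathfrak w)$ in the same spirit as Lemma \ref{lem-ineq}'' plus the assertion that $c_{\bullet\bullet}\ge 2$ makes the comparisons go through. That is not how the paper proves them, and the mechanism of Lemma \ref{lem-ineq} does not transfer as stated. Writing $\mathfrak w=\mathfrak u kij$ and using the Case 1--2 formulas, the first inequality reads
\[ c_{ij}\, s_{\mathfrak u}s_ks_is_j(\alpha_i) \ \ge\ (c_{jk}+c_{ij}c_{ik})\, s_{\mathfrak u}(\alpha_k), \]
and after expanding $s_ks_is_j(\alpha_i)$ in simple roots, the difference of the two sides contains the term $c_{ij}^2\, s_{\mathfrak u}(\alpha_j)$, where $s_{\mathfrak u}(\alpha_j)$ is a \emph{negative} root because $\mathfrak u$ ends in $j$. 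So the verification cannot proceed purely by comparing $\mathbb Z_{\ge 0}$-combinations of denominator vectors, and $c_{\bullet\bullet}\ge 2$ alone cannot absorb this negative contribution; in Lemma \ref{lem-ineq} every term after substitution is a multiple of a (non-negative) denominator vector of the preceding seed and a single inductive hypothesis controls the one negative term, whereas here that device is unavailable. The paper supplies exactly the two inputs your sketch is missing: the Coxeter-theoretic fact that $s_{\mathfrak u}(\alpha_i)$ and $s_{\mathfrak u}(\alpha_k)$ are positive roots (because $\mathfrak u i$ and $\mathfrak u k$ are reduced), and the auxiliary cross-seed inequality \eqref{eqn-uau}, $s_{\mathfrak u}(\alpha_k)+s_{\mathfrak u}(\alpha_j)\ge 0$, which is what absorbs the negative term; only after these does the proof reduce to a numerical inequality in the $c$'s of the kind you envisage. (One could alternatively absorb the term by invoking Lemma \ref{lem-ineq} at the shorter word $\mathfrak u k$, but that identification and absorption step is likewise absent from your proposal.) Without some such ingredient the induction you describe does not close, so the core of the lemma remains unproved.
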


\begin{proof}
If the length of $\mathfrak w$ is less than $3$, it can be checked directly. Otherwise, write $\mathfrak w = \mathfrak u kij$. Using \eqref{eqn-main-2}, we have
\begin{align*} c_{ij}(\mathfrak wi) \beta_i(\mathfrak w i) &= c_{ij} s_{\mathfrak w} (\alpha_i) = c_{ij} s_{\mathfrak u} s_k s_i s_j (\alpha_i) \\
&= c_{ij} s_{\mathfrak u} [ (c_{ij}^2-1) \alpha_i + c_{ij} \alpha_j +(c_{ij}^2c_{ik} -c_{ik}+c_{ij}c_{jk}) \alpha_k ] \\ 
&=   c_{ij}(c_{ij}^2-1) s_{\mathfrak u}(\alpha_i) + c_{ij}^2 s_{\mathfrak u}(\alpha_j) +(c_{ij}^3c_{ik} -c_{ij}c_{ik}+c_{ij}^2c_{jk}) s_{\mathfrak u}(\alpha_k); \end{align*}
on the other hand, using \eqref{eqn-main-1}, we have
\begin{align*}
c_{jk}(\mathfrak w i) \beta_k(\mathfrak wi) &= (c_{jk}+c_{ij}c_{ik}) \beta_k (\mathfrak u k) = (c_{jk}+c_{ij}c_{ik}) s_{\mathfrak u}(\alpha_k).
\end{align*}
Since $\mathfrak u i$ and $\mathfrak u k$ are reduced expressions, we see that $s_{\mathfrak u}(\alpha_i)$ and $s_{\mathfrak u}(\alpha_k)$ are positive roots. We claim that $s_{\mathfrak u}(\alpha_i) \ge - s_{\mathfrak u}(\alpha_k)$. Indeed, writing $\mathfrak u = \mathfrak u' j$, we have
\begin{align}
s_{\mathfrak u}(\alpha_k) + s_{\mathfrak u}(\alpha_j) &= s_{\mathfrak u'}s_j(\alpha_k) +s_{\mathfrak u'}s_j(\alpha_j) = s_{\mathfrak u'}(\alpha_k +c_{jk} \alpha_j) -s_{\mathfrak u'}(\alpha_j) \nonumber \\ &= s_{\mathfrak u'}(\alpha_k)  +(c_{jk} -1) s_{\mathfrak u'}(\alpha_j)\ge 0, \label{eqn-uau}
\end{align}
since $s_{\mathfrak u'} (\alpha_k)$ is a positive root and $c_{jk} \ge 2$. 

Now we have only to show that
\[ - c_{ij}^2  +c_{ij}^3c_{ik} -c_{ij}c_{ik}+c_{ij}^2c_{jk} \ge c_{jk}+c_{ij}c_{ik} ,\]
which is equivalent to
\[ c_{ij}^2c_{jk}-c_{ij}^2 -c_{jk} + c_{ij}^3c_{ik} - 2 c_{ij}c_{ik} \ge 0 .\]
We write the left-hand side of the inequality as
\[ (c_{ij}^2 -1)(c_{jk}-1) -1+ (c_{ij}^2-2)  c_{ij}c_{ik}, \]
and we are done since $c_{ij}, c_{jk} \ge 2$. 

Note that $c_{ij}(\mathfrak w i)=(\beta_i(\mathfrak w i), \beta_j(\mathfrak w i))$. Indeed, since $\beta_i({\mathfrak w} i)=s_{{\mathfrak w}} (\alpha_i)$ and $\beta_j(\mathfrak w i) = s_{\mathfrak w} s_j (\alpha_j)$, we have
\[  (\beta_i(\mathfrak w i), \beta_j(\mathfrak w i)) = (s_{\mathfrak w}(\alpha_i), s_{\mathfrak w} s_j (\alpha_j) )= - (\alpha_i, \alpha_j) =c_{ij}= c_{ij}(\mathfrak w i) .\]
Then, 
from $r_{\beta_i({\mathfrak w} i)} = s_{{\mathfrak w}} s_i s_{{\mathfrak w}}^{-1}$, we obtain
\begin{align} \label{eqn-main-3} \beta_{j}({\mathfrak w} ij) &= -\beta_j(\mathfrak w i) + c_{ij}(\mathfrak w i) \beta_i(\mathfrak w i) =  -\beta_j(\mathfrak w i) + (\beta_i(\mathfrak w i), \beta_j(\mathfrak w i)) \beta_i(\mathfrak w i) \\ &=
-r_{\beta_i({\mathfrak w} i)} (\beta_j({\mathfrak w} i) ) = -s_{{\mathfrak w}} s_i s_{{\mathfrak w}}^{-1} s_{{\mathfrak w}} s_j  (\alpha_j) = s_{{\mathfrak w}} s_i(\alpha_j). \nonumber
\end{align}

A similar argument establishes $c_{ij}(\mathfrak wij) \beta_j(\mathfrak w ij) \ge c_{ik}(\mathfrak w ij) \beta_k(\mathfrak wij)$ and $\beta_i(\mathfrak w iji)=s_{\mathfrak w} s_is_j (\alpha_i)$. 
\end{proof}

\begin{Lem} \label{ppa}
Let $\tilde{\mathfrak w}= \mathfrak w i(ji)^n$ for $n \in \mathbb Z_{\ge 0}$. Then we have
\begin{align} c_{ij}(\tilde{\mathfrak w}) \beta_i(\tilde{\mathfrak w}) & > c_{jk}(\tilde{\mathfrak w}) \beta_k(\tilde{\mathfrak w}), \quad \beta_j(\tilde{\mathfrak w} j)=s_{\tilde{\mathfrak w}}  (\alpha_j), \label{eqn-fi} \\ c_{ij}(\tilde{\mathfrak w}j) \beta_j(\tilde{\mathfrak w}j) & > c_{ik}(\tilde{\mathfrak w}j) \beta_k(\tilde{\mathfrak w}j), \quad \beta_i(\tilde{\mathfrak w}ji)=s_{\tilde{\mathfrak w}} s_j (\alpha_i). \label{eqn-se} \end{align} 
\end{Lem}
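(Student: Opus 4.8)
The plan is to prove both assertions simultaneously by induction on $n$, with Lemma~\ref{lem-ini} furnishing the base case $n=0$. Note that the inequalities in Lemma~\ref{lem-ini} are in fact strict: for $c_{ij},c_{jk}\ge 2$ the quantity $(c_{ij}^2-1)(c_{jk}-1)-1+(c_{ij}^2-2)c_{ij}c_{ik}$ is at least $2$, so the surviving $\alpha_k$-coefficient is positive. The one structural observation that makes the induction manageable is that the suffix $i(ji)^n$ appended to $\mathfrak w$ mutates only in the directions $i$ and $j$; consequently the $k$-th denominator vector is never touched after $\mathfrak w$, so $\beta_k(\tilde{\mathfrak w})=\beta_k(\tilde{\mathfrak w}j)=\beta_k(\mathfrak w)$ for every word occurring in the argument. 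Writing $\gamma_k:=\beta_k(\mathfrak w)$, which is a fixed positive root computed in Case~1, each of the two inequalities \eqref{eqn-fi} and \eqref{eqn-se} becomes a comparison between a growing positive root on the left and the fixed root $\gamma_k$ scaled by an exchange-matrix entry on the right.

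For the inductive step I would set $u:=s_{\tilde{\mathfrak w}}$ at level $n$ and propagate the three vectors through the two mutations $\mu_j$, $\mu_i$ that carry $\tilde{\mathfrak w}_n$ to $\tilde{\mathfrak w}_{n+1}=\tilde{\mathfrak w}_n ji$. The inductive hypothesis supplies $\beta_j(\tilde{\mathfrak w}_n j)=u(\alpha_j)$ and $\beta_i(\tilde{\mathfrak w}_{n+1})=\beta_i(\tilde{\mathfrak w}_n ji)=us_j(\alpha_i)$, while $\beta_k$ equals $\gamma_k$ throughout. Using the $W$-invariance of the bilinear form together with the identity $c_{pq}(\cdot)=|(\beta_p(\cdot),\beta_q(\cdot))|$ established in the proof of Lemma~\ref{lem-ini}, one computes $c_{ij}(\tilde{\mathfrak w}_{n+1})=|(us_j\alpha_i,u\alpha_j)|=|(s_j\alpha_i,\alpha_j)|=c_{ij}$, so this entry is constant along the induction. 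The two explicit root identities at level $n+1$, namely $\beta_j(\tilde{\mathfrak w}_{n+1}j)=s_{\tilde{\mathfrak w}_{n+1}}(\alpha_j)$ and $\beta_i(\tilde{\mathfrak w}_{n+1}ji)=s_{\tilde{\mathfrak w}_{n+1}}s_j(\alpha_i)$, then drop out of the mutation formula for denominator vectors by rewriting each mutation as the reflection $r_{\beta}$, exactly as in the derivation of \eqref{eqn-main-3}; here the constancy of $c_{ij}$ is what guarantees the two-term collapse to $-r_{\beta_i}(\beta_j)$ and hence the clean Weyl-group expressions.

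The main obstacle is the two strict inequalities, and I would handle them by the method already used for Lemma~\ref{lem-ini}. Each difference, say $c_{ij}\beta_i(\tilde{\mathfrak w}_{n+1})-c_{jk}(\tilde{\mathfrak w}_{n+1})\gamma_k$, I would expand as an integer combination of the roots $u(\alpha_i)$, $u(\alpha_j)$, $u(\alpha_k)$ with coefficients polynomial in $c_{ij},c_{jk},c_{ik}$; the signs of these roots are dictated by which one-letter extensions of $\tilde{\mathfrak w}_n$ are reduced, and the lone negative contribution is absorbed using a relation of the shape \eqref{eqn-uau}, i.e. positivity of a sum $u(\alpha_j)+u(\alpha_k)$ coming from $c_{jk}\ge 2$. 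What remains is a single scalar inequality in $c_{ij},c_{jk},c_{ik}$ that factors into a manifestly positive expression under $c_{pq}\ge 2$. The delicate part, and the reason the two clauses of the lemma must be carried together, is to verify that $B(\tilde{\mathfrak w}_{n+1})$ retains the same cyclic orientation as $B(\tilde{\mathfrak w}_n)$ so that the mutation formula keeps its expected three-term form at every stage; the strictness of the inequalities is precisely what certifies this sign pattern and feeds it forward into the next period of the induction.
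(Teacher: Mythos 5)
Your scaffolding agrees with the paper's own proof: induction on $n$ with Lemma~\ref{lem-ini} as base case (and your remark that the base-case inequalities are in fact strict is correct and worth making, since Lemma~\ref{lem-ini} is stated with $\ge$ while Lemma~\ref{ppa} asserts $>$); the invariance $\beta_k(\tilde{\mathfrak w})=\beta_k(\tilde{\mathfrak w}j)=\beta_k(\mathfrak w)$; the constancy of $c_{ij}$; and the derivation of the two root identities by collapsing each mutation to $-r_\beta$ exactly as in \eqref{eqn-main-3}. The gap is in the inductive step for the inequalities, which is the real content of the lemma. Set $\gamma=c_{ij}$, $\tilde{\mathfrak w}_n=\mathfrak w i(ji)^n$, and let $D^A_n:=\gamma\beta_i(\tilde{\mathfrak w}_n)-c_{jk}(\tilde{\mathfrak w}_n)\beta_k$ and $D^B_n:=\gamma\beta_j(\tilde{\mathfrak w}_nj)-c_{ik}(\tilde{\mathfrak w}_nj)\beta_k$ be the two differences whose positivity is \eqref{eqn-fi} and \eqref{eqn-se}. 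Substituting the linear mutation formulas (which your hypothesis does legitimize) into these definitions yields the exact recursions $D^A_{n+1}=\gamma D^B_n-D^A_n$ and $D^B_{n+1}=\gamma D^A_{n+1}-D^B_n$. Hence the claim at level $n+1$ is equivalent to the \emph{comparisons} $\gamma D^B_n>D^A_n$ and $\gamma D^A_{n+1}>D^B_n$, whereas your induction hypothesis records only that $D^A_n$ and $D^B_n$ are positive; positivity of two vectors says nothing about how they compare, so the step does not close. This is also why your concluding claim --- that after absorbing one negative term via \eqref{eqn-uau} ``what remains is a single scalar inequality in $c_{ij},c_{jk},c_{ik}$'' --- cannot be right: any expansion of $D^A_{n+1}$ over a fixed collection of roots has coefficients that grow with $n$, and the scalar inequality the paper actually verifies, \eqref{eqn-uu}, involves the $n$-dependent quantities $y_n,y_{n+1}$, not just the entries of $B$.

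There are two ways to supply the missing ingredient. The paper's route avoids inducting on the inequality step-by-step: induction is used only to justify that the mutations act linearly (through the transfer matrices $J_j,J_i$) and that $c_{ij}(\tilde{\mathfrak w})=(\beta_i(\tilde{\mathfrak w}),\beta_j(\tilde{\mathfrak w}))$; then $J^n$ is computed in closed form (diagonalization for $\gamma>2$, a separate direct computation for $\gamma=2$), and the level-$n$ inequality is reduced to scalar inequalities verified with $x_n=y_{n+1}+y_n$, $y_{n+1}>y_n$ and $\gamma,c_{ik},c_{jk}\ge2$. Alternatively, your one-step induction can be repaired by strengthening the hypothesis to the chain $D^B_n\ge D^A_n>0$: then $D^A_{n+1}=\gamma D^B_n-D^A_n\ge(\gamma-1)D^B_n\ge D^B_n>0$ and $D^B_{n+1}=\gamma D^A_{n+1}-D^B_n\ge(\gamma-1)D^A_{n+1}\ge D^A_{n+1}>0$, using only $\gamma\ge2$; but then the base case also requires $D^B_0\ge D^A_0$, an additional computation of the same kind as Lemma~\ref{lem-ini} that your proposal does not contain. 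Two smaller inaccuracies: the identity $c_{pq}(\cdot)=|(\beta_p(\cdot),\beta_q(\cdot))|$ is false for general pairs --- by Corollary~\ref{cor-var}, $(\beta_i(\tilde{\mathfrak w}),\beta_k(\tilde{\mathfrak w}))=-c_{ik}(\tilde{\mathfrak w})+c_{ij}(\tilde{\mathfrak w})c_{jk}(\tilde{\mathfrak w})$ --- though it does hold for the pair $(i,j)$ you use; in any case the constancy of $c_{ij}$ is immediate from \eqref{eq:matrix-mutation}, since mutating at $i$ or $j$ merely flips the sign of $b_{ij}$. Likewise, the cyclic shape of $B(\tilde{\mathfrak w}_{n+1})$ is a fact about matrix mutation alone (the characterization via $\mathcal C_1\cup\mathcal C_3$), not something certified by the strictness of the inequalities; what the inequalities certify is which of the two terms in the denominator-vector exchange achieves the maximum.
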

This means that 
 we have \[ \mathfrak w i(ji)^n \xrightarrow[{\ [i] \ }]{j} \mathfrak w i(ji)^nj \xrightarrow[{\ [j] \ }]{i} \mathfrak w i(ji)^{n+1} \]
for each $n \in \mathbb Z_{\ge 0}$.

\begin{proof}
We have $\beta_k(\mathfrak w )=\beta_k(\mathfrak w i(ji)^n)= \beta_k(\mathfrak w i(ji)^nj)$. Similarly, $c_{ij}=c_{ij}(\mathfrak w i)= c_{ij}(\mathfrak w i(ji)^n)= c_{ij}(\mathfrak w i(ji)^nj)$. We write $\gamma = c_{ij}$ for simplicity. We use induction on $n$. The case $n=0$ is proven in Lemma \ref{lem-ini}. Thus we assume $n>0$. If we consider the vector $( \beta_i(\mathfrak w'), \beta_j(\mathfrak w')$ for $\mathfrak w' = \mathfrak w i(ji)^m$ with $m<n$,  the vector $( \beta_i(\mathfrak w'j), \beta_j(\mathfrak w'j)$ after mutation $j$ is given by the matrix $J_j:=\begin{pmatrix} 1 & \gamma \\ 0 & -1 \end{pmatrix}$ through right multiplication, and the vector after mutation $i$ for $\mathfrak w' = \mathfrak w i(ji)^mj$ is given by $J_i := \begin{pmatrix} -1 & 0 \\ \gamma & 1 \end{pmatrix}$.
 Similarly, if we consider the vector $(c_{jk}(\mathfrak w') , c_{ik}(\mathfrak w') )$, the matrices for mutation $j$ and $i$ are respectively given by the same matrices $J_j$ and $J_i$.

Let $J=J_jJ_i= \begin{pmatrix} \gamma^2-1 & \gamma \\ -\gamma & -1 \end{pmatrix}$.
First, assume that $\gamma>2$.
We 
denote two eigenvalues of $J$ by $\lambda_1$ and $\lambda_2$ with $\lambda_1 > \lambda_2$. 
Then we have $\lambda_1 > 1 > \lambda_2>0$. 
A diagonalization  $J=PDP^{-1}$ of $J$ is given by $D=\begin{pmatrix} \lambda_1 & 0 \\ 0 & \lambda_2 \end{pmatrix}$ and $P=\begin{pmatrix} 1+\lambda_1 & 1+ \lambda_2 \\ - \gamma & - \gamma \end{pmatrix}$. With $\lambda_1+\lambda_2=\gamma^2-2$ and $\lambda_1 \lambda_2=1$, we compute to obtain
\[ J^n= \tfrac 1 {\lambda_1 - \lambda_2}  \begin{pmatrix} \lambda_1^n(1+\lambda_1) -  \lambda_2^n(1+\lambda_2) & \gamma ( \lambda_1^n - \lambda_2^n )  \\ -\gamma ( \lambda_1^n - \lambda_2^n) & - \lambda_1^{n-1}(1+\lambda_1) +  \lambda_2^{n-1}(1+\lambda_2) \end{pmatrix} .\] 

We let 
\begin{align*}
y_n & :=  \tfrac 1 {\lambda_1 - \lambda_2} ( \lambda_1^n - \lambda_2^n ) =  \lambda_1^{n-1} + \lambda_1^{n-3} + \cdots + \lambda_2^{n-3} +\lambda_2^{n-1} , \\
x_n & :=  \tfrac 1 {\lambda_1 - \lambda_2} (\lambda_1^n(1+\lambda_1) -  \lambda_2^n(1+\lambda_2)) = y_{n+1}+y_n.
\end{align*}
Then we have
$J^n= \begin{pmatrix} x_n & \gamma y_n \\ -\gamma y_n & -x_{n-1} \end{pmatrix}$ for $n>0$.

Next, assume that $\gamma =2$, and let $y_n= n$ and $x_n=2n+1$. Then, from direct computation, we have
\[ J^n = \begin{pmatrix} 2n+1 & 2n \\ - 2n & -2n+1 \end{pmatrix} = \begin{pmatrix} x_n & \gamma y_n \\ -\gamma y_n & -x_{n-1} \end{pmatrix}  \quad \text{ for } n>0, \]
and the same formula for $J^n$ holds in this case as well.

We want to prove \eqref{eqn-fi}, which can be written as
\begin{align*} \gamma \beta_i(\mathfrak w i(ji)^n) & > c_{jk}(\mathfrak w i(ji)^n) \beta_k(\mathfrak w ).  \end{align*} 
If the length of $\mathfrak w$ is less than $3$, i.e. $\mathfrak w = \emptyset, j$ or $ij$, then $\beta_k(\mathfrak w) =0$ and there is nothing to prove. Thus we assume $\mathfrak w = \mathfrak u kij$.

Using the matrices $J^n$, we rewrite the inequality as
\begin{align} \label{eqn-fde} \gamma (x_n \beta_i(\mathfrak w i) - \gamma y_n \beta_j(\mathfrak w i)) & > (x_n c_{jk}(\mathfrak w i) -\gamma y_n c_{ik}(\mathfrak w i)) \beta_k(\mathfrak w ),  \end{align} 
which becomes 
\begin{equation} \label{eqn-su} \gamma ( x_n s_{\mathfrak u} s_ks_is_j(\alpha_i) -\gamma y_n s_{\mathfrak u}s_ks_i(\alpha_j)) > (x_n(c_{jk}+\gamma c_{ik}) -\gamma y_n c_{ik}) s_{\mathfrak u}(\alpha_k) .\end{equation}

We expand each side of \eqref{eqn-su}.
\begin{align*}
\text{LHS} &= \gamma x s_{\mathfrak u} [ (\gamma^2 -1) \alpha_i + \gamma \alpha_j + (\gamma^2 c_{ik} -c_{ik}+ \gamma c_{jk}) \alpha_k) ] \\ & \hskip 1 cm - \gamma^2 y_n s_{\mathfrak u} [ \gamma \alpha_i + \alpha_j + (c_{jk}+ \gamma c_{ik}) \alpha_k ] \\
&= [\gamma x_n (\gamma^2 -1) - \gamma^3 y_n] s_{\mathfrak u}(\alpha_i) + [\gamma^2 x_n - \gamma^2 y_n] s_{\mathfrak u}(\alpha_j) \\
& \hskip 1 cm + [\gamma x_n(\gamma^2 c_{ik} - c_{ik} + \gamma c_{jk}) - \gamma^2 y_n(c_{jk}+ \gamma c_{ik})] s_{\mathfrak u}(\alpha_k),\\
\text{RHS} &= [x_n(c_{jk}+ \gamma c_{ik}) - \gamma y_n c_{ik}] s_{\mathfrak u}(\alpha_k).
\end{align*}
We consider the coefficient of $s_{\mathfrak u}(\alpha_i)$ in LHS and find
\[ x_n(\gamma^2 -1) -\gamma^2 y_n =(y_{n+1}+y_n)(\gamma^2 -1) -\gamma^2 y_n = (\gamma^2 -1) y_{n+1} -y_n \ge 0 \]
since $\gamma \ge 2$ and $y_{n+1} > y_n$.

Recall that we showed $s_{\mathfrak u}(\alpha_j) \ge - s_{\mathfrak u}(\alpha_k)$ in \eqref{eqn-uau}. We combine the coefficients of $s_{\mathfrak u}(\alpha_j)$ and $s_{\mathfrak u}(\alpha_k)$ in LHS and need to prove the following inequality.
\begin{align*}
-\gamma^2 x_n + \gamma^2 y_n + \gamma^3 x_n c_{ik}  - \gamma x_n c_{ik} + \gamma^2 x_n c_{jk} -\gamma^2 y_n c_{jk} -\gamma^3 y_n c_{ik}  \ge  x_n c_{jk} + \gamma x_n c_{ik} - \gamma y_n c_{ik}.
\end{align*}
With $x_n=y_{n+1}+y_n$ substituted, the inequality is equivalent to
\begin{equation} \label{eqn-uu} -\gamma^2 y_{n+1} + (\gamma^3 -2 \gamma) y_{n+1}c_{ik} -\gamma y_{n} c_{ik} + (\gamma^2-1) y_{n+1} c_{jk} - y_{n} c_{jk} \ge 0 . \end{equation}
Using $\gamma, c_{ik}, c_{jk} \ge 2$ and $y_{n+1} > y_n$,
we see that
\begin{align*}
& -\gamma^2 y_{n+1} + (\gamma^3 -2 \gamma) y_{n+1}c_{ik} -\gamma y_{n} c_{ik} + (\gamma^2-1) y_{n+1} c_{jk} - y_{n} c_{jk} \\ \ge & -\gamma^2 y_{n+1} + (\gamma^3 -3 \gamma) y_{n+1}c_{ik}  + (\gamma^2-2) y_{n+1} c_{jk} \\ \ge & -\gamma^2 y_{n+1} + 2(\gamma^3 -3 \gamma) y_{n+1}  + 2(\gamma^2-2) y_{n+1}  =\gamma (2\gamma^2+\gamma -10) y_{n+1} \ge 0.
\end{align*}
Thus the inequality \eqref{eqn-uu}  is proven, so is the inequality \eqref{eqn-fi}.

One can see that $c_{ij}(\tilde{\mathfrak w})=c_{ij} = (\beta_i(\tilde{\mathfrak w}), \beta_j(\tilde{\mathfrak w}))$ by induction. Then a similar computation to \eqref{eqn-main-3} gives us $\beta_j(\tilde{\mathfrak w} j)=s_{\tilde{\mathfrak w}}  (\alpha_j)$.

Now we want to prove 
\begin{align*} \gamma \beta_j(\mathfrak w i(ji)^{n}j) & > c_{ik}(\mathfrak w i(ji)^{n}j) \beta_k(\mathfrak w ),  \end{align*} which is the same as \eqref{eqn-se}.
Since 
\[ J^nJ_j = \begin{pmatrix} x_n & \gamma y_n \\ -\gamma y_n & -x_{n-1} \end{pmatrix} \begin{pmatrix} 1 & \gamma \\ 0 & -1 \end{pmatrix} = \begin{pmatrix} x_n & \gamma y_{n+1} \\ -\gamma y_n & - x_n \end{pmatrix} ,\] 
the inequality can be written as
\begin{align*}  \gamma (\gamma y_{n+1}  \beta_i(\mathfrak w i) - x_n \beta_j(\mathfrak w i)) &> (\gamma y_{n+1}  c_{jk}(\mathfrak w i) -x_n c_{ik}(\mathfrak w i)) \beta_k(\mathfrak w ),  \end{align*} 
and  becomes
\begin{align*}  \gamma (\gamma y_{n+1}  s_{\mathfrak u} s_ks_is_j(\alpha_i) - x_n s_{\mathfrak u}s_ks_i(\alpha_j)) &> (\gamma y_{n+1}  (c_{jk}+\gamma c_{ik}) -x_n c_{ik}) s_{\mathfrak u}(\alpha_k).  \end{align*} 
This can be proven in the same way as we did for \eqref{eqn-su}. Similarly, we obtain $\beta_i(\tilde{\mathfrak w}ji)=s_{\tilde{\mathfrak w}} s_j (\alpha_i)$.

\end{proof}

Let $\tilde{\mathfrak w}= \mathfrak w i(ji)^n$ for $n \in \mathbb Z_{\ge 0}$ and $\mathfrak v = i(ji)^n$. By \eqref{eqn-iji}, we have \[ \phi ( \mathfrak v j) = s_i (s_js_i)^n \alpha_j  \ \text{ and } \ \phi ( \mathfrak v ji) = s_i (s_js_i)^n s_j \alpha_i,\] 
and obtain
\[ \beta_j (\tilde{\mathfrak w}j )  = s_{\tilde{\mathfrak w}} (\alpha_j) = s_{\mathfrak w}\phi ({\mathfrak v} j )= \psi (\mathfrak w{\mathfrak v} j )=\psi(\tilde{\mathfrak w}j)  \ \text{ and } \ \beta_i (\tilde{\mathfrak w}ji )  = s_{\tilde{\mathfrak w}} s_j (\alpha_i) =\psi(\tilde{\mathfrak w}ji) . \]
Thus we have proven \eqref{eqn-mmm} in this case.

\subsection{Case 4: $\ell(\hat{\mathfrak w})=\delta(\hat{\mathfrak w})+1$}

\begin{Lem} \label{pqa}
Let $\tilde{\mathfrak w}= \mathfrak w i(ji)^n$ for $n \in \mathbb Z_{\ge 0}$. Then we have
\begin{align} c_{jk}(\tilde{\mathfrak w}) \beta_j(\tilde{\mathfrak w}) & > c_{ik}(\tilde{\mathfrak w}) \beta_i(\tilde{\mathfrak w}), \quad \beta_k(\tilde{\mathfrak w} k)=s_{\tilde{\mathfrak w}} (s_js_i)^n (\alpha_k),  \\ c_{ik}(\tilde{\mathfrak w}j) \beta_i(\tilde{\mathfrak w}j) & > c_{jk}(\tilde{\mathfrak w}j) \beta_j(\tilde{\mathfrak w}j), \quad \beta_k(\tilde{\mathfrak w}jk)=s_{\tilde{\mathfrak w}} (s_js_i)^{n+1} (\alpha_k).  \end{align} 
\end{Lem}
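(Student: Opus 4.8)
The plan is to run the same machinery as in the proof of Lemma \ref{ppa}, but now following the mutation in the third direction $k$ instead of continuing in $i$ and $j$. The first point to record is that $\beta_k(\tilde{\mathfrak w}) = \beta_k(\mathfrak w)$, since none of the mutations in $i(ji)^n$ acts in direction $k$. Meanwhile, exactly as in Lemma \ref{ppa}, the pair $(\beta_i(\tilde{\mathfrak w}), \beta_j(\tilde{\mathfrak w}))$ and the pair of exchange numbers $(c_{jk}(\tilde{\mathfrak w}), c_{ik}(\tilde{\mathfrak w}))$ are obtained from their base values at $\mathfrak w i$ by right multiplication by $J^n$, where $J = J_jJ_i$ is the matrix computed there. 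Because $B(\tilde{\mathfrak w})$ is a cyclic three-vertex quiver, the vertex $k$ has exactly one in-neighbor and one out-neighbor, so the exchange relation at $k$ pits the single monomial $x_i^{c_{ik}(\tilde{\mathfrak w})}$ against $x_j^{c_{jk}(\tilde{\mathfrak w})}$; the inequality to be proved is precisely the statement that the $j$-monomial wins, i.e. it fixes which term survives in the $d$-vector recurrence.

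First I would prove the inequality $c_{jk}(\tilde{\mathfrak w})\beta_j(\tilde{\mathfrak w}) > c_{ik}(\tilde{\mathfrak w})\beta_i(\tilde{\mathfrak w})$. Substituting the $J^n$-expressions for all four quantities and expanding both sides in the basis $s_{\mathfrak u}(\alpha_i), s_{\mathfrak u}(\alpha_j), s_{\mathfrak u}(\alpha_k)$ reduces the claim to coefficientwise polynomial inequalities in $\gamma = c_{ij}$, $c_{ik}$, $c_{jk}$ and the sequences $x_n, y_n, x_{n-1}$, in the same spirit as the passage \eqref{eqn-su}--\eqref{eqn-uu}. I expect this to be the main obstacle: unlike Lemma \ref{ppa}, where the comparison was between the growing quantity $c_{ij}\beta_i$ and the fixed $c_{jk}\beta_k$, here both sides carry the growing roots $\beta_i(\tilde{\mathfrak w})$ and $\beta_j(\tilde{\mathfrak w})$, so the resulting expression is genuinely bilinear in the $x,y$ data. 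I would control it by the same crude estimates $\gamma, c_{ik}, c_{jk} \ge 2$ and $y_{n+1} > y_n$, after first grouping the $s_{\mathfrak u}(\alpha_j)$- and $s_{\mathfrak u}(\alpha_k)$-coefficients by means of the positivity relation $s_{\mathfrak u}(\alpha_j) + s_{\mathfrak u}(\alpha_k) \ge 0$ from \eqref{eqn-uau}; alternatively, one can induct on $n$ and show the inequality is propagated by one right multiplication by $J$, which avoids the closed form of $J^n$ altogether.

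Once dominance is known, the $d$-vector recurrence at $k$ reads $\beta_k(\tilde{\mathfrak w}k) = -\beta_k(\tilde{\mathfrak w}) + c_{jk}(\tilde{\mathfrak w})\beta_j(\tilde{\mathfrak w})$. To reach the closed form I would verify the inner-product identity $c_{jk}(\tilde{\mathfrak w}) = (\beta_j(\tilde{\mathfrak w}), \beta_k(\tilde{\mathfrak w}))$, with the sign fixed by the cyclic orientation, exactly as $c_{ij}(\mathfrak w i) = (\beta_i(\mathfrak w i), \beta_j(\mathfrak w i))$ was checked inside Lemma \ref{ppa}, so that the recurrence collapses to the reflection $\beta_k(\tilde{\mathfrak w}k) = -r_{\beta_j(\tilde{\mathfrak w})}(\beta_k(\tilde{\mathfrak w}))$ as in \eqref{eqn-main-3}. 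Using $\beta_j(\tilde{\mathfrak w}) = s_{\mathfrak w}s_i(s_js_i)^{n-1}(\alpha_j)$ for $n \ge 1$ (from \eqref{eqn-fi}, the final $i$-mutation leaving $\beta_j$ unchanged; the case $n=0$ being a direct base computation) to write $r_{\beta_j(\tilde{\mathfrak w})}$ as a conjugate of $s_j$, together with $\beta_k(\tilde{\mathfrak w}) = \beta_k(\mathfrak w)$, a short Weyl-group computation then yields $\beta_k(\tilde{\mathfrak w}k) = s_{\mathfrak w}s_i(s_js_i)^{2n}(\alpha_k) = s_{\tilde{\mathfrak w}}(s_js_i)^n(\alpha_k)$. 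This matches the prediction of Theorem \ref{thm-mm}, since writing $s_{\tilde{\mathfrak w}} = s_{\mathfrak w}s_i(s_js_i)^n$ and invoking \eqref{eqn-ijik} identifies the answer with $s_{\mathfrak w}\phi(i(ji)^nk) = \psi(\tilde{\mathfrak w}k)$.

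Finally, the second pair of claims is the same argument carried out one mutation later, at $\tilde{\mathfrak w}j$: here the cyclic orientation between $k$ and $\{i,j\}$ has reversed, so the dominant monomial switches and the inequality becomes $c_{ik}(\tilde{\mathfrak w}j)\beta_i(\tilde{\mathfrak w}j) > c_{jk}(\tilde{\mathfrak w}j)\beta_j(\tilde{\mathfrak w}j)$. One replaces $J^n$ by $J^nJ_j$ (already computed in the proof of \eqref{eqn-se}), repeats the expansion and the reflection computation with the roles of $i$ and $j$ interchanged, and obtains $\beta_k(\tilde{\mathfrak w}jk) = s_{\tilde{\mathfrak w}}(s_js_i)^{n+1}(\alpha_k)$, which equals $\psi(\tilde{\mathfrak w}jk)$ by \eqref{eqn-ijik}. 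The only genuinely new work beyond Lemma \ref{ppa} is the bilinear inequality of the second paragraph; everything else is bookkeeping of pre- versus post-mutation $d$-vectors and a single reflection identity.
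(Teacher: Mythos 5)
Your overall architecture coincides with the paper's proof: transport $(\beta_i,\beta_j)$ and $(c_{jk},c_{ik})$ by $J^n$ (resp. $J^nJ_j$), prove the dominance inequality, verify $c_{jk}(\tilde{\mathfrak w})=(\beta_j(\tilde{\mathfrak w}),\beta_k(\tilde{\mathfrak w}))$ so that the exchange recurrence collapses to $-r_{\beta_j(\tilde{\mathfrak w})}(\beta_k(\tilde{\mathfrak w}))$ as in \eqref{eqn-main-3}, and finish with the Weyl-group computation and the identification with $\psi$ via \eqref{eqn-ijik}. All of that is exactly what the paper does.

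The gap is at the step you yourself flag as the main obstacle, and your primary plan for it would fail. The paper's resolution is not an estimate but an exact unimodularity observation: $c_{jk}\beta_j-c_{ik}\beta_i$ is the antisymmetric pairing of the two row vectors $(c_{jk},c_{ik})$ and $(\beta_i,\beta_j)$, and since both transform by right multiplication by the \emph{same} matrix $J^n$, the pairing gets multiplied by $\det J^n=\gamma^2y_n^2-x_nx_{n-1}=1$. Hence for every $n$ one has, as an exact identity in the root lattice,
\begin{equation*}
c_{jk}(\tilde{\mathfrak w})\beta_j(\tilde{\mathfrak w})-c_{ik}(\tilde{\mathfrak w})\beta_i(\tilde{\mathfrak w})
= c_{jk}(\mathfrak w i)\beta_j(\mathfrak w i)-c_{ik}(\mathfrak w i)\beta_i(\mathfrak w i)
= c_{jk}\beta_j(\mathfrak w)+c_{ik}\beta_i(\mathfrak w) ,
\end{equation*}
which is the two-line $n=0$ computation \eqref{eqn-true}; likewise $\det(J^nJ_j)=-1$ is precisely what reverses the inequality at $\tilde{\mathfrak w}j$ (your ``orientation has reversed'' heuristic). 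This identity also shows why your plan of ``coefficientwise polynomial inequalities controlled by crude estimates $\gamma,c_{ik},c_{jk}\ge 2$, $y_{n+1}>y_n$'' cannot work here: unlike Lemma \ref{ppa}, where the gap between the two sides grows with $n$, here the gap is \emph{constant} in $n$ while both sides grow like $\lambda_1^{2n}$, so any lossy bound destroys the conclusion; the exponentially large cross terms must cancel exactly, and only the determinant identity sees that. Your fallback (induction on $n$, propagating the inequality through one multiplication by $J$) is viable, but it is unjustified as stated: the only reason propagation holds is again that $\det J=1$ preserves, and $\det J_j=-1$ flips, the pairing --- which is the very fact missing from the proposal --- and it additionally needs the $n=0$ base case, which you never establish for the inequality (only for the $\beta_j$ formula).
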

This means that 
 we have \[  \xrightarrow[{\ [j] \ }]{i} \tilde{\mathfrak w} \xrightarrow[{\ [j] \ }]{k} \tilde{\mathfrak w}k \quad \text{ and } \quad  \xrightarrow[{\ [i] \ }]{j} \tilde{\mathfrak w}j \xrightarrow[{\ [i] \ }]{k} \tilde{\mathfrak w}jk .\]

\begin{proof} 

First, we consider the case $n=0$ and see
\begin{align*}
c_{jk}(\mathfrak w i) \beta_j(\mathfrak w i) &= (c_{jk}+c_{ij}c_{ik}) \beta_j(\mathfrak w) = c_{jk} \beta_j(\mathfrak w) + c_{ij}c_{ik}\beta_j(\mathfrak w), \\
c_{ik}(\mathfrak w i) \beta_i(\mathfrak w i) &= c_{ik}( -\beta_i(\mathfrak w) + c_{ij}\beta_j(\mathfrak w) ) = - c_{ik} \beta_i(\mathfrak w) + c_{ik}c_{ij}\beta_j(\mathfrak w).
\end{align*}
Thus we have 
\begin{equation} \label{eqn-true}
c_{jk}(\mathfrak w i)\beta_j(\mathfrak w i) > c_{ik}(\mathfrak w i) \beta_i(\mathfrak w i).
\end{equation} 
We claim that $c_{jk}(\mathfrak w i) = (\beta_k(\mathfrak w i), \beta_j(\mathfrak w i))$. Indeed, if the length of $\mathfrak w $ is greater than $3$, we have
\begin{align*} (\beta_k(\mathfrak w i), \beta_j(\mathfrak w i)) &= (s_{\mathfrak w} s_j s_i s_k (\alpha_k), s_{\mathfrak w} s_j (\alpha_j))= (s_is_k(\alpha_k), \alpha_j) \\ &=-(\alpha_k + c_{ik}\alpha_i, \alpha_j) = c_{jk} + c_{ij}c_{ik} = c_{jk}(\mathfrak w i). \end{align*} Otherwise, it can be checked easily.
Then we obtain
\begin{align*} 
\beta_k(\mathfrak w ik)&=-\beta_k(\mathfrak w i) +c_{jk}(\mathfrak w i) \beta_j(\mathfrak w i)
= -\beta_k(\mathfrak w i) +(\beta_k(\mathfrak w i), \beta_j(\mathfrak w i)) \beta_j(\mathfrak w i) \\ & = -r_{\beta_j(\mathfrak w i)}(\beta_k(\mathfrak w i)) = -s_{\mathfrak w} s_j s_{\mathfrak w}^{-1} \, s_{\mathfrak w} s_j s_i s_k (\alpha_k) = s_{\mathfrak w} s_i (\alpha_k).
\end{align*}

Now assume $n>0$. Using the matrix $J^n= \begin{pmatrix} x_n & \gamma y_n \\ -\gamma y_n & -x_{n-1} \end{pmatrix}$ defined in the proof of Lemma \ref{ppa}, the inequality  $c_{jk}(\tilde{\mathfrak w}) \beta_j(\tilde{\mathfrak w}) > c_{ik}(\tilde{\mathfrak w}) \beta_i(\tilde{\mathfrak w})$ can be written as
\begin{align*}  & (x_n c_{jk}(\mathfrak w i) - \gamma y_n c_{ik}(\mathfrak w i)) ( \gamma y_n \beta_i(\mathfrak w i) - x_{n-1} \beta_j(\mathfrak w i))\\ & \hskip 3 cm > (\gamma y_n c_{jk}(\mathfrak w i) - x_{n-1} c_{ik}(\mathfrak w i)) ( x_n \beta_i(\mathfrak w i) -\gamma y_n \beta_j(\mathfrak w i)), \end{align*}
which is equivalent to 
\[ (\gamma^2 y_n^2 -x_n x_{n-1}) c_{jk}(\mathfrak w i) \beta_j(\mathfrak w i) > (\gamma^2 y_n^2 -x_n x_{n-1}) c_{ik}(\mathfrak w i) \beta_i(\mathfrak w i) .\]
Since $\gamma^2 y_n^2 -x_n x_{n-1}=\det J^n=1$, this inequality is the same as \eqref{eqn-true} and we are done. 

We claim that $c_{jk}(\tilde{\mathfrak w}) = (\beta_j(\tilde{\mathfrak w}), \beta_k(\tilde{\mathfrak w}))$. Indeed, we have
\begin{align*} c_{jk}(\tilde{\mathfrak w}) & = x_n c_{jk}(\mathfrak w i) - \gamma y_n c_{ik}(\mathfrak w i) \ \text{ and } \\ (\beta_j(\tilde{\mathfrak w}), \beta_k(\tilde{\mathfrak w})) & = (\gamma y_n \beta_i (\mathfrak w i) - x_{n-1} \beta_j(\mathfrak w i), \beta_k(\mathfrak w i))= \gamma y_n (\beta_i(\mathfrak w i), \beta_k (\mathfrak w i))-x_{n-1} c_{jk}(\mathfrak w i). \end{align*}
Since $x_n+x_{n-1}=\gamma^2 y_n$, $c_{jk}(\mathfrak w i) = c_{jk}+\gamma c_{ik}$ and $(\beta_i(\mathfrak w i), \beta_k (\mathfrak w i))= -c_{ik}+ \gamma c_{jk} + \gamma^2 c_{ik}$, one sees that the claim holds.
Then  we obtain
\begin{align*}
\beta_k(\tilde{\mathfrak w} k) &= -\beta_k(\tilde{\mathfrak w}) + c_{jk}(\tilde{\mathfrak w})\beta_j(\tilde{\mathfrak w}) = -\beta_k(\tilde{\mathfrak w}) + (\beta_j(\tilde{\mathfrak w}), \beta_k(\tilde{\mathfrak w})) \beta_j(\tilde{\mathfrak w}) = -r_{\beta_j(\tilde{\mathfrak w})}(\beta_k(\tilde{\mathfrak w})) \\ & = -s_{\mathfrak w} s_i (s_js_i)^{n-1} s_j (s_is_j)^{n-1}s_i s_{\mathfrak w}^{-1} s_{\mathfrak u}(\alpha_k) = -s_{\mathfrak w} s_i (s_js_i)^{n-1} s_j (s_is_j)^{n-1}s_i s_js_is_ks_{\mathfrak u}^{-1}  s_{\mathfrak u}(\alpha_k) \\ &= s_{\mathfrak w} s_i (s_js_i)^{n-1} s_j (s_is_j)^{n-1}s_i s_js_i (\alpha_k) = s_{\tilde{\mathfrak w}} (s_js_i)^n (\alpha_k),
\end{align*}
where we write $\mathfrak w = \mathfrak u kij$ as before. 

Similarly, the inequality $c_{ik}(\tilde{\mathfrak w}j) \beta_i(\tilde{\mathfrak w}j) > c_{jk}(\tilde{\mathfrak w}j) \beta_j(\tilde{\mathfrak w}j) $
can be proven in the same way, using the matrix $J^nJ_j = \begin{pmatrix} x_n & \gamma y_{n+1} \\ -\gamma y_n & - x_n \end{pmatrix}$ and $\det(J^n J_j) =-1$. Furthermore, we see that 
\[ c_{ik}(\tilde{\mathfrak w} j) = ( \beta_i(\tilde{\mathfrak w}j), \beta_k(\tilde{\mathfrak w}j)) \] and obtain
\begin{align*}
\beta_k(\tilde{\mathfrak w}j k) &= -r_{\beta_i(\tilde{\mathfrak w}j)}(\beta_k(\tilde{\mathfrak w})j) = -s_{\mathfrak w} s_i (s_js_i)^{n-1} s_j s_i s_j (s_is_j)^{n-1}s_i s_{\mathfrak w}^{-1} s_{\mathfrak u}(\alpha_k) \\
&= s_{\mathfrak w} s_i (s_js_i)^{n-1} s_j s_i s_j (s_is_j)^{n-1}s_i s_js_i (\alpha_k) = s_{\tilde{\mathfrak w}} (s_js_i)^{n+1} (\alpha_k).
\end{align*} 

\end{proof}

Let $\tilde{\mathfrak w}= \mathfrak w i(ji)^n$ for $n \in \mathbb Z_{\ge 0}$ and $\mathfrak v = i(ji)^n$. By \eqref{eqn-ijik}, we have \[ \phi ( \mathfrak v k) = s_i (s_js_i)^{2n} \alpha_k  \ \text{ and } \ \phi ( \mathfrak v jk) = s_i (s_js_i)^{2n+1} \alpha_k,\] 
and obtain
\[ \beta_k (\tilde{\mathfrak w}k )  = s_{\tilde{\mathfrak w}} (s_js_i)^n (\alpha_k) = s_{\mathfrak w} \phi ({\mathfrak v} k )=\psi(\tilde{\mathfrak w}k)  \ \text{ and } \ \beta_k (\tilde{\mathfrak w}jk ) = s_{\tilde{\mathfrak w}} (s_js_i)^{n+1} (\alpha_k) = \psi(\tilde{\mathfrak w}jk). \]
Thus we have proven \eqref{eqn-mmm} in this case.

Before we go to the next case, we list the values of the bilinear form for various roots. Some of them have already been  proved in the proof of Lemma \ref{pqa}. As the others can be easily checked, we omit the details.
\begin{Cor} \label{cor-var}
We have
\begin{align*}
(\beta_i(\tilde{\mathfrak w}), \beta_j(\tilde{\mathfrak w}))&=c_{ij}(\tilde{\mathfrak w}),
& (\beta_i(\tilde{\mathfrak w}j), \beta_j(\tilde{\mathfrak w}j))&=c_{ij}(\tilde{\mathfrak w}j),\\
(\beta_i(\tilde{\mathfrak w}), \beta_k(\tilde{\mathfrak w}))&=-c_{ik}(\tilde{\mathfrak w})+c_{ij}(\tilde{\mathfrak w})c_{jk}(\tilde{\mathfrak w}),
& (\beta_i(\tilde{\mathfrak w}j), \beta_k(\tilde{\mathfrak w}j))&=c_{ik}(\tilde{\mathfrak w}j),\\
(\beta_j(\tilde{\mathfrak w}), \beta_k(\tilde{\mathfrak w}))&=c_{jk}(\tilde{\mathfrak w}),
& (\beta_j(\tilde{\mathfrak w}j), \beta_k(\tilde{\mathfrak w}j))&=-c_{jk}(\tilde{\mathfrak w}j)+c_{ij}(\tilde{\mathfrak w}j)c_{ik}(\tilde{\mathfrak w}j).
\end{align*}
\end{Cor}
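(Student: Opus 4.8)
The plan is to verify the six identities by reducing every pairing, via the $W$-invariance $(w\mu,w\nu)=(\mu,\nu)$ of the form together with the values $(\alpha_p,\alpha_p)=2$ and $(\alpha_p,\alpha_q)=-c_{pq}$ for $p\neq q$, to computations at the base seed $\mathfrak w i$ (the case $n=0$). The crucial observation is that along the word $i(ji)^n$ the root $\beta_k$ and the entry $c_{ij}=\gamma$ stay constant, while the pair $(\beta_i,\beta_j)$ and the pair $(c_{jk},c_{ik})$ are both transformed by the same power $J^n$ of the matrix introduced in Lemma~\ref{ppa}. Hence, by bilinearity, each target pairing $(\beta_p(\tilde{\mathfrak w}),\beta_q(\tilde{\mathfrak w}))$ and each target value $c_{pq}(\tilde{\mathfrak w})$ is a linear combination, with coefficients $x_n,y_n,x_{n-1}$, of the corresponding quantities at $\mathfrak w i$; matching the two expansions will come down to the scalar relations $x_n+x_{n-1}=\gamma^2 y_n$ and $\det J^n=1$ recorded there.

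First I would settle the base seed. Using $\beta_i(\mathfrak w i)=s_{\mathfrak w}(\alpha_i)$, $\beta_j(\mathfrak w i)=s_{\mathfrak w}s_j(\alpha_j)$ and $\beta_k(\mathfrak w i)=s_{\mathfrak w}s_js_is_k(\alpha_k)$ from \eqref{eqn-main-2} and the proof of Lemma~\ref{pqa}, and stripping the common factor $s_{\mathfrak w}$, each $n=0$ pairing collapses to a pairing of two short words applied to simple roots. For instance $(\beta_i(\mathfrak w i),\beta_j(\mathfrak w i))=(\alpha_i,s_j\alpha_j)=-(\alpha_i,\alpha_j)=c_{ij}$, while the same manipulation gives $(\beta_j(\mathfrak w i),\beta_k(\mathfrak w i))=c_{jk}+\gamma c_{ik}=c_{jk}(\mathfrak w i)$ and $(\beta_i(\mathfrak w i),\beta_k(\mathfrak w i))=-c_{ik}+\gamma c_{jk}+\gamma^2 c_{ik}$. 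Several of these equalities have in fact already been extracted in the proofs of Lemmas~\ref{lem-ini} and \ref{pqa}, where they were needed to rewrite $c_{pq}(\cdot)$ as a value of the form before applying $r_\beta$; the remaining ones are obtained by the identical one-line computation.

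It then remains to propagate to general $n$. Writing $\beta_i(\tilde{\mathfrak w})=x_n\beta_i(\mathfrak w i)-\gamma y_n\beta_j(\mathfrak w i)$ and $\beta_k(\tilde{\mathfrak w})=\beta_k(\mathfrak w i)$, bilinearity turns the cyclic identity $(\beta_i(\tilde{\mathfrak w}),\beta_k(\tilde{\mathfrak w}))=-c_{ik}(\tilde{\mathfrak w})+\gamma\,c_{jk}(\tilde{\mathfrak w})$ into an equality between two explicit polynomials in $x_n,y_n,x_{n-1},c_{ik},c_{jk},\gamma$; substituting $c_{jk}(\tilde{\mathfrak w})=x_n c_{jk}(\mathfrak w i)-\gamma y_n c_{ik}(\mathfrak w i)$ and $c_{ik}(\tilde{\mathfrak w})=\gamma y_n c_{jk}(\mathfrak w i)-x_{n-1}c_{ik}(\mathfrak w i)$ and cancelling with $x_n+x_{n-1}=\gamma^2 y_n$ makes the two sides agree. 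The three ``direct'' identities are shorter still, and the entire $\tilde{\mathfrak w}j$ column is handled verbatim with $J^n$ replaced by $J^nJ_j$ (so $\det=-1$). The one genuine point requiring care, and the place where an error is most likely, is the orientation bookkeeping: since $B(\mathfrak w i)$ is cyclic, one must correctly decide, pair by pair, which product $(\beta_p,\beta_q)$ equals the bare $c_{pq}$ and which acquires the $-c_{pq}+c\cdot c$ correction. This is forced by the mutation rule at the base seed, and once it is fixed for $n=0$ the matrix recursion transports it unchanged to all $n$.
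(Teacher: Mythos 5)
Your proposal is correct and follows essentially the same route as the paper: the paper itself omits the details of this corollary, pointing to the computations in the proofs of Lemmas \ref{ppa} and \ref{pqa}, which are exactly your two ingredients — base-seed evaluations at $\mathfrak w i$ via $W$-invariance of the form (e.g.\ $(\beta_i(\mathfrak w i),\beta_k(\mathfrak w i))=-c_{ik}+\gamma c_{jk}+\gamma^2c_{ik}$), followed by propagation through the $J^n$ (resp.\ $J^nJ_j$) recursion using $x_n+x_{n-1}=\gamma^2y_n$ and $\det J^n=1$ (resp.\ $\det(J^nJ_j)=-1$). Your closing remark on which pairings acquire the $-c_{pq}+c\cdot c$ correction matches the paper's statement, and the identities check out as you describe.
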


\subsection{Case 5: $\ell(\hat{\mathfrak w})=\delta(\hat{\mathfrak w})+2$}

\begin{Lem} \label{lem-wwaa}
We have
\[ \xymatrix{ & & & \tilde{\mathfrak w} k i & \\   \ar[r]^{i}_{[j]} & \tilde{\mathfrak w} \ar[r]^{k}_{[j]} & \tilde{\mathfrak w} k \ar[ru]^i_{[k]} \ar[rd]^j_{[k]} &  & \text{and } \quad  \\  & & & \tilde{\mathfrak w} k j  & }   
 \xymatrix{ & & & \tilde{\mathfrak w} jk i \\   \ar[r]^{j}_{[i]} & \tilde{\mathfrak w}j \ar[r]^{k}_{[i]} & \tilde{\mathfrak w}j k \ar[ru]^i_{[k]} \ar[rd]^j_{[k]} &  \\  & & & \tilde{\mathfrak w} j k j }  \]
\end{Lem}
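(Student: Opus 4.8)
The two diagrams encode four dominance facts (the four $[k]$-labels) together with the implicit computation of the four terminal roots. The arrows into $\tilde{\mathfrak w}k$ and into $\tilde{\mathfrak w}jk$, as well as the values $\beta_k(\tilde{\mathfrak w}k)=s_{\tilde{\mathfrak w}}(s_js_i)^n(\alpha_k)$ and $\beta_k(\tilde{\mathfrak w}jk)=s_{\tilde{\mathfrak w}}(s_js_i)^{n+1}(\alpha_k)$, are already supplied by Lemma \ref{pqa}; and since a mutation at $k$ fixes the other two roots, I also have $\beta_i(\tilde{\mathfrak w}k)=\beta_i(\tilde{\mathfrak w})$, $\beta_j(\tilde{\mathfrak w}k)=\beta_j(\tilde{\mathfrak w})$, and the analogous identities at $\tilde{\mathfrak w}jk$. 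Spelled out, the first diagram asserts
\[ c_{ik}(\tilde{\mathfrak w}k)\,\beta_k(\tilde{\mathfrak w}k)\ge c_{ij}(\tilde{\mathfrak w}k)\,\beta_j(\tilde{\mathfrak w}k), \qquad c_{jk}(\tilde{\mathfrak w}k)\,\beta_k(\tilde{\mathfrak w}k)\ge c_{ij}(\tilde{\mathfrak w}k)\,\beta_i(\tilde{\mathfrak w}k), \]
and the second diagram is the same pair with $\tilde{\mathfrak w}k$ replaced by $\tilde{\mathfrak w}jk$.

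The plan is to prove these four inequalities following the template of Lemmas \ref{ppa} and \ref{pqa}. First I would compute the exchange numbers $c_{ik}(\tilde{\mathfrak w}k),c_{jk}(\tilde{\mathfrak w}k),c_{ij}(\tilde{\mathfrak w}k)$ by one matrix mutation of $B(\tilde{\mathfrak w})$, and then express them, together with $\beta_i(\tilde{\mathfrak w}),\beta_j(\tilde{\mathfrak w})$ and $\beta_k(\tilde{\mathfrak w}k)$, through the corresponding data at $\mathfrak w i$ by means of the matrices $J^n$ and $J^nJ_j$ introduced in the proof of Lemma \ref{ppa}. Writing $\mathfrak w=\mathfrak u kij$ and expanding in the basis $\{s_{\mathfrak u}(\alpha_i),s_{\mathfrak u}(\alpha_j),s_{\mathfrak u}(\alpha_k)\}$, the decisive feature is that $\beta_k(\tilde{\mathfrak w}k)$ is long, so its coefficients outweigh those of $\beta_i,\beta_j$. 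As in Lemma \ref{ppa}, I would dispatch the coefficient of $s_{\mathfrak u}(\alpha_i)$ by the positivity $y_{n+1}>y_n$, and combine the coefficients of $s_{\mathfrak u}(\alpha_j)$ and $s_{\mathfrak u}(\alpha_k)$ using $s_{\mathfrak u}(\alpha_j)\ge -s_{\mathfrak u}(\alpha_k)$ from \eqref{eqn-uau}; this turns each inequality into an elementary polynomial inequality in $x_n,y_n$ and the $c_{pq}\ge 2$. Exactly as in Lemma \ref{pqa}, I expect the identities $\det J^n=1$ and $\det(J^nJ_j)=-1$ to collapse the general-$n$ statement to the base case $n=0$, which is then checked by hand.

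With the labels established, the terminal roots are obtained by converting the last mutation into a reflection, just as in \eqref{eqn-main-3} and Lemma \ref{pqa}. For this I would verify the compatibilities $c_{ik}(\tilde{\mathfrak w}k)=(\beta_i(\tilde{\mathfrak w}k),\beta_k(\tilde{\mathfrak w}k))$ and $c_{jk}(\tilde{\mathfrak w}k)=(\beta_j(\tilde{\mathfrak w}k),\beta_k(\tilde{\mathfrak w}k))$, which extend Corollary \ref{cor-var} by one more mutation (again reducing to $n=0$ via $\det J^n=1$), together with their $\tilde{\mathfrak w}jk$ analogues. The $[k]$-domination then yields, for instance,
\[ \beta_i(\tilde{\mathfrak w}ki)=-\beta_i(\tilde{\mathfrak w}k)+(\beta_i(\tilde{\mathfrak w}k),\beta_k(\tilde{\mathfrak w}k))\,\beta_k(\tilde{\mathfrak w}k)=-r_{\beta_k(\tilde{\mathfrak w}k)}\bigl(\beta_i(\tilde{\mathfrak w}k)\bigr), \]
and substituting the explicit expression $r_{\beta_k(\tilde{\mathfrak w}k)}=s_{\tilde{\mathfrak w}}(s_js_i)^n s_k (s_is_j)^n s_{\tilde{\mathfrak w}}^{-1}$ and simplifying the resulting word should give $\beta_i(\tilde{\mathfrak w}ki)=s_{\mathfrak w}\,\phi(i(ji)^nki)=\psi(\tilde{\mathfrak w}ki)$ by \eqref{eqn-ijiki}. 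The roots $\beta_j(\tilde{\mathfrak w}kj),\beta_i(\tilde{\mathfrak w}jki),\beta_j(\tilde{\mathfrak w}jkj)$ follow in the same way from \eqref{eqn-ijiki}--\eqref{eqn-ijiki-3}, thereby establishing \eqref{eqn-mmm} in Case 5.

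I expect the main obstacle to be the bookkeeping in the four inequalities: after the extra mutation at $k$ the numbers $c_{pk}(\tilde{\mathfrak w}k)$ are larger and couple the three roots more tightly than in Lemmas \ref{ppa} and \ref{pqa}, so the base-case polynomial inequalities in the $c_{pq}$ are more delicate to sign-check. The only conceptual inputs, however, should remain the reduction $\det J^n=1$ and the single structural estimate \eqref{eqn-uau}; everything else is elementary given $c_{pq}\ge 2$.
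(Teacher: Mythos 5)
Your reading of the two diagrams is correct (the new content is the four $[k]$-labelled inequalities, the rest being supplied by Lemmas \ref{ppa} and \ref{pqa}), and your plan would eventually close, but it takes a genuinely different and far heavier route than the paper's. The paper's proof of Lemma \ref{lem-wwaa} involves \emph{no} new $J^n$ computation, no expansion in the basis $\{s_{\mathfrak u}(\alpha_i),s_{\mathfrak u}(\alpha_j),s_{\mathfrak u}(\alpha_k)\}$, no use of \eqref{eqn-uau}, and no base case: it only expands the single mutation at $k$ and then quotes the earlier lemmas as black boxes. Concretely, writing all quantities at $\tilde{\mathfrak w}$, one has $c_{ik}(\tilde{\mathfrak w}k)\beta_k(\tilde{\mathfrak w}k)=-c_{ik}\beta_k+c_{ik}c_{jk}\beta_j$ and $c_{ij}(\tilde{\mathfrak w}k)\beta_j(\tilde{\mathfrak w}k)=-c_{ij}\beta_j+c_{ik}c_{jk}\beta_j$, so the large common term $c_{ik}c_{jk}\beta_j$ cancels and \eqref{eqn-iio} is \emph{equivalent} to $c_{ij}\beta_j(\tilde{\mathfrak w})\ge c_{ik}\beta_k(\tilde{\mathfrak w})$, which is precisely \eqref{eqn-se} of Lemma \ref{ppa} applied one step earlier (mutation at $i$ leaves $c_{ij},c_{ik},\beta_j,\beta_k$ untouched); likewise the difference in \eqref{eqn-iioo} equals $(c_{ij}\beta_i-c_{jk}\beta_k)+c_{jk}(c_{jk}\beta_j-c_{ik}\beta_i)$, a nonnegative combination of the differences furnished by \eqref{eqn-fi} and by Lemma \ref{pqa}, and the $\tilde{\mathfrak w}jk$ cases are handled identically. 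Your brute-force route of re-running the $J^n$, $J^nJ_j$ expansions would ultimately succeed, exactly because the four differences are nonnegative combinations of differences that were themselves proven by that method; but two caveats. First, your hoped-for determinant collapse to $n=0$ cannot carry these inequalities: they involve $\beta_k$ and $c_{ij}$, which are inert under the $(ij)$-mutations, so the relevant differences are not of the $\det J^n$ form that made Lemma \ref{pqa} collapse — you would be forced into full expansions with coefficients quadratic in $x_n,y_n$, which is just the delicate bookkeeping you anticipate. Second, that bookkeeping is exactly what the paper's cancellation renders unnecessary, so the worry you flag at the end is an artifact of the chosen route, not of the statement. One genuine plus on your side: you also prove the terminal-root formulas via $c_{pk}(\tilde{\mathfrak w}k)=(\beta_p(\tilde{\mathfrak w}k),\beta_k(\tilde{\mathfrak w}k))$ and $\beta_p(\tilde{\mathfrak w}kp)=-r_{\beta_k(\tilde{\mathfrak w}k)}\bigl(\beta_p(\tilde{\mathfrak w}k)\bigr)$, matching the device of \eqref{eqn-main-3} and Corollary \ref{cor-abc}; the paper defers these formulas to Corollary \ref{cor-im} without proof, so supplying that argument is a useful addition.
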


\begin{proof}
First, we prove
\begin{equation} \label{eqn-iio} c_{ik}(\tilde{\mathfrak w} k) \beta_k(\tilde{\mathfrak w} k)  \ge c_{ij}(\tilde{\mathfrak w} k) \beta_j (\tilde{\mathfrak w}k ) .\end{equation}
We compute
\begin{align*}
c_{ik}(\tilde{\mathfrak w} k) \beta_k(\tilde{\mathfrak w} k) &= c_{ik}(\tilde{\mathfrak w})( -\beta_k(\tilde{\mathfrak w}) + c_{jk}(\tilde{\mathfrak w}) \beta_j(\tilde{\mathfrak w}) )= - c_{ik}(\tilde{\mathfrak w}) \beta_k(\tilde{\mathfrak w}) + c_{ik}(\tilde{\mathfrak w}) c_{jk}(\tilde{\mathfrak w}) \beta_j(\tilde{\mathfrak w}), \\
c_{ij}(\tilde{\mathfrak w} k) \beta_j (\tilde{\mathfrak w}j ) &= (-c_{ij}(\tilde{\mathfrak w})+c_{ik}(\tilde{\mathfrak w})c_{jk}(\tilde{\mathfrak w})) \beta_j(\tilde{\mathfrak w}) = -c_{ij}(\tilde{\mathfrak w})\beta_j(\tilde{\mathfrak w})+c_{ik}(\tilde{\mathfrak w})c_{jk}(\tilde{\mathfrak w})\beta_j(\tilde{\mathfrak w}).
\end{align*}
Since we have
\begin{align*}
& c_{ik}(\tilde{\mathfrak w}) \beta_k(\tilde{\mathfrak w}) = c_{ik}(\mathfrak w i (ji)^{n-1}j) \beta_k (\mathfrak w i (ji)^{n-1}j) \\ & \hskip 2 cm \le c_{ij}(\mathfrak w i (ji)^{n-1}j) \beta_j(\mathfrak w i (ji)^{n-1}j)= c_{ij}(\tilde{\mathfrak w})\beta_j(\tilde{\mathfrak w}),
\end{align*}
we see that the inequality \eqref{eqn-iio} holds.

Next, we prove
\begin{equation} \label{eqn-iioo} c_{jk}(\tilde{\mathfrak w} k) \beta_k(\tilde{\mathfrak w} k)  \ge c_{ij}(\tilde{\mathfrak w} k) \beta_i (\tilde{\mathfrak w}k ) .\end{equation}
We compute
\begin{align*}
c_{jk}(\tilde{\mathfrak w} k) \beta_k(\tilde{\mathfrak w} k) &=  - c_{jk}(\tilde{\mathfrak w}) \beta_k(\tilde{\mathfrak w}) + c_{jk}(\tilde{\mathfrak w})^2 \beta_j(\tilde{\mathfrak w}), \\
c_{ij}(\tilde{\mathfrak w} k) \beta_i (\tilde{\mathfrak w}j ) &=  -c_{ij}(\tilde{\mathfrak w})\beta_i(\tilde{\mathfrak w})+c_{ik}(\tilde{\mathfrak w})c_{jk}(\tilde{\mathfrak w})\beta_i(\tilde{\mathfrak w}).
\end{align*}
Since have $c_{jk}(\tilde{\mathfrak w}) \beta_j(\tilde{\mathfrak w}) \ge c_{ik}(\tilde{\mathfrak w})\beta_i(\tilde{\mathfrak w})$ by Lemma \ref{pqa} and $c_{jk}(\tilde{\mathfrak w}) \beta_k(\tilde{\mathfrak w}) \le c_{ij}(\tilde{\mathfrak w})\beta_i(\tilde{\mathfrak w})$ by Lemma \ref{ppa}, the inequality \eqref{eqn-iioo} is proven.

In a similar way, one can prove
\begin{align*}  c_{ik}(\tilde{\mathfrak w}j k) \beta_k(\tilde{\mathfrak w} jk)  &\ge c_{ij}(\tilde{\mathfrak w} jk) \beta_j (\tilde{\mathfrak w}jk ) , \\
c_{jk}(\tilde{\mathfrak w}j k) \beta_k(\tilde{\mathfrak w} jk)  &\ge c_{ij}(\tilde{\mathfrak w} jk) \beta_i (\tilde{\mathfrak w}jk ) ,
\end{align*}
establishing the diagram.
\end{proof}

\begin{Cor} \label{cor-im}
We have
\begin{align*}
\beta_i(\tilde{\mathfrak w} ki) &= s_{\tilde{\mathfrak w}}(s_js_i)^n s_k (s_i s_j)^n (\alpha_i), &
\beta_j(\tilde{\mathfrak w} kj) &= s_{\tilde{\mathfrak w}}(s_js_i)^n s_k (s_i s_j)^n s_i(\alpha_j), \\ 
\beta_i(\tilde{\mathfrak w} jki) &= s_{\tilde{\mathfrak w}}(s_js_i)^{n+1} s_k (s_i s_j)^{n+1} (\alpha_i), &
\beta_j(\tilde{\mathfrak w} jkj) &= s_{\tilde{\mathfrak w}}(s_js_i)^{n+1} s_k (s_i s_j)^{n}s_i (\alpha_j).
\end{align*}
\end{Cor}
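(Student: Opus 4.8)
The plan is to read off each of the four roots as the result of a \emph{single} mutation applied to a seed whose roots were already computed in Lemmas~\ref{ppa} and~\ref{pqa}, and then to recognize the outcome as a reflection of a known root, exactly in the spirit of the proofs of Lemmas~\ref{lem-ini}, \ref{ppa}, and~\ref{pqa}. What makes this mechanical is the diagram of Lemma~\ref{lem-wwaa}: the label $[k]$ attached to each of the four arrows records which of the two competing summands dominates in the denominator recursion, so the mutated denominator is governed by a single positive term. For instance, the arrow from $\tilde{\mathfrak w}k$ to $\tilde{\mathfrak w}ki$ carries the label $[k]$, which yields
\[ \beta_i(\tilde{\mathfrak w}ki) = -\beta_i(\tilde{\mathfrak w}k) + c_{ik}(\tilde{\mathfrak w}k)\,\beta_k(\tilde{\mathfrak w}k), \]
and the other three arrows give the analogous one-term formulas with the indices permuted accordingly.

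Second, I would apply the reflection trick. Granting the bilinear-form identity $c_{ik}(\tilde{\mathfrak w}k) = (\beta_i(\tilde{\mathfrak w}k), \beta_k(\tilde{\mathfrak w}k))$, the formula above becomes $\beta_i(\tilde{\mathfrak w}ki) = -r_{\beta_k(\tilde{\mathfrak w}k)}(\beta_i(\tilde{\mathfrak w}k))$. I then substitute the roots already known. Mutation at $k$ fixes the $i$-th root, so $\beta_i(\tilde{\mathfrak w}k) = \beta_i(\tilde{\mathfrak w}) = -s_{\tilde{\mathfrak w}}(\alpha_i)$, which follows from \eqref{eqn-se} together with $s_{\tilde{\mathfrak w}} = s_{\mathfrak w i(ji)^{n-1}}\,s_j s_i$; and $\beta_k(\tilde{\mathfrak w}k) = s_{\tilde{\mathfrak w}}(s_js_i)^n(\alpha_k)$ by Lemma~\ref{pqa}, so that $r_{\beta_k(\tilde{\mathfrak w}k)} = s_{\tilde{\mathfrak w}}(s_js_i)^n s_k (s_is_j)^n s_{\tilde{\mathfrak w}}^{-1}$. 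Cancelling the conjugating factors gives
\[ \beta_i(\tilde{\mathfrak w}ki) = s_{\tilde{\mathfrak w}}(s_js_i)^n s_k (s_is_j)^n (\alpha_i), \]
as asserted. The three remaining cases are identical in form: for $\beta_j(\tilde{\mathfrak w}kj)$ I use $\beta_j(\tilde{\mathfrak w}) = -s_{\tilde{\mathfrak w}}s_i(\alpha_j)$; and for the two arrows out of $\tilde{\mathfrak w}jk$ I use $\beta_i(\tilde{\mathfrak w}jk) = \beta_i(\tilde{\mathfrak w})$, $\beta_j(\tilde{\mathfrak w}jk) = \beta_j(\tilde{\mathfrak w}j) = s_{\tilde{\mathfrak w}}(\alpha_j)$ from \eqref{eqn-fi}, and $\beta_k(\tilde{\mathfrak w}jk) = s_{\tilde{\mathfrak w}}(s_js_i)^{n+1}(\alpha_k)$ from Lemma~\ref{pqa}. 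In each case an elementary identity such as $s_is_j(\alpha_j) = -s_i(\alpha_j)$ absorbs the stray factor and produces exactly the stated expression.

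The step I expect to be the main obstacle is justifying the bilinear-form identities that convert each mutation formula into a reflection, namely
\[ (\beta_i(\tilde{\mathfrak w}k), \beta_k(\tilde{\mathfrak w}k)) = c_{ik}(\tilde{\mathfrak w}k), \qquad (\beta_j(\tilde{\mathfrak w}k), \beta_k(\tilde{\mathfrak w}k)) = c_{jk}(\tilde{\mathfrak w}k), \]
together with their counterparts at $\tilde{\mathfrak w}jk$. These are the cyclic-seed analogues of Corollary~\ref{cor-var}, and I would deduce them from that corollary rather than recompute from scratch. Since mutation at $k$ leaves $\beta_i, \beta_j$ fixed and sends $\beta_k(\tilde{\mathfrak w}) \mapsto -\beta_k(\tilde{\mathfrak w}) + c_{jk}(\tilde{\mathfrak w})\beta_j(\tilde{\mathfrak w})$, the $W$-invariance of $(\cdot,\cdot)$ gives
\[ (\beta_i(\tilde{\mathfrak w}), \beta_k(\tilde{\mathfrak w}k)) = -(\beta_i(\tilde{\mathfrak w}), \beta_k(\tilde{\mathfrak w})) + c_{jk}(\tilde{\mathfrak w})\,(\beta_i(\tilde{\mathfrak w}), \beta_j(\tilde{\mathfrak w})), \]
and inserting $(\beta_i(\tilde{\mathfrak w}), \beta_k(\tilde{\mathfrak w})) = -c_{ik}(\tilde{\mathfrak w}) + c_{ij}(\tilde{\mathfrak w})c_{jk}(\tilde{\mathfrak w})$ and $(\beta_i(\tilde{\mathfrak w}), \beta_j(\tilde{\mathfrak w})) = c_{ij}(\tilde{\mathfrak w})$ from Corollary~\ref{cor-var} collapses the right-hand side to $c_{ik}(\tilde{\mathfrak w})$. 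As the entries $c_{ik}$ and $c_{jk}$ are unchanged under mutation at $k$ (only the signs of $b_{ik}, b_{jk}$ flip), this equals $c_{ik}(\tilde{\mathfrak w}k)$, as needed; the identity with $j$ and those at $\tilde{\mathfrak w}jk$ follow by the same bookkeeping, starting from the $\tilde{\mathfrak w}j$ row of Corollary~\ref{cor-var}. Once these four identities are in hand, the corollary is immediate from the two steps above.
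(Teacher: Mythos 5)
Your proposal is correct and is essentially the paper's intended argument: the corollary is stated without a written proof precisely because it follows from the diagram of Lemma \ref{lem-wwaa} by the one-term mutation formula plus reflection mechanism you describe, which is the same template used in Lemmas \ref{lem-ini}, \ref{ppa}, \ref{pqa} and spelled out explicitly in the proof of Corollary \ref{cor-abc}, where the analogous bilinear-form identities \eqref{eqn-kp} and \eqref{eqn-kp-1} are likewise deduced from Corollary \ref{cor-var}. Your derivation of $c_{ik}(\tilde{\mathfrak w}k)=(\beta_i(\tilde{\mathfrak w}k),\beta_k(\tilde{\mathfrak w}k))$ and its companions matches the paper's bookkeeping (note only that the step you attribute to $W$-invariance is really just bilinearity applied to the mutation formula $\beta_k(\tilde{\mathfrak w}k)=-\beta_k(\tilde{\mathfrak w})+c_{jk}(\tilde{\mathfrak w})\beta_j(\tilde{\mathfrak w})$, and that the identity $\beta_i(\tilde{\mathfrak w})=-s_{\tilde{\mathfrak w}}(\alpha_i)$ needs \eqref{eqn-main-2} rather than \eqref{eqn-se} when $n=0$).
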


Let $\tilde{\mathfrak w}= \mathfrak w i(ji)^n$ for $n \in \mathbb Z_{\ge 0}$ and $\mathfrak v = i(ji)^n$. By \eqref{eqn-ijiki}--\eqref{eqn-ijiki-3}, we have \begin{align*} \phi ( \mathfrak v ki) &= s_i (s_js_i)^{2n} s_k (s_is_j)^n \alpha_i , & \phi ( \mathfrak v kj) &= s_i (s_js_i)^{2n} s_k (s_is_j)^n s_i \alpha_j , \\ \phi ( \mathfrak v jki) &= s_i (s_js_i)^{2n+1} s_k (s_is_j)^{n+1} \alpha_i , & \phi ( \mathfrak v jkj) &= s_i (s_js_i)^{2n+1} s_k (s_is_j)^{n} s_i \alpha_j,\end{align*} 
and obtain
\begin{align*} \beta_i (\tilde{\mathfrak w}k i)  & = s_{\mathfrak w} \phi ({\mathfrak v k} i ) = \psi(\mathfrak w \mathfrak v ki) = \psi(\tilde{\mathfrak w}  ki) , & \beta_j (\tilde{\mathfrak w}k j)  & = s_{\mathfrak w} \phi ({\mathfrak v} k j) =\psi(\tilde{\mathfrak w}  kj), \\ \beta_i (\tilde{\mathfrak w}jk i)  & = s_{\mathfrak w} \phi (\mathfrak v jki )= \psi(\tilde{\mathfrak w} j ki), &  \beta_j (\tilde{\mathfrak w}jkj ) &=  s_{\mathfrak w} \phi ({\mathfrak v} j k j) =\psi(\tilde{\mathfrak w}  jkj) . \end{align*}
Thus we have proven \eqref{eqn-mmm} in this case.

\subsection{The case of $\ell(\mathfrak w)=\delta(\mathfrak w)+3$}

Write $\tilde{\mathfrak u} = \tilde{\mathfrak w}$ and $\tilde{\mathfrak v} =\mathfrak v$, 
or $\tilde{\mathfrak u} =\tilde{\mathfrak w}j$ and $\tilde{\mathfrak v} = \mathfrak v j$, so that $\tilde{\mathfrak u} =\mathfrak w \tilde{\mathfrak v}$.

\begin{Lem} \label{lem-wwab}
We have
\[ \xymatrix{ & & \tilde{\mathfrak u} k ij & \\    \ar[r]^{i} & \tilde{\mathfrak u} ki \ar[ru]^j_{[i]} \ar[rd]^k_{[i]} &  & \text{and } \quad  \\  & & \tilde{\mathfrak u} k ik  & }   
 \xymatrix{ & & \tilde{\mathfrak u} kj i \\   \ar[r]^{j} & \tilde{\mathfrak u}kj \ar[ru]^i_{[j]} \ar[rd]^k_{[j]} &  \\   & & \tilde{\mathfrak u} kj k  }  \]
\end{Lem}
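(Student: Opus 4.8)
The plan is to push the analysis of Lemma~\ref{lem-wwaa} one mutation further, keeping the same bookkeeping. Both source words of the two diagrams, $\tilde{\mathfrak u}ki$ and $\tilde{\mathfrak u}kj$, are obtained from $\tilde{\mathfrak u}k$ by a single mutation ($\mu_i$ and $\mu_j$), and by Lemma~\ref{lem-wwaa} the $i$- and $j$-arrows out of $\tilde{\mathfrak u}k$ both carry the label $[k]$. Hence the two roots created by these mutations are
\[ \beta_i(\tilde{\mathfrak u}ki)=-\beta_i(\tilde{\mathfrak u}k)+c_{ik}(\tilde{\mathfrak u}k)\,\beta_k(\tilde{\mathfrak u}k),\qquad \beta_j(\tilde{\mathfrak u}kj)=-\beta_j(\tilde{\mathfrak u}k)+c_{jk}(\tilde{\mathfrak u}k)\,\beta_k(\tilde{\mathfrak u}k), \]
while the roots fixed by each mutation are unchanged, e.g.\ $\beta_j(\tilde{\mathfrak u}ki)=\beta_j(\tilde{\mathfrak u}k)$ and $\beta_k(\tilde{\mathfrak u}ki)=\beta_k(\tilde{\mathfrak u}k)$. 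All four bracket labels to be proven are then the explicit scalar inequalities
\[ c_{ij}(\tilde{\mathfrak u}ki)\,\beta_i(\tilde{\mathfrak u}ki)\ge c_{jk}(\tilde{\mathfrak u}ki)\,\beta_k(\tilde{\mathfrak u}ki),\qquad c_{ik}(\tilde{\mathfrak u}ki)\,\beta_i(\tilde{\mathfrak u}ki)\ge c_{jk}(\tilde{\mathfrak u}ki)\,\beta_j(\tilde{\mathfrak u}ki), \]
together with the two symmetric statements at $\tilde{\mathfrak u}kj$ obtained by interchanging the roles of $i$ and $j$.

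To evaluate the coefficients appearing here I would \emph{not} track the whole exchange matrix through $\mu_i$; instead I would read each needed $c_{ab}$ off as a bilinear pairing of explicit roots, exactly as in the identities $c_{jk}(\mathfrak w i)=(\beta_k(\mathfrak w i),\beta_j(\mathfrak w i))$ and those collected in Corollary~\ref{cor-var}. The roots themselves are the reflection expressions of Corollary~\ref{cor-im} (and the two displayed formulas above), so every quantity in the four inequalities is known in closed form at $\tilde{\mathfrak u}k$.

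With those substitutions, each inequality is proven by the same cancellation device used for \eqref{eqn-iio}--\eqref{eqn-iioo}: expand both sides using the mutation formulas, observe that the terms common to the two sides cancel, and reduce what remains to the two inequalities already established at $\tilde{\mathfrak u}k$ in Lemma~\ref{lem-wwaa}, namely $c_{ik}(\tilde{\mathfrak u}k)\beta_k(\tilde{\mathfrak u}k)\ge c_{ij}(\tilde{\mathfrak u}k)\beta_j(\tilde{\mathfrak u}k)$ and $c_{jk}(\tilde{\mathfrak u}k)\beta_k(\tilde{\mathfrak u}k)\ge c_{ij}(\tilde{\mathfrak u}k)\beta_i(\tilde{\mathfrak u}k)$, supplemented where necessary by Lemmas~\ref{ppa} and \ref{pqa}. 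The surviving cross-terms are absorbed using the standing bounds $c_{ab}\ge 2$ and the positivity of the roots $s_{\tilde{\mathfrak u}}(\cdots)$, precisely as in \eqref{eqn-uau}. Confirming along the way that $c_{ab}(\tilde{\mathfrak u}ki)=\pm(\beta_a(\tilde{\mathfrak u}ki),\beta_b(\tilde{\mathfrak u}ki))$ for the relevant pairs both certifies which of the two competing terms dominates (hence the correctness of each label) and lets the new root after each further mutation be written as a reflection $-r_{\beta}(\beta')$, in preparation for the corollary computing $\beta_j(\tilde{\mathfrak u}kij)$, $\beta_k(\tilde{\mathfrak u}kik)$, and the two analogues.

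The step I expect to be the main obstacle is the coefficient $c_{jk}$ (resp.\ $c_{ik}$) that genuinely changes under $\mu_i$ (resp.\ $\mu_j$): unlike the coefficients incident to the mutation vertex, it is not merely sign-flipped and can be large, so the cancellation no longer leaves a single prior inequality but a combination of terms whose nonnegativity must be checked by hand. The identification of this coefficient as a bilinear pairing of the explicit roots is what keeps the computation tractable, and the inputs $c_{ab}\ge 2$ and $s_{\tilde{\mathfrak u}'}(\alpha_k)\ge 0$ are exactly what guarantee the leftover cross-terms are nonnegative, just as in Cases~3--5.
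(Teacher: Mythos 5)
Your proposal is correct and follows essentially the same route as the paper: use the $[k]$-labels from Lemma~\ref{lem-wwaa} to write the exchange formulas for $\beta_i(\tilde{\mathfrak u}ki)$ and $\beta_j(\tilde{\mathfrak u}kj)$, expand each claimed inequality, cancel the common large term, and reduce to the inequalities already established in Lemmas~\ref{lem-wwaa}, \ref{ppa} and \ref{pqa} (the paper handles the residual terms for the second inequality of each diagram by a case split $\tilde{\mathfrak u}=\tilde{\mathfrak w}$ versus $\tilde{\mathfrak u}=\tilde{\mathfrak w}j$, which is exactly the "checked by hand" step you anticipate; no bounds $c_{ab}\ge 2$ or positivity arguments as in \eqref{eqn-uau} turn out to be needed). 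The only cosmetic difference is bookkeeping: the paper tracks the mutated coefficients $c_{ab}$ directly by the matrix mutation rule and defers the pairing identities $c_{ab}=\pm(\beta_a,\beta_b)$ to Corollary~\ref{cor-abc}, whereas you would invoke them here --- verifying them at $\tilde{\mathfrak u}ki$ and $\tilde{\mathfrak u}kj$ still forces the same mutation computation on the relevant entries, so nothing is saved, but nothing breaks either.
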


\begin{proof}
First we prove $c_{ij}(\tilde{\mathfrak u}ki) \beta_i (\tilde{\mathfrak u}ki) \ge c_{jk}(\tilde{\mathfrak u}ki) \beta_k (\tilde{\mathfrak u}ki)$. We have
\begin{align*}
c_{ij}(\tilde{\mathfrak u}ki) \beta_i (\tilde{\mathfrak u}ki) &= -c_{ij} (\tilde{\mathfrak u}k) \beta_i (\tilde{\mathfrak u}k) + c_{ij}(\tilde{\mathfrak u}k) c_{ik} (\tilde{\mathfrak u}k) \beta_k (\tilde{\mathfrak u}k), \\
c_{jk}(\tilde{\mathfrak u}ki) \beta_k (\tilde{\mathfrak u}ki) &= - c_{jk} (\tilde{\mathfrak u}k) \beta_k (\tilde{\mathfrak u}k) + c_{ik} (\tilde{\mathfrak u}k) c_{ij}(\tilde{\mathfrak u}k) \beta_k (\tilde{\mathfrak u}k).
\end{align*}
Since $c_{jk} (\tilde{\mathfrak u}k) \beta_k (\tilde{\mathfrak u}k) \ge c_{ij} (\tilde{\mathfrak u}k) \beta_i (\tilde{\mathfrak u}k)$ by Lemma \ref{lem-wwaa}, we are done.

Now we prove $c_{ik}(\tilde{\mathfrak u}ki) \beta_i (\tilde{\mathfrak u}ki) \ge c_{jk}(\tilde{\mathfrak u}ki) \beta_j (\tilde{\mathfrak u}ki)$. The left-hand side is
\[ \text{LHS} = -c_{ik} (\tilde{\mathfrak u}k) \beta_i (\tilde{\mathfrak u}k) + c_{ik}(\tilde{\mathfrak u}k)^2  \beta_k (\tilde{\mathfrak u}k) =  -c_{ik} (\tilde{\mathfrak u}) \beta_i (\tilde{\mathfrak u}) + c_{ik}(\tilde{\mathfrak u}k)^2  \beta_k (\tilde{\mathfrak u}k) , \]
and the right-hand side is
\[ \text{RHS} = -c_{jk} (\tilde{\mathfrak u}k) \beta_j (\tilde{\mathfrak u}k) + c_{ik}(\tilde{\mathfrak u}k) c_{ij} (\tilde{\mathfrak u}k) \beta_j (\tilde{\mathfrak u}k) = -c_{jk} (\tilde{\mathfrak u}) \beta_j (\tilde{\mathfrak u}) + c_{ik}(\tilde{\mathfrak u}k) c_{ij} (\tilde{\mathfrak u}k) \beta_j (\tilde{\mathfrak u}k) .\]

We have $c_{ik}(\tilde{\mathfrak u} k) \beta_k (\tilde{\mathfrak u} k) \ge c_{ij}(\tilde{\mathfrak u} k) \beta_j (\tilde{\mathfrak u} k)$ by Lemma \ref{lem-wwaa}. If $\tilde {\mathfrak u} = \tilde{\mathfrak w}$, then we have $c_{jk}(\tilde{\mathfrak u}) \beta_j (\tilde{\mathfrak u}) \ge c_{ik} (\tilde{\mathfrak u}) \beta_i (\tilde{\mathfrak u})$ by Lemma \ref{pqa} and we have LHS $\ge$ RHS. If $\tilde{\mathfrak u} =\tilde{\mathfrak w} j$, then we compute further and obtain
\begin{align*} \text{LHS} &= -c_{ik} (\tilde{\mathfrak u}) \beta_i (\tilde{\mathfrak u}) + c_{ik}(\tilde{\mathfrak u})^2  ( - \beta_k (\tilde{\mathfrak u}) +c_{ik}(\tilde{\mathfrak u}) \beta_i (\tilde{\mathfrak u}) ) \\ & = c_{ik}(\tilde{\mathfrak u}) ( c_{ik}(\tilde{\mathfrak u})^2 -1) \beta_i (\tilde{\mathfrak u}) - c_{ik}(\tilde{\mathfrak u})^2 \beta_k (\tilde{\mathfrak u}), \\
\text{RHS} &= -c_{jk} (\tilde{\mathfrak u}) \beta_j (\tilde{\mathfrak u}) + c_{ik}(\tilde{\mathfrak u})(- c_{ij} (\tilde{\mathfrak u}) + c_{ik} (\tilde{\mathfrak u}) c_{jk} (\tilde{\mathfrak u}) ) \beta_j (\tilde{\mathfrak u}) \\ & = c_{jk} (\tilde{\mathfrak u}) ( c_{ik}(\tilde{\mathfrak u})^2 -1) \beta_j (\tilde{\mathfrak u}) - c_{ik}(\tilde{\mathfrak u}) c_{ij}(\tilde{\mathfrak u}) \beta_j (\tilde{\mathfrak u}).
\end{align*}
Since we have \[ c_{ij} (\tilde{\mathfrak u}) \beta_j(\tilde{\mathfrak u}) \ge c_{ik} (\tilde{\mathfrak u}) \beta_k(\tilde{\mathfrak u}) \quad \text{ and } \quad c_{ik}(\tilde{\mathfrak u}) \beta_i (\tilde{\mathfrak u}) \ge c_{jk} (\tilde{\mathfrak u}) \beta_j (\tilde{\mathfrak u}) \]
by Lemma \ref{ppa} and \ref{pqa}, we see that LHS $\ge$ RHS.

The inequalities for the second diagram can be proven similarly.
\end{proof}

We need another lemma to complete our proof for this case.
Consider two vectors (or line segments) $\vec{v_1}$ and $\vec{v_2}$, and define $\vec{v_1} * \vec{v_2}$ to be the piecewise linear curve resulting from moving $\vec{v_2}$ to a parallel position to concatenate $\vec{v_1}$ and $\vec{v_2}$ so that the end point of $\vec{v_1}$ and the starting point of $\vec{v_2}$ coincide. Assume that $\vec{v_1} * \vec{v_2}$ starts at $(0,0)$ and ends at a lattice point.
We define 
\[ \upsilon (\vec{v_1} * \vec{v_2}) := p_1 \cdots p_\ell \in \mathfrak W \] which records the consecutive intersections of $\vec{v_1} * \vec{v_2}$ with the sets $\mathcal T_{p_t}$, $t=1, \dots , \ell$ except the starting point and the ending point. This definition is compatible with \eqref{eqn-ups} if we let $\vec{v_2}= \vec{0}$.

In the rest of the paper, we will simply write $\upsilon (\vec{v})$ for $s_{\upsilon (\vec{v})} \in W$ to ease the notation.

\begin{Lem} \label{lem-cru}
Let $\mathfrak vpq \in \mathfrak{W}$. Then we have
\[ \upsilon (\vec{v_p}(\mathfrak vp)) \beta(\vec{v_q}(\mathfrak vp)) = - \beta(\vec{v_q}(\mathfrak v pq)) .\]
 
\end{Lem}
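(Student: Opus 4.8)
My plan is to deduce the stated identity from the reflection (braid-type) relation
\[ s_{\upsilon(\vec{v_q}(\mathfrak v pq))} \;=\; s_{\upsilon(\vec{v_p}(\mathfrak v p))}\; s_{\upsilon(\vec{v_q}(\mathfrak v p))}\; s_{\upsilon(\vec{v_p}(\mathfrak v p))}, \]
which, together with the elementary fact $r_\gamma r_\delta r_\gamma = r_{r_\gamma(\delta)}$ and the identification $s_{\upsilon(\vec v)} = r_{\beta(\vec v)}$, immediately gives $\upsilon(\vec{v_p}(\mathfrak v p))\,\beta(\vec{v_q}(\mathfrak v p)) = \pm\,\beta(\vec{v_q}(\mathfrak v pq))$; the sign is then pinned down separately. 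First I would extract the Farey relations from \eqref{V_ind}. Because $p$ is the last letter of $\mathfrak v p$, the $p$-th entry of $V(\mathfrak v p)$ is the one just created, so $\vec{v_p}(\mathfrak v p) = \vec{v_q}(\mathfrak v p) + \vec{v_r}(\mathfrak v p)$ with $\{p,q,r\}=\{1,2,3\}$; mutating once more at $q$ then gives $\vec{v_q}(\mathfrak v pq) = \vec{v_p}(\mathfrak v p) + \vec{v_r}(\mathfrak v p) = 2\vec{v_p}(\mathfrak v p) - \vec{v_q}(\mathfrak v p)$. Thus $\vec{v_q}(\mathfrak v p)$, $\vec{v_p}(\mathfrak v p)$, $\vec{v_q}(\mathfrak v pq)$ are three consecutive Farey neighbours and $\{\vec{v_p}(\mathfrak v p), \vec{v_r}(\mathfrak v p)\}$ is a $\mathbb Z$-basis.

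The core is a concatenation formula for $\upsilon$, which is exactly why the $*$-operation was introduced. The straight segment of $\vec{v_q}(\mathfrak v pq) = \vec{v_p}(\mathfrak v p)+\vec{v_r}(\mathfrak v p)$ is isotopic to the bent path $\vec{v_p}(\mathfrak v p)*\vec{v_r}(\mathfrak v p)$, so $\upsilon(\vec{v_q}(\mathfrak v pq)) = \upsilon(\vec{v_p}(\mathfrak v p))\cdot c\cdot \upsilon(\vec{v_r}(\mathfrak v p))$ in $\mathfrak W$, where $c$ is the ``corner word'' coming from rounding the intermediate lattice point $\vec{v_p}(\mathfrak v p)$ off $\mathbb Z^2$. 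I would develop the second leg through the holonomy of the first: applying the reflection $s_{\upsilon(\vec{v_p}(\mathfrak v p))} = r_{\beta(\vec{v_p}(\mathfrak v p))}$ straightens the first leg and carries the leg $\vec{v_r}(\mathfrak v p)$ onto the complementary Farey neighbour, which by the Farey relation is precisely the segment of $\vec{v_q}(\mathfrak v p) = \vec{v_p}(\mathfrak v p) - \vec{v_r}(\mathfrak v p)$. Comparing the crossing words of the two bent paths $\vec{v_p}(\mathfrak v p)*(\pm\vec{v_r}(\mathfrak v p))$ and checking that their corner words match after this reflection yields the braid relation displayed above.

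For the backbone I would run an induction on $\ell(\mathfrak v p)$ (equivalently on the Farey depth, or on $\max(|b|,|c|)$ for $\vec{v_q}(\mathfrak v pq)=\langle b,c\rangle$), taking the three triples $V(1),V(2),V(3)$ as base cases, where both the braid relation and the sign are verified by hand. The inductive step propagates the braid relation through a single mediant step using \eqref{V_ind} and the mutation behaviour of $\upsilon$ and $\beta$ established in the previous lemmas. To fix the sign $\varepsilon=-1$, I would argue by positivity: the reflection $r_{\beta(\vec{v_p}(\mathfrak v p))}$ sends the root attached to $\vec{v_q}(\mathfrak v p)$ across the wall $\{\,\cdot\,,\beta(\vec{v_p}(\mathfrak v p))\,\}^{\perp}$, reversing the $\vec{v_r}(\mathfrak v p)$-orientation and hence the positivity of the associated segment; this orientation reversal is exactly the source of the explicit minus sign, and it is preserved under the induction.

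The step I expect to be the main obstacle is the corner analysis inside the concatenation formula: controlling precisely which walls $\mathcal T_i$, and how many times, the perturbed bent path $\vec{v_p}(\mathfrak v p)*(\pm\vec{v_r}(\mathfrak v p))$ meets near the intermediate lattice point, and verifying that these corner words on the two sides are exchanged by $r_{\beta(\vec{v_p}(\mathfrak v p))}$, so that the clean braid relation emerges with no spurious simple factors. Equivalently, in the inductive formulation, the delicate point is to show that one Farey mediant step alters $\upsilon$ and $\beta$ by exactly the predicted conjugation while keeping track of the sign.
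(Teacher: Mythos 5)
Your reduction of the lemma to a braid-type relation plus a sign, and your mediant relations $\vec{v_p}(\mathfrak vp)=\vec{v_q}(\mathfrak vp)+\vec{v_r}(\mathfrak vp)$ and $\vec{v_q}(\mathfrak vpq)=\vec{v_p}(\mathfrak vp)+\vec{v_r}(\mathfrak vp)$, are correct, but the proposal has genuine gaps at exactly the points that carry the content. First, the ``holonomy'' step is not meaningful as written: $s_{\upsilon(\vec{v_p}(\mathfrak vp))}=r_{\beta(\vec{v_p}(\mathfrak vp))}$ is an element of $W$ acting on the root lattice; it is not an isometry of $\mathbb R^2$, so it cannot ``straighten the first leg'' or ``carry $\vec{v_r}(\mathfrak vp)$ onto $\vec{v_q}(\mathfrak vp)$''. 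Establishing a dictionary under which the $W$-action has such a geometric meaning is essentially equivalent to the lemma itself, so this step is circular. Second, you bend the path at the \emph{lattice point} $\vec{v_p}(\mathfrak vp)$, where six wall-rays of $\mathcal T$ meet; the resulting corner word $c$ is nonempty and depends on the directions of the two legs, and you acknowledge that you cannot control it --- but that control is the whole difficulty. Third, the sign: even granting the braid relation, one must decide whether $r_{\beta(\vec{v_p}(\mathfrak vp))}\beta(\vec{v_q}(\mathfrak vp))$ is a positive or a negative root; for a non-simple real root $\alpha$ the reflection $r_\alpha$ sends $\ell(r_\alpha)$ positive roots to negative ones, so ``orientation reversal of the segment'' decides nothing in the absence of a proven link between segment orientations and root positivity, which you do not have.

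The paper's proof removes all three difficulties with one choice: it bends at a \emph{half-lattice} point instead of a lattice point. Your own relations give $\vec{v_p}(\mathfrak vp)=\tfrac12\vec{v_q}(\mathfrak vpq)+\tfrac12\vec{v_q}(\mathfrak vp)$, so the straight segment $\vec{v_p}(\mathfrak vp)$ and the bent path $\tfrac12\vec{v_q}(\mathfrak vpq)*\tfrac12\vec{v_q}(\mathfrak vp)$ bound a triangle of area $\tfrac14$, which therefore contains no lattice points; hence $\upsilon(\vec{v_p}(\mathfrak vp))=\upsilon\bigl(\tfrac12\vec{v_q}(\mathfrak vpq)*\tfrac12\vec{v_q}(\mathfrak vp)\bigr)$. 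The corner of this bent path is the midpoint of a primitive segment, which lies on $\mathcal T_q$ (the crossing word of such a segment is a palindrome with central letter $q$), so the corner contributes the single letter $q$ and there is no corner word to analyze. Writing $\beta(\vec{v_q}(\mathfrak vpq))=s_{i_1}\cdots s_{i_{k-1}}\alpha_q$ and $\beta(\vec{v_q}(\mathfrak vp))=s_{j_1}\cdots s_{j_{\ell-1}}\alpha_q$, one gets
\[ \upsilon(\vec{v_p}(\mathfrak vp)) = s_{i_1}\cdots s_{i_{k-1}}\, s_q\, s_{j_{\ell-1}}\cdots s_{j_1}, \]
and applying this to $\beta(\vec{v_q}(\mathfrak vp))=s_{j_1}\cdots s_{j_{\ell-1}}\alpha_q$ telescopes to $s_{i_1}\cdots s_{i_{k-1}}s_q\alpha_q=-\beta(\vec{v_q}(\mathfrak vpq))$: the braid relation and the minus sign come out simultaneously, with no induction, no corner analysis, and no separate sign argument. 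If you want to salvage your outline, replace the corner at $\vec{v_p}(\mathfrak vp)$ by this midpoint corner; as it stands, the proposal is a plan whose decisive steps are missing.
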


\begin{proof}
Let $\{ p,q,r \} = \{1,2,3 \}$.
Clearly, we have \[ \vec{v_p}(\mathfrak vp)= \tfrac 1 2 \vec{v_p}(\mathfrak vp) + \tfrac 1 2 \vec{v_r}(\mathfrak vp) + \tfrac 1 2 \vec{v_q}(\mathfrak vp) = \tfrac 1 2 \vec{v_q}(\mathfrak vpq) + \tfrac 1 2 \vec{v_q}(\mathfrak vp) . \] The curves $\vec{v_p}(\mathfrak vp)$ and $\tfrac 1 2 \vec{v_q}(\mathfrak vpq) * \tfrac 1 2 \vec{v_q}(\mathfrak vp)$ make a triangle with area $\frac 1 4$. Thus there is no lattice point in the interior of the triangle. Consequently, we have $\upsilon (\vec{v_p}(\mathfrak vp)) = \upsilon \left ( \tfrac 1 2 \vec{v_q}(\mathfrak vpq) * \tfrac 1 2 \vec{v_q}(\mathfrak vp) \right )$.

Since the ending point of $\tfrac 1 2 \vec{v_q}(\mathfrak vpq)$ is in $\mathcal T_q$, we may write
\[ \upsilon \left ( \tfrac 1 2 \vec{v_q}(\mathfrak vpq) * \tfrac 1 2 \vec{v_q}(\mathfrak vp) \right ) = s_{i_1} \cdots s_{i_{k-1}} s_q s_{j_{\ell -1}} s_{j_{\ell -2}} \cdots s_{j_1}, \]
where we have $\beta(\vec{v_q}(\mathfrak v pq)) = s_{i_1} \cdots s_{i_{k-1}} \alpha_q$ and $\beta(\vec{v_q}(\mathfrak v p)) = s_{j_1} \cdots s_{j_{\ell-1}} \alpha_q$. 
Now we have 
\begin{align*} 
\upsilon (\vec{v_p}(\mathfrak vp)) \beta(\vec{v_q}(\mathfrak vp)) &=\upsilon \left ( \tfrac 1 2 \vec{v_q}(\mathfrak vpq) * \tfrac 1 2 \vec{v_q}(\mathfrak vp) \right ) \beta(\vec{v_q}(\mathfrak vp))\\  & = (s_{i_1} \cdots s_{i_{k-1}} s_q s_{j_{\ell -1}} s_{j_{\ell -2}} \cdots s_{j_1} )s_{j_1} \cdots s_{j_{\ell-1}} \alpha_q \\
 & = s_{i_1} \cdots s_{i_{k-1}} s_q \alpha_q = - \beta(\vec{v_q}(\mathfrak v pq)) . 
\end{align*}
\end{proof}

\begin{Cor} \label{cor-abc}
For $(p,q)=(i,j)$ or $(j,i)$, we obtain
\begin{align*}
\beta_q(\tilde{\mathfrak u} kpq) &= s_{\mathfrak w} \phi(\tilde{\mathfrak v} kpq) = \psi (\tilde{\mathfrak u} kpq)  \quad \text{ and } \quad  \beta_k(\tilde{\mathfrak u} kpk) =  s_{\mathfrak w} \phi(\tilde{\mathfrak v} kpk) =  \psi (\tilde{\mathfrak u} kpk) .
\end{align*}
\end{Cor}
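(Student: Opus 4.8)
The plan is to obtain the two displayed identities by carrying out one further mutation beyond Case~5 and then matching the outcome against $\phi$ through Lemma~\ref{lem-cru}. The rightmost equality in each identity is immediate: since $\tilde{\mathfrak u} kpq = \mathfrak w\,(\tilde{\mathfrak v} kpq)$ with $\mathfrak w$ the longest prefix for which $B(\mathfrak w)$ is acyclic, the definition of $\psi$ gives $\psi(\tilde{\mathfrak u} kpq) = s_{\mathfrak w}\phi(\tilde{\mathfrak v} kpq)$, and likewise with $kpk$. Thus the entire content lies in the first equality, $\beta_q(\tilde{\mathfrak u} kpq) = s_{\mathfrak w}\phi(\tilde{\mathfrak v} kpq)$ and $\beta_k(\tilde{\mathfrak u} kpk) = s_{\mathfrak w}\phi(\tilde{\mathfrak v} kpk)$.

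First I would convert the last mutation into a reflection. By Lemma~\ref{lem-wwab} the mutation producing $\tilde{\mathfrak u} kpq$ (resp.\ $\tilde{\mathfrak u} kpk$) out of $\tilde{\mathfrak u} kp$ is governed by the single dominant neighbor $p$, as recorded by the label $[p]$ on the relevant arrow. Verifying, exactly as in Corollary~\ref{cor-var} and the computations of Lemma~\ref{pqa}, that the off-diagonal entry equals the bilinear pairing, $c_{pq}(\tilde{\mathfrak u} kp) = (\beta_p(\tilde{\mathfrak u} kp), \beta_q(\tilde{\mathfrak u} kp))$ (and similarly with $k$ in place of $q$), the exchange relation collapses to a single reflection:
\[ \beta_q(\tilde{\mathfrak u} kpq) = -\,r_{\beta_p(\tilde{\mathfrak u} kp)}\bigl(\beta_q(\tilde{\mathfrak u} kp)\bigr), \qquad \beta_k(\tilde{\mathfrak u} kpk) = -\,r_{\beta_p(\tilde{\mathfrak u} kp)}\bigl(\beta_k(\tilde{\mathfrak u} kp)\bigr). \]

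Next I would factor out $s_{\mathfrak w}$ and appeal to Lemma~\ref{lem-cru}. Corollary~\ref{cor-im} yields $\beta_p(\tilde{\mathfrak u} kp) = s_{\mathfrak w}\phi(\tilde{\mathfrak v} kp)$ after rewriting $s_{\tilde{\mathfrak w}} = s_{\mathfrak w}s_{\mathfrak v}$, while the coordinates untouched by the $p$-mutation satisfy the equivariance $\beta_q(\tilde{\mathfrak u} kp) = s_{\mathfrak w}\beta(\vec v_q(\tilde{\mathfrak v} kp))$ and $\beta_k(\tilde{\mathfrak u} kp) = s_{\mathfrak w}\beta(\vec v_k(\tilde{\mathfrak v} kp))$, inherited from the earlier cases (these coordinates are unchanged from $\tilde{\mathfrak u} k$, where the identification has already been established). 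Since reflections are conjugation-equivariant, $r_{w\gamma} = w\,r_\gamma\,w^{-1}$, and $s_{\mathfrak w}$ preserves the form, this gives $\beta_q(\tilde{\mathfrak u} kpq) = -\,s_{\mathfrak w}\,r_{\phi(\tilde{\mathfrak v} kp)}\bigl(\beta(\vec v_q(\tilde{\mathfrak v} kp))\bigr)$, and analogously for the $k$-branch. The key remaining observation is that $\upsilon(\vec v_p(\tilde{\mathfrak v} kp))\in R$ is precisely the palindromic word representing the reflection attached to $\phi(\tilde{\mathfrak v} kp) = \beta(\vec v_p(\tilde{\mathfrak v} kp))$, so that $s_{\upsilon(\vec v_p(\tilde{\mathfrak v} kp))} = r_{\phi(\tilde{\mathfrak v} kp)}$ in $W$; applying Lemma~\ref{lem-cru} with its $\mathfrak v$ taken to be $\tilde{\mathfrak v} k$ then gives $r_{\phi(\tilde{\mathfrak v} kp)}(\beta(\vec v_q(\tilde{\mathfrak v} kp))) = -\phi(\tilde{\mathfrak v} kpq)$, whence $\beta_q(\tilde{\mathfrak u} kpq) = s_{\mathfrak w}\phi(\tilde{\mathfrak v} kpq)$. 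The $k$-branch is identical with $q$ replaced by $k$, and running this for $(p,q)=(i,j)$ and $(j,i)$ and for both branches of Lemma~\ref{lem-wwab} settles all four identities.

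I expect the main obstacle to be the step that reduces the exchange relation to a single reflection: it requires confirming $c_{pq}(\tilde{\mathfrak u} kp) = (\beta_p, \beta_q)$ together with its $k$-analogue in the \emph{cyclic} regime, and simultaneously the equivariance $\beta_q(\tilde{\mathfrak u} kp) = s_{\mathfrak w}\beta(\vec v_q(\tilde{\mathfrak v} kp))$ for the unmutated coordinate. Both are inductive computations of the same flavor as Corollary~\ref{cor-var} and Lemmas~\ref{ppa}--\ref{pqa}, but they must be pushed through carefully in each branch; once they are in place, the passage to $\phi$ via Lemma~\ref{lem-cru} and the identity $s_{\upsilon(\vec v_p)} = r_{\phi}$ is routine.
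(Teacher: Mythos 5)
Your proposal is correct and follows essentially the same route as the paper's own proof: you establish $c_{pq}(\tilde{\mathfrak u}kp)=(\beta_p(\tilde{\mathfrak u}kp),\beta_q(\tilde{\mathfrak u}kp))$ (and its $k$-analogue) so that the exchange relation collapses to $-r_{\beta_p(\tilde{\mathfrak u}kp)}$, then factor out $s_{\mathfrak w}$ by equivariance and conjugation, identify $s_{\upsilon(\vec{v_p}(\tilde{\mathfrak v}kp))}$ with the reflection of $\phi(\tilde{\mathfrak v}kp)$, and close with Lemma \ref{lem-cru} applied to $\tilde{\mathfrak v}k$ --- exactly the paper's argument. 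The only difference is that the paper explicitly carries out (for one representative case, via Corollary \ref{cor-var}) the bilinear-form computation you defer as the ``main obstacle,'' and it is indeed of the inductive flavor you describe.
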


\begin{proof}
We first show \begin{equation} \label{eqn-kp}
c_{pq}(\tilde{\mathfrak u}kp)=(\beta_p(\tilde{\mathfrak u}kp), \beta_q(\tilde{\mathfrak u}kp)). \end{equation}
As the other cases are all similar, we only consider the case $\tilde{\mathfrak u}=\tilde{\mathfrak w}$ and $p=j, q=i$. We have
\[c_{pq}(\tilde{\mathfrak u}kp)=c_{ij}(\tilde{\mathfrak w}k)=-c_{ij}(\tilde{\mathfrak w})+c_{ik}(\tilde{\mathfrak w})c_{jk}(\tilde{\mathfrak w}).\] 
On the other hand, since $(\beta_i(\tilde{\mathfrak w}), \beta_k(\tilde{\mathfrak w}))= -c_{ik}(\tilde{\mathfrak w})+c_{ij}(\tilde{\mathfrak w})c_{jk}(\tilde{\mathfrak w})$ by Corollary \ref{cor-var}, we get
\begin{align*}
(\beta_p(\tilde{\mathfrak u}kp), \beta_q(\tilde{\mathfrak u}kp)) &= ( -\beta_j(\tilde{\mathfrak w}k)+c_{jk}(\tilde{\mathfrak w}k)\beta_k(\tilde{\mathfrak w}k) ,\beta_i(\tilde{\mathfrak w})) \\
&=(-\beta_j(\tilde{\mathfrak w})-c_{jk}(\tilde{\mathfrak w})\beta_k(\tilde{\mathfrak w})+c_{jk}(\tilde{\mathfrak w})^2 \beta_j(\tilde{\mathfrak w}), \beta_i(\tilde{\mathfrak w})) \\
&= -c_{ij}(\tilde{\mathfrak w})+c_{jk}(\tilde{\mathfrak w})c_{ik}(\tilde{\mathfrak w})-c_{jk}(\tilde{\mathfrak w})^2 c_{ij}(\tilde{\mathfrak w}) +c_{jk}(\tilde{\mathfrak w})^2 c_{ij}(\tilde{\mathfrak w}) \\&=-c_{ij}(\tilde{\mathfrak w})+c_{jk}(\tilde{\mathfrak w})c_{ik}(\tilde{\mathfrak w}).
\end{align*}
Thus we have proven \eqref{eqn-kp} in this case.

Since we have
\[ \beta_p(\tilde{\mathfrak u} kp)  = s_{\mathfrak w} \beta(\vec{v_p}(\tilde{\mathfrak v}kp)) \quad \text{ and } \quad \beta_q(\tilde{\mathfrak u} kp) = s_{\mathfrak w} \beta(\vec{v_q}(\tilde{\mathfrak v}kp))  ,\]
 we obtain from \eqref{eqn-kp}
\begin{align*} \beta_q(\tilde{\mathfrak u} kpq) &= -\beta_q(\tilde{\mathfrak u}kp) + c_{pq}(\tilde{\mathfrak u}kp) \beta_p(\tilde{\mathfrak u}kp)= - r_{\beta_p(\tilde{\mathfrak u}kp)} \beta_q(\tilde{\mathfrak u}kp) \\ &= - s_{\mathfrak w} \upsilon(\vec{v_p}(\tilde{\mathfrak v}kp)) s_{\mathfrak w}^{-1} s_{\mathfrak w} 
\beta(\vec{v_q}(\tilde{\mathfrak v}kp)) = - s_{\mathfrak w} \upsilon(\vec{v_p}(\tilde{\mathfrak v}kp))  \beta(\vec{v_q}(\tilde{\mathfrak v}kp))
.
\end{align*}
Now it follows from Lemma \ref{lem-cru} that 
\[ \beta_q(\tilde{\mathfrak u}kpq) =  - s_{\mathfrak w} \upsilon(\vec{v_p}(\tilde{\mathfrak v}kp)) \beta(\vec{v_q}(\tilde{\mathfrak v}kp))= 
s_{\mathfrak w} \beta(\vec{v_q}(\tilde{\mathfrak v}kpq))
=s_{\mathfrak w} \phi(\tilde{\mathfrak v}kpq) = \psi (\tilde{\mathfrak u} kpq)  .\]

Similarly, we have 
\begin{equation} \label{eqn-kp-1} c_{pk}(\tilde{\mathfrak u}kp)=(\beta_p(\tilde{\mathfrak u}kp), \beta_k(\tilde{\mathfrak u}kp))  \end{equation}
and compute
\begin{align*}
\beta_k(\tilde{\mathfrak u} kpk) & = - r_{\beta_p(\tilde{\mathfrak u}kp)} \beta_k(\tilde{\mathfrak u}kp) \\ &= - s_{\mathfrak w} \upsilon(\vec{v_p}(\tilde{\mathfrak v}kp)) s_{\mathfrak w}^{-1} s_{\mathfrak w} 
\beta(\vec{v_k}(\tilde{\mathfrak v}kp)) = - s_{\mathfrak w} \upsilon(\vec{v_p}(\tilde{\mathfrak v}kp)) \beta(\vec{v_k}(\tilde{\mathfrak v}kp)) \\ &= 
s_{\mathfrak w} \beta(\vec{v_k}(\tilde{\mathfrak v}kpk)) =   s_{\mathfrak w} \phi(\tilde{\mathfrak v} kpk) =\psi (\tilde{\mathfrak u} kpk).
\end{align*} 
\end{proof}

\subsection{The case of $\ell(\mathfrak w)\geq \delta(\mathfrak w)+4$}

Consider $\hat {\mathfrak w} \in \mathfrak W$ and write
\[ \hat{\mathfrak w} = \tilde{\mathfrak u} k \mathfrak u_1 k \mathfrak u_2 \dots k \mathfrak u_\ell , \]
where we let
\[ \mathfrak u_t = (ij)^{n_t}, (ji)^{n_t}, (ij)^{n_t}i \text{ or } (ji)^{n_t}j \text{ for some } {n_t} \ge 0  \] 
for $t=1,2, \dots, \ell$.

\begin{Lem} \label{lem-fin}
We have
\[ \xymatrix{ &  & \hat{\mathfrak w} j & \\   \ar[r]^{i}_{\phantom{[i]}} & \hat{\mathfrak w} \ar[ru]^j_{[i]} \ar[rd]^k_{[i]} &  & \text{, \quad } \\  &  & \hat{\mathfrak w} k & } \xymatrix{ &  & \hat{\mathfrak w} i & \\   \ar[r]^{j}_{\phantom{[i]}} & \hat{\mathfrak w} \ar[ru]^i_{[j]} \ar[rd]^k_{[j]} &  & \text{\hskip -1 cm  \quad or \quad } \\  &  & \hat{\mathfrak w} k & } \xymatrix{ &  & \hat{\mathfrak w} i \\   \ar[r]^{k}_{\phantom{[i]}} & \hat{\mathfrak w} \ar[ru]^i_{[k]} \ar[rd]^j_{[k]} &  \\  &  & \hat{\mathfrak w}  j} \]
\end{Lem}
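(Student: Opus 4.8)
The plan is to prove all three diagrams simultaneously by induction on $\ell(\hat{\mathfrak w})$, after recording the six arrows in a single uniform statement. If $p$ denotes the last letter of $\hat{\mathfrak w}$, then each asserted bracket says exactly that, for the chosen mutation direction $q\neq p$ (with $r$ the remaining letter),
\[ c_{pq}(\hat{\mathfrak w})\,\beta_p(\hat{\mathfrak w}) \ \ge\ c_{qr}(\hat{\mathfrak w})\,\beta_r(\hat{\mathfrak w}). \]
Conceptually this is the assertion that the \emph{newest} root $\beta_p(\hat{\mathfrak w})$ dominates in every subsequent mutation, which is why the bracket is always $[p]$ regardless of which of the two remaining letters we append. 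For words with $\ell(\hat{\mathfrak w})\le\delta(\hat{\mathfrak w})+3$ this uniform inequality is precisely what Cases 1--6 establish, via Lemmas \ref{lem-ini}, \ref{ppa}, \ref{pqa}, \ref{lem-wwaa}, \ref{lem-wwab}; these furnish the base of the induction.

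For the inductive step I would write $\hat{\mathfrak w}=\hat{\mathfrak w}'p$ and let $p'\neq p$ be the last letter of $\hat{\mathfrak w}'$. Applying the inductive hypothesis to $\hat{\mathfrak w}'$ tells us which term produced $\beta_p(\hat{\mathfrak w})$: the $p$-arrow into $\hat{\mathfrak w}$ carries the bracket $[p']$, so that $\beta_p(\hat{\mathfrak w})=-\beta_p(\hat{\mathfrak w}')+c_{pp'}(\hat{\mathfrak w}')\beta_{p'}(\hat{\mathfrak w}')$, while $\beta_r(\hat{\mathfrak w})=\beta_r(\hat{\mathfrak w}')$. Substituting this, together with the mutation formulas for the coefficients $c_{pq}$ and $c_{qr}$, reduces the displayed inequality to an inequality purely in the data of $\hat{\mathfrak w}'$. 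As in the proof of Lemma \ref{ppa}, the root-vector $(\beta_p,\beta_q)$ and the coefficient-vector $(c_{qk},c_{pk})$ evolve under the \emph{same} transfer matrices $J_i,J_j$, whose determinants are $\pm1$; hence the reduced inequality has the same direction as the inductive one, and no sign is ever flipped.

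The induction naturally separates into two flavors of step according to $p'$. When $p,p'\in\{i,j\}$ we are moving within a single block $\mathfrak u_t$, and the reduction is exactly the $J^n$-computation of Lemmas \ref{ppa} and \ref{pqa}: since the dominant eigenvalue of $J$ exceeds $1$ and $y_{n+1}>y_n$, the dominance of the newest root only strengthens along the block. When $\{p,p'\}$ contains the separator $k$ we are at a junction: appending $k$ after an $\{i,j\}$-letter is the computation of Lemma \ref{lem-wwaa}, and appending the first $\{i,j\}$-letter after a $k$ is the computation of Lemma \ref{lem-wwab}. Both of these rewrite the freshly mutated coefficients through the identifications of $c_{pq}$ with bilinear forms $(\beta_p,\beta_q)$ supplied by Corollary \ref{cor-var} and equations \eqref{eqn-kp} and \eqref{eqn-kp-1}, and then invoke the dominances already known at $\hat{\mathfrak w}'$.

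I expect the junction step to be the main obstacle. Within a block the transfer matrix does all the work automatically, but at each new occurrence of $k$ the ``active pair'' of the recursion changes, and one must re-verify that the just-created root still outgrows the newly mutated coefficients; this is where the hypothesis $c_{pq}\ge 2$ is indispensable, exactly as in the positivity checks of \eqref{eqn-uu} and in the LHS/RHS comparisons inside Lemma \ref{lem-wwab}. The real point to nail down is that the proofs of Lemmas \ref{lem-wwaa} and \ref{lem-wwab}, although phrased relative to a base $\tilde{\mathfrak w}$ or $\tilde{\mathfrak u}$ sitting just after the acyclic part, in fact only ever use the dominance inequalities at the immediately preceding mutation. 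Once this \emph{locality} is made explicit, those computations serve verbatim as the inductive step for an arbitrarily deep $\hat{\mathfrak w}$, and the only remaining work is the purely organizational bookkeeping over the three diagrams, two arrows each, and the two sub-cases $p'\in\{i,j\}$ versus $p'=k$ — none of which requires a genuinely new estimate.
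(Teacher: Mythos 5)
Your overall frame --- restating all six arrows as ``the newest root always dominates,'' inducting on the length of $\hat{\mathfrak w}$, and using the earlier lemmas as the base --- matches the paper's strategy in outline, and your step for a mutation direction $q\neq p'$ (away from the previous letter) is exactly the paper's one-step proof of \eqref{eqn-ioioio}. But two of your supporting claims are false. First, Cases 1--6 do \emph{not} establish the uniform inequality: by Lemma \ref{pqa}, at $\tilde{\mathfrak w}=\mathfrak w i(ji)^n$ the $k$-arrow carries the bracket $[j]$, and at $\tilde{\mathfrak w}j$ it carries $[i]$; before the first junction the $k$-mutation is dominated by the \emph{second}-newest letter, and the uniform ``last letter'' pattern only begins with Lemmas \ref{lem-wwaa} and \ref{lem-wwab}. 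For the same reason your ``locality'' claim fails: the proof of \eqref{eqn-iioo} in Lemma \ref{lem-wwaa} uses the exchange $\beta_k(\tilde{\mathfrak w}k)=-\beta_k(\tilde{\mathfrak w})+c_{jk}(\tilde{\mathfrak w})\beta_j(\tilde{\mathfrak w})$ together with $c_{jk}(\tilde{\mathfrak w})\beta_j(\tilde{\mathfrak w})\ge c_{ik}(\tilde{\mathfrak w})\beta_i(\tilde{\mathfrak w})$, i.e.\ precisely those reversed first-block brackets; at a deep junction the $k$-exchange is governed by $c_{ik}\beta_i$ instead, so that computation cannot be cited verbatim --- it must be redone with the uniform brackets as input (it does go through, but it is a different calculation).

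The genuine gap is the step for the mutation direction equal to the previous letter ($q=p'$), i.e.\ the paper's inequality \eqref{eqn-ioio}: your ``no sign is ever flipped'' claim asserts the one-step reduction lands on the inductive inequality, and it does not. Write $\hat{\mathfrak w}=\hat{\mathfrak w}'i$ with $\hat{\mathfrak w}'$ ending in $j$, and substitute $\beta_i(\hat{\mathfrak w})=-\beta_i(\hat{\mathfrak w}')+c_{ij}\beta_j(\hat{\mathfrak w}')$, $c_{jk}(\hat{\mathfrak w})=-c_{jk}(\hat{\mathfrak w}')+c_{ij}c_{ik}(\hat{\mathfrak w}')$, $\beta_k(\hat{\mathfrak w})=\beta_k(\hat{\mathfrak w}')$ (here $c_{ij}$ is constant along the block). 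The difference of the two sides of \eqref{eqn-ioio} becomes
\[ \bigl[c_{ij}^2\beta_j(\hat{\mathfrak w}')-c_{ij}c_{ik}(\hat{\mathfrak w}')\beta_k(\hat{\mathfrak w}')\bigr]+\bigl[c_{jk}(\hat{\mathfrak w}')\beta_k(\hat{\mathfrak w}')-c_{ij}\beta_i(\hat{\mathfrak w}')\bigr], \]
whose first bracket is nonnegative by the inductive hypothesis, but whose second bracket is a ``cross'' inequality that is not among the inductive ones and is in fact \emph{reversed} two or more letters into a block: taking $\hat{\mathfrak w}'=\mathfrak z jij$ with $\mathfrak z=\mathfrak w'k$ a junction, and evaluating $\beta_j(\mathfrak zj)$, $\beta_i(\mathfrak zji)$, $c_{jk}(\mathfrak zji)$ by the (inductively valid) brackets, one finds
\[ c_{jk}(\hat{\mathfrak w}')\beta_k(\hat{\mathfrak w}')-c_{ij}\beta_i(\hat{\mathfrak w}')=\bigl(c_{ij}\beta_i(\mathfrak z)-c_{jk}(\mathfrak z)\beta_k(\mathfrak z)\bigr)+c_{ij}\bigl(c_{ij}\beta_j(\mathfrak z)-c_{ik}(\mathfrak z)\beta_k(\mathfrak z)\bigr)\le 0 \]
by the two $[k]$-brackets at $\mathfrak z$. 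The determinant trick cannot rescue this: $\det J^n=\pm1$ controls comparisons of $c_{jk}\beta_j$ with $c_{ik}\beta_i$, where both pairs evolve under the same $J$-matrices (as in Lemma \ref{pqa}), whereas in \eqref{eqn-ioio} the quantities $c_{ij}$ and $\beta_k$ are block-constants and the comparison has a different structure. This is exactly why the paper's proof of \eqref{eqn-ioio} is non-local: it expands by $J^n$ across the \emph{entire} final block $\mathfrak u_\ell$ back to the junction $\mathfrak w'k$, and only there applies the two $[k]$-brackets furnished by induction. Your proposal is missing this block-length transfer; the only way to keep the step one-step local would be to prove in addition the scalar inequality $c_{ij}c_{ik}\ge c_{jk}$ along these mutation sequences, which is a new, unproven lemma appearing nowhere in the paper.
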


\begin{proof}
Since the other cases are similar, we only consider the case $\mathfrak u_\ell =  (ij)^ni$ for $n \ge 0$.
We use induction on $\ell$ and $n$.  When $\ell =1$ and $n=0$, the assertion follows from Lemma \ref{lem-wwab}.
Assume that $\ell \ge 1$ and $n \ge 1$, and suppose that $\mathfrak u_\ell = (ij)^ni$. First we want to prove
\begin{equation} \label{eqn-ioio} c_{ij} (\hat{\mathfrak w}) \beta_i (\hat{\mathfrak w}) \ge c_{jk} (\hat{\mathfrak w}) \beta_k (\hat{\mathfrak w}) .\end{equation}
 Write  $\mathfrak w' = \tilde{\mathfrak u} k \mathfrak u_1 k \mathfrak u_2 \dots \mathfrak u_{\ell -1}$. Let $\gamma = c_{ij}(\mathfrak w' k)$. 
 Then, using the matrix $J_n$ in the proof of Lemma \ref{ppa} with new $\gamma$, we have
\begin{align*}
c_{ij} (\hat{\mathfrak w}) \beta_i (\hat{\mathfrak w}) &= \gamma ( x_n \beta_i (\mathfrak w' ki) - \gamma y_n \beta_j (\mathfrak w' ki)) \\ &= \gamma x_n ( - \beta_i (\mathfrak w' k) + c_{ik} (\mathfrak w' k) \beta_k (\mathfrak w' k)) - \gamma^2 y_n \beta_j (\mathfrak w' k) \\ &= - \gamma x_n \beta_i (\mathfrak w' k) + \gamma x_n c_{ik} (\mathfrak w' k) \beta_k (\mathfrak w' k) - \gamma^2 y_n \beta_j (\mathfrak w' k), \\
c_{jk} (\hat{\mathfrak w}) \beta_k (\hat{\mathfrak w}) &= (x_n c_{jk}(\mathfrak w' ki) - \gamma y_n c_{ik} (\mathfrak w' ki)) \beta_k (\mathfrak w' k) \\ &= x_n ( -c_{jk}(\mathfrak w' k) + \gamma c_{ik}(\mathfrak w' k) ) \beta_k(\mathfrak w' k) - \gamma y_n c_{ik}(\mathfrak w' k) \beta_k(\mathfrak w' k) \\ &=
 - x_n c_{jk} (\mathfrak w' k) \beta_k (\mathfrak w' k)+ \gamma  x_n c_{ik} (\mathfrak w' k)\beta_k (\mathfrak w' k) - \gamma y_n c_{ik}(\mathfrak w' k) \beta_k(\mathfrak w' k) .
\end{align*}
Since we have, by induction,  
\[ c_{ik}(\mathfrak w' k) \beta_k (\mathfrak w' k) \ge \gamma \beta_j (\mathfrak w' k) \text{ and } c_{jk}(\mathfrak w' k) \beta_k (\mathfrak w' k) \ge \gamma \beta_i (\mathfrak w' k), \]
the inequality \eqref{eqn-ioio} follows.

Next we prove
\begin{equation} \label{eqn-ioioio} c_{ik} (\hat{\mathfrak w}) \beta_i (\hat{\mathfrak w}) \ge c_{jk} (\hat{\mathfrak w}) \beta_j (\hat{\mathfrak w}) .\end{equation}
Write $\mathfrak w'' = \tilde{\mathfrak u} k \mathfrak u_1 k \mathfrak u_2 \dots \mathfrak u_{\ell -1}ki(ji)^{n-1}j$.
By induction, we have
\begin{align*}
c_{ik} (\hat{\mathfrak w}) \beta_i (\hat{\mathfrak w}) & = c_{ik} (\mathfrak w'') (- \beta_i (\mathfrak w'')+ c_{ij}(\mathfrak w'')  \beta_j (\mathfrak w'')) = - c_{ik}(\mathfrak w'') \beta_i (\mathfrak w'')+ c_{ik} (\mathfrak w'') c_{ij}(\mathfrak w'') \beta_j (\mathfrak w''), \\ 
c_{jk} (\hat{\mathfrak w}) \beta_j (\hat{\mathfrak w}) & = (-c_{jk}(\mathfrak w'')+c_{ij}(\mathfrak w'') c_{ik}(\mathfrak w'')) \beta_j (\mathfrak w'') =
-c_{jk}(\mathfrak w'') \beta_j (\mathfrak w'') + c_{ik} (\mathfrak w'') c_{ij}(\mathfrak w'') \beta_j (\mathfrak w'').
\end{align*}
Since $c_{jk}(\mathfrak w'') \beta_j (\mathfrak w'') \ge c_{ik} (\mathfrak w'') \beta_i (\mathfrak w'')$ by induction, we see that the inequality \eqref{eqn-ioioio} holds.

\end{proof}

We need another lemma.
\begin{Lem} \label{lem-bibi} 
Assume $\hat{\mathfrak w}$ ends with $q$. Then we have, for $p \neq q$,
\[ c_{pq}(\hat{\mathfrak w}) = (\beta_p(\hat{\mathfrak w}), \beta_q(\hat{\mathfrak w})).\]
\end{Lem}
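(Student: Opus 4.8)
The plan is to prove the identity $c_{pq}(\hat{\mathfrak w}) = (\beta_p(\hat{\mathfrak w}), \beta_q(\hat{\mathfrak w}))$ by induction on the length $\ell(\hat{\mathfrak w})$, following the same inductive skeleton already used throughout Section~\ref{sec-proof}. The base cases (short words, where $\hat{\mathfrak w} \in \mathcal C_1 \cup \mathcal C_3$ or has length close to $\rho(\hat{\mathfrak w})$) are covered by the explicit verifications in Lemmas~\ref{lem-ini}, \ref{pqa} and Corollary~\ref{cor-var}, where this kind of identity was checked in passing (e.g. the computation $c_{ij}(\mathfrak w i)=(\beta_i(\mathfrak w i),\beta_j(\mathfrak w i))$ and the full list in Corollary~\ref{cor-var}). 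So I would first isolate those as the initialization, then handle the inductive step.

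For the inductive step, write $\hat{\mathfrak w} = \hat{\mathfrak w}' q$ where $\hat{\mathfrak w}'$ ends in some letter $r \neq q$. The key observation is that mutation in direction $q$ acts on the pair $(\beta_p, \beta_q)$ by a reflection: since $\beta_q(\hat{\mathfrak w}) = -\beta_q(\hat{\mathfrak w}') + c_{rq}(\hat{\mathfrak w}')\beta_r(\hat{\mathfrak w}')$ is obtained by the exchange relation, and $\beta_p(\hat{\mathfrak w}) = \beta_p(\hat{\mathfrak w}')$ for $p \neq q$, I can compute the bilinear form after mutation in terms of the form before mutation. Concretely, I would use $r_{\beta_r(\hat{\mathfrak w}')}$ and the relation $\beta_q(\hat{\mathfrak w}) = - r_{\beta_r(\hat{\mathfrak w}')}(\beta_q(\hat{\mathfrak w}'))$ (the same device used in \eqref{eqn-main-3} and in the proof of Corollary~\ref{cor-abc}) together with the fact that the bilinear form is $W$-invariant, so that $(\beta_p(\hat{\mathfrak w}), \beta_q(\hat{\mathfrak w}))$ can be rewritten as a form between roots appearing one step earlier, to which the inductive hypothesis applies. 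The matrix-mutation rule \eqref{eq:matrix-mutation} then gives the parallel recursion for $c_{pq}(\hat{\mathfrak w})$, and I would check that the two recursions match, using $(\beta_i,\beta_i)=2$ (real roots) and the sign-tracking provided by the $[\cdot]$-diagrams of Lemmas~\ref{lem-wwaa}, \ref{lem-wwab} and \ref{lem-fin}.

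The crucial input from the diagrams is that they pin down which of the two cases of \eqref{eq:matrix-mutation} occurs: the bracket labels $[i],[j],[k]$ record precisely the sign of the relevant $b_{pk}$ (equivalently the inequality $c_{pq}\beta_q \gtrless c_{pr}\beta_r$), so that the combinatorial mutation of the $c$'s and the reflection action on the $\beta$'s stay synchronized. I would therefore organize the inductive step along the same case division as Lemma~\ref{lem-fin}, namely according to whether the last block $\mathfrak u_\ell$ ends in $(ij)^n$, $(ji)^n$, $(ij)^n i$, or $(ji)^n j$, and in each case read off from the diagram which reflection to apply.

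The main obstacle I anticipate is bookkeeping: ensuring that the reflection identity $r_{\beta_r(\hat{\mathfrak w}')} = s_{\hat{\mathfrak w}'} s_r s_{\hat{\mathfrak w}'}^{-1}$ is available, which requires knowing that the relevant words are reduced and that $\beta_r(\hat{\mathfrak w}')$ really is the positive real root attached to the reflection $s_{\hat{\mathfrak w}'} s_r s_{\hat{\mathfrak w}'}^{-1}$. This is exactly the content of the already-proven cases of Theorem~\ref{thm-mm} via the identity $\beta_p(\hat{\mathfrak w}') = \psi(\hat{\mathfrak w}')$, so the lemma and the main theorem are proved by a simultaneous induction; I would make that mutual dependence explicit and verify there is no circularity, since the inequalities of Lemmas~\ref{ppa}--\ref{lem-fin} (which feed the diagrams) were themselves established for words strictly shorter than, or built from, $\hat{\mathfrak w}$.
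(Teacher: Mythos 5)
Your proposal is correct in substance and follows the same inductive skeleton as the paper: induction on $\ell(\hat{\mathfrak w})$, base case $\hat{\mathfrak w}=\tilde{\mathfrak u}kq$ supplied by \eqref{eqn-kp} and \eqref{eqn-kp-1}, the denominator exchange recurrence whose shape is pinned down by the dominance diagrams of Lemma~\ref{lem-fin}, and a matching of that recursion against the matrix mutation rule. Where you genuinely diverge is in how the bilinear form is evaluated after one mutation. The paper writes $\hat{\mathfrak w}=\check{\mathfrak w}rq$, substitutes $\beta_q(\hat{\mathfrak w})=-\beta_q(\check{\mathfrak w}r)+c_{qr}(\check{\mathfrak w}r)\beta_r(\check{\mathfrak w}r)$, and simply expands by bilinearity; no reflection operator and, crucially, no identification $\beta_r(\check{\mathfrak w}r)=\psi(\check{\mathfrak w}r)$ is ever invoked. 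You instead route the computation through $r_{\beta_r(\hat{\mathfrak w}')}=s_{\hat{\mathfrak w}'}s_rs_{\hat{\mathfrak w}'}^{-1}$, which forces you to import Theorem~\ref{thm-mm} for shorter words and to organize a simultaneous induction. Your interleaving is in fact non-circular (Lemma and Theorem at length $m-1$ give the Lemma at length $m$, which gives the Theorem at length $m$), so this is a valid, if heavier, variant; but it is avoidable. If you want the reflection language, the abstract reflection $r_\beta(x)=x-(x,\beta)\beta$ in the real root $\beta=\beta_r(\hat{\mathfrak w}')$ already does everything: it preserves the form because $(\beta,\beta)=2$, and the identity $\beta_q(\hat{\mathfrak w})=-r_\beta\bigl(\beta_q(\hat{\mathfrak w}')\bigr)$ follows from the exchange recurrence plus the inductive hypothesis $(\beta_q(\hat{\mathfrak w}'),\beta_r(\hat{\mathfrak w}'))=c_{qr}(\hat{\mathfrak w}')$. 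This removes the dependence on Theorem~\ref{thm-mm} entirely, which is exactly what the paper's self-contained proof buys.

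One precision issue in your inductive step needs repair. The lemma's statement, hence the inductive hypothesis, only relates the form to $c_{pq}$ for pairs in which one index is the \emph{last} letter of the word. In the paper's case $p\neq r$, the cross term $(\beta_p(\check{\mathfrak w}r),\beta_q(\check{\mathfrak w}r))$ involves two indices neither of which is the last letter $r$, so the hypothesis for $\check{\mathfrak w}r$ does not apply to it; your phrase ``a form between roots appearing one step earlier, to which the inductive hypothesis applies'' glosses over this. The fix is the paper's two-step descent: since $\beta_p$ and $\beta_q$ are unchanged by the mutation at $r$, this term equals $(\beta_p(\check{\mathfrak w}),\beta_q(\check{\mathfrak w}))$, and $\check{\mathfrak w}$ necessarily ends in $p$ or $q$, so the hypothesis applies there and yields $c_{pq}(\check{\mathfrak w})$ --- which is what makes the result match $c_{pq}(\check{\mathfrak w}r)=-c_{pq}(\check{\mathfrak w})+c_{pr}(\check{\mathfrak w})c_{qr}(\check{\mathfrak w})$ from the mutation rule. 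Also keep the case split explicit: for $p=r$ the computation instead uses $(\beta_p,\beta_p)=2$ to get $-c_{pq}+2c_{pq}=c_{pq}$, which is a different mechanism from the cross-term case.
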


\begin{proof}
We use induction. If $\hat{\mathfrak w} = \tilde{\mathfrak u}kq$, the assertion follows from \eqref{eqn-kp} and \eqref{eqn-kp-1}. Now assume $\hat{\mathfrak w}= \check{\mathfrak w}rq$ for some $r \neq q$. Then we have
\[  (\beta_p(\hat{\mathfrak w}), \beta_q(\hat{\mathfrak w})) = (\beta_p(\check{\mathfrak w}r), -\beta_q(\check{\mathfrak w}r)+c_{qr}(\check{\mathfrak w}r)\beta_r(\check{\mathfrak w} r)).\]

If $p=r$, then we have by induction
\[  (\beta_p(\hat{\mathfrak w}), \beta_q(\hat{\mathfrak w})) = -c_{pq}(\check{\mathfrak w} r)+2 c_{pq} (\check{\mathfrak w} r) = c_{pq}(\check{\mathfrak w} r) = c_{pq}(\hat{\mathfrak w} ),\]
and we are done.

If $p \neq r$, then we have
\[ c_{pq}(\hat{\mathfrak w}) = c_{pq}(\check{\mathfrak w} r) = -c_{pq}(\check{\mathfrak w}) + c_{pr}(\check{\mathfrak w})c_{qr}(\check{\mathfrak w}), \]
and obtain by induction
\begin{align*}
(\beta_p(\hat{\mathfrak w}), \beta_q(\hat{\mathfrak w})) &= - (\beta_p(\check{\mathfrak w}r), \beta_q(\check{\mathfrak w}r)) +  c_{qr}(\check{\mathfrak w}r)(\beta_p(\check{\mathfrak w}r), \beta_r(\check{\mathfrak w} r)) \\
&=- (\beta_p(\check{\mathfrak w}), \beta_q(\check{\mathfrak w})) +  c_{qr}(\check{\mathfrak w})c_{pr}(\check{\mathfrak w}r) \\ &= -c_{pq}(\check{\mathfrak w}) + c_{qr}(\check{\mathfrak w})c_{pr}(\check{\mathfrak w}).
\end{align*}
This proves the desired identity.
\end{proof}

\begin{Cor}
Assume that $\hat{\mathfrak w} = \tilde{\mathfrak u} k \mathfrak u_1 k \mathfrak u_2 \dots k \mathfrak u_\ell \in \mathfrak W$
where $\mathfrak u_t = (ij)^{n_t}, (ji)^{n_t}, (ij)^{n_t}i$ or $(ji)^{n_t}j$ for some ${n_t} \ge 0$ for $t=1,2, \dots, \ell$. Suppose that $\hat{\mathfrak w}$ does not end with $p$ for $p=i,j$ or $k$.
Then we have
\begin{equation} \beta_p (\hat{\mathfrak w}p) = s_{\mathfrak w} \phi(\tilde{\mathfrak v} k \mathfrak u_1 k \mathfrak u_2 \dots k \mathfrak u_\ell p) = \psi ( \hat{\mathfrak w}p ).\end{equation}
\end{Cor}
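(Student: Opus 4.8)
The plan is to prove the final corollary by the same mutation-tracking strategy that powered Cases 1 through 6, now iterated along the decomposition $\hat{\mathfrak w} = \tilde{\mathfrak u} k \mathfrak u_1 k \mathfrak u_2 \dots k \mathfrak u_\ell$. The essential mechanism is already in place: Lemma \ref{lem-fin} establishes the branching inequalities that tell us, at each mutation step, which reflection governs the evolution of the relevant root, and Lemma \ref{lem-bibi} guarantees that the exchange coefficient $c_{pq}(\hat{\mathfrak w})$ equals the bilinear form $(\beta_p(\hat{\mathfrak w}), \beta_q(\hat{\mathfrak w}))$ whenever $\hat{\mathfrak w}$ ends with $q$. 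So the key identity to exploit is that a mutation in direction $p$ acts on the root $\beta_p$ exactly as the reflection $r_{\beta_q}$ where $q$ is the last letter, i.e.
\[ \beta_p(\hat{\mathfrak w}p) = -\beta_p(\hat{\mathfrak w}) + c_{qp}(\hat{\mathfrak w})\beta_q(\hat{\mathfrak w}) = -r_{\beta_q(\hat{\mathfrak w})}\bigl(\beta_p(\hat{\mathfrak w})\bigr). \]

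First I would set up an induction on $\ell$ (the number of $k$-blocks) with an inner induction on the exponents $n_t$, taking Cases 1--6 as the base of the induction. The inductive hypothesis should carry two pieces of data simultaneously: the precise value $\beta_p(\hat{\mathfrak w}) = s_{\mathfrak w}\,\beta(\vec{v_p}(\tilde{\mathfrak v}k\mathfrak u_1 \cdots k\mathfrak u_\ell))$ expressing each tracked root as $s_{\mathfrak w}$ applied to the corresponding root in the vector model, together with the branching inequalities of Lemma \ref{lem-fin} that certify which letter labels the governing reflection. The point is that the factor $s_{\mathfrak w}$ is constant throughout the spiral part and conjugation by it intertwines the abstract Weyl-group computation with the geometric $V(\cdot)$-recursion. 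Thus the heart of the argument reduces, after stripping $s_{\mathfrak w}$, to showing that the reflection bookkeeping on the $\beta$-side matches the vector-concatenation bookkeeping $\upsilon(\vec{v_p}) \beta(\vec{v_q}) = -\beta(\vec{v_q} \text{ after mutation})$ recorded in Lemma \ref{lem-cru}.

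The central computation is then an application of Lemma \ref{lem-cru} in its iterated form. Using $c_{pq}(\hat{\mathfrak w}) = (\beta_p(\hat{\mathfrak w}), \beta_q(\hat{\mathfrak w}))$ from Lemma \ref{lem-bibi}, the mutation formula rewrites as $\beta_p(\hat{\mathfrak w}p) = -r_{\beta_q(\hat{\mathfrak w})}\beta_p(\hat{\mathfrak w})$, and after conjugating out $s_{\mathfrak w}$ this becomes exactly the statement
\[ \beta_p(\hat{\mathfrak w}p) = -s_{\mathfrak w}\,\upsilon(\vec{v_q}(\tilde{\mathfrak v}k\mathfrak u_1 \cdots k\mathfrak u_\ell))\,\beta(\vec{v_p}(\tilde{\mathfrak v}k\mathfrak u_1 \cdots k\mathfrak u_\ell)) = s_{\mathfrak w}\,\beta(\vec{v_p}(\tilde{\mathfrak v}k\mathfrak u_1 \cdots k\mathfrak u_\ell p)), \]
where the last equality is Lemma \ref{lem-cru}. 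Recognizing $s_{\mathfrak w}\,\beta(\vec{v_p}(\cdots p)) = s_{\mathfrak w}\,\phi(\tilde{\mathfrak v}k\mathfrak u_1 \cdots k\mathfrak u_\ell p) = \psi(\hat{\mathfrak w}p)$ by the definitions of $\phi$ and $\psi$ closes the argument.

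The main obstacle I anticipate is the verification that the branching inequalities of Lemma \ref{lem-fin} persist correctly through every step of the nested induction — in particular, that advancing the exponent $n_\ell$ within a fixed block (the inner induction using the matrix powers $J^n$ from Lemma \ref{ppa}) and advancing to a new block $k\mathfrak u_{\ell+1}$ (the outer induction) both preserve the hypothesis that $c_{pq}$ agrees with the bilinear form and that the governing reflection is the predicted one. Each transition requires re-deriving inequalities of the type $c_{ik}(\cdot)\beta_k(\cdot) \ge \gamma\beta_j(\cdot)$ as inputs to Lemma \ref{lem-fin}, and one must confirm that Lemma \ref{lem-bibi} applies at each intermediate word; keeping the case analysis over $\mathfrak u_t \in \{(ij)^{n_t}, (ji)^{n_t}, (ij)^{n_t}i, (ji)^{n_t}j\}$ consistent with the labeling $\{i,j,k\}$ is where the bookkeeping is heaviest, though each individual inequality is of the elementary polynomial type already dispatched in Lemmas \ref{ppa}--\ref{lem-wwab}.
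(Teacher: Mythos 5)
Your proposal is correct and follows essentially the same route as the paper's own proof: the paper likewise combines the mutation formula with Lemma \ref{lem-bibi} to write $\beta_p(\hat{\mathfrak w}p) = -r_{\beta_q(\hat{\mathfrak w})}\beta_p(\hat{\mathfrak w})$, conjugates out $s_{\mathfrak w}$, and applies Lemma \ref{lem-cru} together with the definitions of $\phi$ and $\psi$, with Lemma \ref{lem-fin} certifying the correct branch at each mutation. The inductive bookkeeping you flag as the main obstacle is exactly what Lemmas \ref{lem-fin} and \ref{lem-bibi} have already absorbed, so the paper's corollary proof reduces to your central computation.
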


\begin{proof}
With Lemma \ref{lem-fin} established, the proof is very similar to that of Corollary \ref{cor-abc}. Suppose that $\hat {\mathfrak w}$ ends with $q$.  By Lemma \ref{lem-bibi} and Lemma \ref{lem-cru}, we have
\begin{align*}
\beta_p(\hat{\mathfrak w} p) & = -\beta_p(\hat{\mathfrak w}) + c_{pq}(\hat{\mathfrak w}) \beta_q(\hat{\mathfrak w})=- r_{\beta_q(\hat{\mathfrak w})} \beta_p(\hat{\mathfrak w}) \\ &= - s_{\mathfrak w} \upsilon (\vec{v_q}( \tilde{\mathfrak v} k \mathfrak u_1 k \mathfrak u_2 \dots k \mathfrak u_\ell )) s_{\mathfrak w}^{-1}s_{\mathfrak w} \beta(\vec{v_p}(\tilde{\mathfrak v} k \mathfrak u_1 k \mathfrak u_2 \dots k \mathfrak u_\ell) )  \\ &= - s_{\mathfrak w} \upsilon (\vec{v_q}( \tilde{\mathfrak v} k \mathfrak u_1 k \mathfrak u_2 \dots k \mathfrak u_\ell ))  \beta(\vec{v_p}(\tilde{\mathfrak v} k \mathfrak u_1 k \mathfrak u_2 \dots k \mathfrak u_\ell) ) \\ &=  s_{\mathfrak w}  \beta(\vec{v_p}(\tilde{\mathfrak v} k \mathfrak u_1 k \mathfrak u_2 \dots k \mathfrak u_\ell p) )  \\ &=   s_{\mathfrak w}  \phi (\tilde{\mathfrak v} k \mathfrak u_1 k \mathfrak u_2 \dots k \mathfrak u_\ell p)
=  \psi ( \hat{\mathfrak w}p).
\end{align*} 

\end{proof}

This completes the proof of Theorem \ref{thm-mm}.

\vskip 1 cm

\end{document}